\newtheorem{theorem}{Theorem}[section]
\newtheorem{definition}[theorem]{Definition}
\newtheorem{lemma}[theorem]{Lemma}
\newtheorem{proposition}[theorem]{Proposition}
\newtheorem*{remark}{Remark}
\newcommand{\te}{\textrm}
\def \Br {{\rm{Br\,}}}
\def\ov{\overline}
\def \Z {{\mathbb Z}} 
\def \Q {{\mathbb Q}}
\def \Br {{\rm Br\,}}
\newcommand{\bthe}{\begin{theo}}
\newcommand{\ble}{\begin{lem}}
\newcommand{\bpr}{\begin{prop}}
\newcommand{\bco}{\begin{cor}}
\newcommand{\bde}{\begin{defi}}
\newcommand{\ethe}{\end{theo}}
\newcommand{\ele}{\end{lem}}
\newcommand{\epr}{\end{prop}}
\newcommand{\eco}{\end{cor}}
\newcommand{\ede}{\end{defi}}
\begin{document}

\begin{center}
{\bf \Large Diagonal quartic surfaces 
and transcendental elements of the Brauer group}
\end{center}

\begin{center}
{\bf  Evis Ieronymou}
\end{center}

\begin{abstract}

\noindent We exhibit central simple algebras over the function field of a diagonal quartic surface over the complex numbers that represent the $2$-torsion part of its Brauer group. We investigate whether the $2$-primary part of the Brauer group of  a diagonal quartic surface
 over a number field is algebraic and give sufficient conditions for this to be the case. In the last section we give an obstruction to weak approximation due to a transendental class on a specific diagonal quartic surface, an obstruction which cannot be explained by the algebraic Brauer group which in this case is just the constant algebras.
\end{abstract}
\section{Introduction}

Let $k$ be a number field and $X/k$ a smooth, projective, geometrically integral variety. We denote by $\Br(X)$ the cohomological Brauer group of $X$, that is $H^2_{\te{\'{e}t}}(X,\mathbb G_m)$. The algebraic Brauer group, denoted $\Br_1(X)$, is defined as the kernel of the natural map $\Br(X)\mapsto \Br(\overline X)$, where $\overline k$ is an algebraic closure of $k$ and $\overline X=X\times_k \overline k$. We call a class in $\Br(X)$ algebraic if it belongs to Br$_1(X)$, transcendental otherwise. Algebraic classes have been used for arithmetical purposes, giving information about the rational points of $X$. One can immediately ask (a) Is $\Br_1(X)=\Br(X)$?
(b) If not, can we use transcendental classes of the Brauer group of $X$ to get extra information about the rational points of $X$?

For curves or rational surfaces the answer to (a) is yes for the trivial reason that $\Br(\overline X)=0$. 
This last fact is in direct contrast with $K3$ surfaces where $\textrm{Br}(\overline{X})\cong(\mathbb{Q}/\mathbb{Z})^{\beta_2-\rho}$ is far from trivial. Here $\beta_2$ is the second Betti number which is $22$ for any $K3$ surface, and $\rho$ is the geometric Picard number, that is the rank of $\textrm{Pic}(\overline X)$, which satisfies $1\leq\rho\leq20$. Therefore for $K3$ surfaces transcendental classes might exist, and will certainly exist after an appropriate extension of the ground field. For $K3$ surfaces we also know that $\textrm{Br}(X) / \textrm{Br}_1(X)$ is finite when the ground field is a  number field (proved by Skorobogatov and Zarhin in \cite{SkoroK3}). 

Generally speaking there are techniques to handle algebraic elements of the Brauer group, at least when we have fine enough information about Pic$(\overline X)$. However there are no such techniques
for transcendental elements. This makes them more mysterious and consequently question (b) more difficult to answer. Indeed examples of transcendental elements are more rare in the literature. Arithmetic applications of transcendental elements are exhibited by Harari in \cite{Har}, where the 
variety considered is a conic bundle over the projective plane. For $K3$ surfaces transendental elements of the Brauer group appear in papers of Wittenberg \cite{Witt}, Skorobogatov and Swinnerton-Dyer \cite{Skorokaiswin}, Harari and Skorobogatov \cite{SkHar}. In these cases an elliptic fibration is used. We also use an elliptic fibration, but in our case the fibration has no section which complicates things a little bit.

In this paper we concentrate on the special subclass of $K3$ surfaces consisting of diagonal quartic surfaces. A good reason for doing this is that they are special enough and adequately studied to be amenable to computations. On the other hand they are wide enough to draw general conclusions or even hints of what general conclusions about $K3$ surfaces should be. Relevant to this paper are the papers of Swinnerton-Dyer \cite{SD} and Bright \cite{Br2}, where diagonal quartic surfaces are studied. In particular by \cite{Br2} we have a good control over the algebraic Brauer group. This leaves open the role played by transendental elements and evidence suggests that they might provide extra arithmetic information. It is worth noting here that we cannot apply the results of \cite{bigpap} since the geometric assumptions on the fibration do not hold in our case.

The structure of the paper is as follows:
In section $3$ we let $S/\mathbb C$ be the diagonal quartic surface $x^4-y^4=z^4-w^4$. The main result is that we exhibit central simple algebras over its function field that represent the $2$-torsion part of $\Br(S)$ (theorem \ref{bigthm}). To prove this we utilise an elliptic fibration of $S$ over the projective line, with generic fibre $C$. The curve $C$ is an element of exact order $2$ in the Weil-Ch\^atelet group of its Jacobian. We use the theory of torsors to exhibit elements that span $\Br(C)[2]$ and then use Grothendieck's purity theorem for the Brauer group to see which of these elements belong to $\Br(S)$.

In sections $4$ and $5$ we investigate whether the $2$-primary part of the Brauer group of  a diagonal quartic surface
 over a number field is algebraic. We give sufficient conditions for this to be the case. 
Section $4$ is concerned with the fixed surface $S$ whereas section $5$ is concerned
 with diagonal quartic surfaces with arbitrary coefficients. The main result of each section is presented at the beginning. Finally, in section $6$ we give an example where a transcendental class gives extra information about rational points. 
In particular we show that a transendental class gives an obstruction to weak approximation on $S/\mathbb Q(i,\sqrt[4]2)$. In this case the algebraic Brauer group is simply the constant algebras, and hence gives no information about the rational points.

\section{Notation and preliminaries}
All cohomology groups are \'{e}tale. When $k$ is a field we denote by $\overline k$ a separable closure of $k$. We fix such a closure throughout. For a $\te{Gal}(\overline k/k)$-module $M$ we denote by  $H^i(k,M)$ the group $H^i(\te{Gal}(\overline k/k),M)$. For a group $M$ we denote by $M[n]$ its $n$-torsion subgroup.

Let $X/k$ be a variety and $k\subseteq L$ a field extension. We denote $X_L$ the variety $X\times_k L/L$. When $L=\overline k$ we simply write $\overline X$. We denote by $k(X)$ the function field of $X$, and by $X(L)$ the set of $L$-valued points of $X$, that is $X(L)=\textrm{Hom}_{\te{spec}(k)}(\te{spec}(L),X)$.

Let $k$ be a field of characteristic coprime to $n$ that contains the $n$-th roots of unity. For the definition and basic properties of the symbol $(a,b)_n$ as an element of $\Br(k)$ we refer the reader to \cite[chapter XIV]{Serre}. 

By a result of Grothendieck $\Br(X)$ canonically injects in the Brauer group of its function field, when $X$ is smooth, proper and irreducible (cf. \cite[Theorem 1.3.2]{CT}). We will assume this implicitly throughout. General results on the Brauer group and residue maps can be found in \cite{Groth}.

For the definitions and basic properties  of torsors, groups of multiplicative type, their dual groups, type of a torsor under a group of multiplicative type, $n$-coverings of abelian varieties we refer the reader to \cite{Skoro}. We use the notation used there.

Next we give two theorems, due to Colliot-Th\'el\`ene and Sansuc, which form the conceptual framework of the work we will be doing.

The first theorem gives a link between torsors under groups of multiplicative type and the Brauer group of a variety.

\begin{theorem}\label{type theorem}
Let $X/k$ be a smooth, projective, geometrically integral variety. Let $G$ be a group of multiplicative type, $ \widehat{G}$ its dual group, $Y/X$ a torsor under $G$ and let $ \widehat{G} \xrightarrow{\lambda} \te{Pic}(\overline{X})$ be the type of $Y/X$.
The following diagram is commutative:

\centerline{
\begin{xy}
(40,30)*+{H^1(X,\widehat{G})}="ka"; 
(40,60)*+{H^1(k,\widehat{G})}="pa"; 
(90,30)*+{\Br_1(X)}="kd";
(90,60)*+{H^1(k,\textrm{Pic}(\overline{X}))}="pd"; 
{\ar@{->}^{\cup [Y/X]} "ka";"kd"};
{\ar@{->}^{\lambda_*} "pa";"pd"};
{\ar@{->}^{s^*} "pa";"ka"};
{\ar@{->}_{\psi} "kd";"pd"};
\end{xy}}

\noindent Here $s$ is the structure morphism of $X$ and $\psi$ is the map coming from the Hochschild-Serre spectral sequence.
\end{theorem}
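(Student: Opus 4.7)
The plan is to use the functoriality of the Hochschild-Serre spectral sequence
$$E_2^{p,q}(M) = H^p(k, H^q(\overline X, M)) \Rightarrow H^{p+q}(X, M)$$
for $s\colon X \to \Spec k$, applied to $M = \widehat G$, $M = G$ and $M = \mathbb G_m$, together with the compatibility of cup products with this spectral sequence induced by the duality pairing $G \otimes \widehat G \to \mathbb G_m$. Recall that $\psi$ is the edge map of the sequence for $\mathbb G_m$, taking values in $E_\infty^{1,1} \hookrightarrow E_2^{1,1} = H^1(k, \Pic \overline X)$, and that by construction the type $\lambda \in \Hom_{\Ga}(\widehat G, \Pic \overline X)$ is the image of $[Y/X]$ under the edge map $H^1(X, G) \to E_\infty^{0,1} \hookrightarrow E_2^{0,1} = H^0(k, H^1(\overline X, G))$, via the canonical identification $H^1(\overline X, G) = \Hom(\widehat G, \Pic \overline X)$ for a group of multiplicative type.

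First I would verify that, for $\alpha \in H^1(k, \widehat G)$, the class $s^*\alpha \cup [Y/X]$ actually lies in $\Br_1(X)$. This is because the pullback of $s^*\alpha$ to $\overline X$ factors through $H^1(\overline k, \widehat G) = 0$, so $s^*\alpha \cup [Y/X]$ restricts to $0$ in $H^2(\overline X, \mathbb G_m)$. The content of the theorem then becomes the identity $\psi(s^*\alpha \cup [Y/X]) = \lambda_* \alpha$ in $H^1(k, \Pic \overline X)$.

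To prove this identity I would invoke compatibility of cup products with the filtration $F^\bullet$ coming from the spectral sequence. The class $s^*\alpha$ sits in $F^1 H^1(X, \widehat G) = E_\infty^{1,0}(\widehat G)$, and $[Y/X] \in F^0 H^1(X, G)$ has image $\lambda$ in $E_\infty^{0,1}(G)$. By the compatibility $F^i \cup F^j \subseteq F^{i+j}$, the cup product $s^*\alpha \cup [Y/X]$ lies in $F^1 H^2(X, \mathbb G_m)$, and its image in $E_\infty^{1,1}(\mathbb G_m) \hookrightarrow E_2^{1,1}(\mathbb G_m)$---which is, by definition, $\psi(s^*\alpha \cup [Y/X])$---is computed via the pairing on $E_2$ terms induced by $\widehat G \otimes G \to \mathbb G_m$:
$$H^1(k, \widehat G) \otimes H^0(k, \Hom(\widehat G, \Pic \overline X)) \longrightarrow H^1(k, \Pic \overline X).$$
Unwinding this pairing from naturality of Galois cohomology in the coefficients, it is precisely $(\alpha, \lambda) \mapsto \lambda_* \alpha$.

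The main obstacle is formalizing the compatibility of cup products with the Hochschild-Serre filtration and verifying that the pairing induced on $E_2$-terms is the naive one coming from the coefficient pairing. This is classical but delicate; the cleanest approach is to lift $\cup$ to the level of a Cartan-Eilenberg resolution underlying both spectral sequences and then track the bookkeeping on associated graded pieces. Once this compatibility is established the conclusion follows formally.
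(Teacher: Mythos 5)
The paper does not actually prove this statement: its ``proof'' is a one-line citation of \cite[Theorem 4.1.1]{Skoro} (the result goes back to Colliot-Th\'el\`ene and Sansuc), so there is no in-paper argument to compare yours against. Your outline is essentially the standard proof underlying that reference, and it is sound: the reduction to the identity $\psi(s^*\alpha\cup[Y/X])=\lambda_*\alpha$, the observation that $s^*\alpha\cup[Y/X]$ lands in $\Br_1(X)$ because $H^1(\overline k,\widehat G)=0$, the placement of $s^*\alpha$ in $F^1H^1(X,\widehat{G})=E_\infty^{1,0}$ and of the type as the image of $[Y/X]$ in $E_\infty^{0,1}\subseteq H^0\bigl(k,\Hom(\widehat{G},\Pic(\overline X))\bigr)$, and the identification of the induced $E_2$-pairing with evaluation $(\alpha,\lambda)\mapsto\lambda_*\alpha$ are all correct; it is also worth noting that in bidegree $(1,1)$ there is no incoming differential, so $E_\infty^{1,1}$ sits inside $E_2^{1,1}=H^1(k,\Pic(\overline X))$ and no further identification is needed. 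Two points should be made explicit in a write-up. First, the identification $H^1(\overline X,G)\cong\Hom(\widehat{G},\Pic(\overline X))$ used to define the type requires $\overline k[X]^*=\overline k^*$ (supplied here by properness and geometric integrality), so that for instance the unit contribution to $H^1(\overline X,\mu_n)$ vanishes. Second---and this is the only real debt in your argument---the multiplicativity of the Hochschild--Serre spectral sequence (that $F^i\cup F^j\subseteq F^{i+j}$ and that the induced product on the $E_2$-page is Galois-cohomological cup product with respect to the coefficient pairing $\widehat{G}\otimes G\to\mathbb{G}_m$, up to a sign which is $+1$ for the bidegrees occurring here) is precisely the nontrivial input; you correctly isolate it rather than prove it, and it is classical, but since the whole theorem is a formal consequence of it, supplying a reference or proof of that compatibility is what would turn your sketch into a complete argument.
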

\begin{proof}
\cite[Theorem 4.1.1]{Skoro}.
\end{proof}
The second theorem uses the theory of local description of torsors under groups of multiplicative type in order to give the data needed to apply the theorem above.
\begin{theorem}\label{cyclic constr}

Let $X/k$ be a smooth, projective, geometrically integral variety.
\begin{enumerate}
 \item Let $f\in k(X)^*$ be a function such that $\textrm{div}(f)=nD$. Then there is a torsor $Y/X$ under $\mu_n$ such that
\begin{enumerate}
 \item type$[Y/X]:\mathbb Z/n\Z\rightarrow \textrm{Pic}(\overline X)$ is the map that sends $1$ to the class of $D$.
\item The generic fibre of $[Y/X]$ is represented by the class of $f$ in

\noindent $H^1(k(X),\mu_n)\cong k(X)^*/k(X)^{*^n}$  
\end{enumerate}
\item Let $L/k$ be a finite separable extension. Denote by $R_{L/k}(\mathbb G_m)$ the Weil restriction of scalars of $\mathbb G_m$.

We have the exact sequence of algebraic groups
\begin{eqnarray}
1\rightarrow R^1_{L/k}\mathbb G_m\rightarrow R_{L/k}\mathbb G_m\xrightarrow{N_{L/k}}
\mathbb G_m\rightarrow1
\end{eqnarray}
where $R^1_{L/k}\mathbb G_m$ is defined by the exact sequence above. Moreover we have the dual exact sequence of Gal$(\overline k/k)$-modules

\begin{eqnarray}
1\rightarrow \mathbb Z\rightarrow \mathbb Z[\mathfrak g/\mathfrak h]\rightarrow \widehat{R^1} \rightarrow1
\end{eqnarray}
\[\te{where} \quad \mathfrak g=\te{Gal}(\overline k/k), \quad \mathfrak h=\te{Gal}(\overline k/L), \quad R^1=R^1_{L/k}\mathbb G_m .
\]
Let $f\in k(X)^*$ be a function such that $\textrm{div}(f)=N_{L/k}(D)$, for some $D$ in $\textrm{Div}(X_L)$. Then there is a torsor $Y/X$ under $R^1$ such that
\begin{enumerate}
 \item type$[Y/X]:\widehat {R^1}\rightarrow \textrm{Pic}(\overline X)$ is the $\mathfrak g$-module homomorphism
 that sends $\overline {1e}$ to the class of $D$. Here $1e$ denotes the element of $\mathbb Z[\mathfrak g/\mathfrak h]$ where $e$ is the unit element of $\mathfrak g/\mathfrak h$, and
 $\overline {1e}$ its image in $\widehat {R^1}$.
\item The generic fibre of $[Y/X]$ is represented by the class of $f$ in $H^1(k(X),R^1)\cong k(X)^*/N(L(X)^*)$  
\end{enumerate}
\end{enumerate}

\end{theorem}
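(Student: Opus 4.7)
The strategy in both parts is the same: realise $[Y/X]$ as a class coming from the connecting map of an appropriate short exact sequence of sheaves on $X_{\te{\'et}}$, identify its type via functoriality on $\overline X$, and compute its generic fibre by pulling back to $\Spec(k(X))$, where the Picard group vanishes.

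For part (1), I would start from the Kummer sequence $1\to\mu_n\to\mathbb G_m\xrightarrow{n}\mathbb G_m\to 1$ on $X$. Its cohomology long exact sequence realises $H^1(X,\mu_n)$ as classes of pairs $(\mathcal L,\phi)$ consisting of a line bundle $\mathcal L$ and a trivialisation $\phi\colon\mathcal L^{\otimes n}\xrightarrow{\sim}\mathcal O_X$, modulo scaling $\phi$ by an $n$-th power in $k^*$. The hypothesis $\div(f)=nD$ says exactly that $f$ trivialises $\mathcal O(D)^{\otimes n}$, so the pair $(\mathcal O(D),f)$ defines the desired class $[Y/X]$. Naturality of the Kummer sequence under pullback to $\overline X$ shows that the induced map $H^1(\overline X,\mu_n)\to\Pic(\overline X)[n]$ sends $[Y/X]$ to $[D]$; identifying $\widehat{\mu_n}$ with $\mathbb Z/n$ via the tautological character then yields the asserted type. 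For the generic fibre, pull back further to $\Spec(k(X))$: since $\Pic$ of a field is trivial, the Kummer sequence collapses to $H^1(k(X),\mu_n)\cong k(X)^*/(k(X)^*)^n$ and the restriction of $[Y/X]$ is represented by $f$.

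For part (2), the analogous starting point is the norm sequence $1\to R^1\to R_{L/k}\mathbb G_m\xrightarrow{N_{L/k}}\mathbb G_m\to 1$. Shapiro's lemma identifies $H^i(X,R_{L/k}\mathbb G_m)$ with $H^i(X_L,\mathbb G_m)$, so its cohomology on $X$ reads
\[
k^*\to H^1(X,R^1)\to\Pic(X_L)\xrightarrow{N_{L/k}}\Pic(X).
\]
A class in $H^1(X,R^1)$ is therefore represented by a pair $(\mathcal M,\psi)$ consisting of a line bundle $\mathcal M$ on $X_L$ together with a trivialisation $\psi$ of $N_{L/k}\mathcal M$, and the hypothesis $\div(f)=N_{L/k}(D)$ lets us take $\mathcal M=\mathcal O(D)$ with $\psi$ coming from $f$, defining $[Y/X]$. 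For the generic fibre, restrict to $\Spec(k(X))$; since $\Pic(L(X))=0$ the same sequence reduces to $L(X)^*\xrightarrow{N}k(X)^*\to H^1(k(X),R^1)\to 0$, and by construction the restriction of $[Y/X]$ is the class of $f$.

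The principal obstacle is the type computation in part (2). I would handle it by pushing $[Y/X]$ forward along $R^1\hookrightarrow R_{L/k}\mathbb G_m$ to obtain an auxiliary torsor $\widetilde Y/X$ which, by Shapiro, corresponds to $[\mathcal O(D)]\in\Pic(X_L)$. Its type is a map $\mathbb Z[\mathfrak g/\mathfrak h]\to\Pic(\overline X)$; unwinding the decomposition $\overline X_L=\bigsqcup_{g\in\mathfrak g/\mathfrak h}\overline X$ together with the Shapiro isomorphism, one checks that this map sends $1e$ to $[D]$. By functoriality of the type under the surjection $\mathbb Z[\mathfrak g/\mathfrak h]\twoheadrightarrow\widehat{R^1}$, the type of $[Y/X]$ then sends $\overline{1e}$ to $[D]$. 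The remaining verifications are diagram chases using Hochschild--Serre and naturality of the connecting maps.
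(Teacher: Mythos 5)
Your argument is correct; the paper offers no proof beyond citing the local description of torsors in Colliot-Th\'el\`ene--Sansuc (\emph{La descente, II}, example 2.4.2), and what you have written is precisely an unwinding of that description: the Kummer sequence with the pairs $(\mathcal L,\phi)$ for part (1), and the norm sequence together with Shapiro's lemma for part (2). The one delicate point, the type in part (2), is handled correctly by your appeal to functoriality of the type along the dual surjection $\mathbb Z[\mathfrak g/\mathfrak h]\twoheadrightarrow\widehat{R^1}$ applied to the pushforward torsor under $R_{L/k}\mathbb G_m$.
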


\begin{proof}

 This follows easily from \cite[example $2.4.2$]{CTladescente}.
\end{proof}

\section{Brauer group over $\mathbb C$}

All diagonal quartic surfaces are isomorphic over $\mathbb C$. Let $S/\mathbb C$ be the surface $x^4-y^4=z^4-w^4$. 
The aim of this section is to prove the following:

\noindent {\bf Theorem. }
{\it Let
\[t  =  \frac{x^2-y^2}{z^2-w^2} \  , \   u =  \frac{x-y}{z-w} \  , \  v =\frac{(t^3-u^2)(z+w)}{x-y}
\]
\[G   =    u^2-t^3 \ , \ F   =    v-i(t^2-1)u
\]
where $i=\sqrt{-1}$.
The following are the three non-zero elements of $\Br(S)[2]$:
\[(FG,t+1)_2+(F,t+i)_2 \quad , \quad (t,F^2G)_4+(FG,t+1)_2\quad , \quad  (t,F^2G)_4+(F,t+i)_2.
\] 
}
\qed

\noindent {\bf Remark.} This is theorem \ref{bigthm} at the end of this section. For the fact that there are exactly three non-zero elements in $\Br(S)[2]$ see the proof of theorem \ref{bigthm}. Moreover by invoking the smooth and proper base change theorems for \'etale cohomology the theorem holds as it stands for any algebraically closed field of characteristic different
from $2$. We will not use this fact in the sequel.

Let $\phi$ be the morphism $S\rightarrow\mathbb{P}^1$ given by $t=(x^2-y^2)/(z^2-w^2)$, where $t$ is the parameter of $\mathbb{P}^1$.
This morphism is well-known and can be found in eg \cite{Shaf}. It is an elliptic fibration with $6$ degenerate fibres, each of type $I_4$, consisting of four straight lines arranged in a skew quadrilateral. The component of each fibre has multiplicity $1$.

Let $C'$ denote the generic fibre of $\phi$. It is a smooth curve of genus $1$ over $\mathbb{C}(t)$, given as the intersection of two quadrics in $\mathbb{P}^3$.
\[C':x^2-y^2-t(z^2-w^2)=t(x^2+y^2)-(z^2+w^2)=0\]
The curve $C'$ has a rational divisor of degree $2$ which allows us to 
represent it as a double covering of $\mathbb{P}^1$, ramified at $4$ points. In more detail let $C$ be given by the equation
\[C:v^2=(u^2-t^3)(tu^2-1)\]
 It is easily checked that the map $u=f_1, \quad  v=\frac{(t^3-f_1^2)f_2}{t}$, where 
$f_1=\frac{x-y}{z-w}$ and $f_2=\frac{x+y}{z-w}$ realises $C'$ as the smooth projective model of $C$. In the sequel we do not distinguish $C$
 from $C'$.

For the rest of this section we fix the notation $K=\mathbb C(t)$.

\noindent The curve $C$ is in a standard form to equip it with the structure of a $2$-covering of its Jacobian. We have:
\begin{lemma}\label{lklk}
Let E be the elliptic curve  
\[E:y^2=x(x+1)(x+c^2)\]
 where $c=\frac{t^2-1}{t^2+1}$. 
Let $C\xrightarrow{\tau}E$ be given by
\[x=\frac{u^2}{v^2}(t^2-1)^2 \ , \quad y=\frac{t(t^2-1)^2}{(t^2+1)}\frac{u(u^4-t^2)}{v^3}. \]
Identifying $E$ with $\textrm{Jac}(C)$,

$\tau$ is the map that sends 
\[P\mapsto2P-\infty_1-\infty_2\in \te{Pic}^0(C)\]
 where
$\infty_1=(1:-1:\sqrt{t}:\sqrt{t})$ and $\infty_2=(1:-1:-\sqrt{t}:-\sqrt{t})$. 

$(C,\tau)$ is a $2$-covering of $E$.

\end{lemma}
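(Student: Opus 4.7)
The plan is to recognize $\tau$ as an instance of the intrinsic construction $P \mapsto [2P-\infty_1-\infty_2]$; this will yield both assertions of the lemma simultaneously.

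I begin by verifying directly that $\tau$ lands in $E$. From $v^2 = (u^2-t^3)(tu^2-1) = tu^4-(1+t^4)u^2+t^3$, elementary manipulation gives
\[ x+1 \;=\; \frac{u^2(t^2-1)^2+v^2}{v^2} \;=\; \frac{t(u^2-t)^2}{v^2}, \qquad x+c^2 \;=\; \frac{t(t^2-1)^2(u^2+t)^2}{v^2(t^2+1)^2}, \]
so that $x(x+1)(x+c^2)$ is the square of $y = t(t^2-1)^2 u(u^4-t^2)/[(t^2+1)v^3]$.

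Next I would record two divisor-theoretic facts on $C$. The points $\infty_1, \infty_2$ are Galois conjugate over $K$ via $\sqrt t \mapsto -\sqrt t$, so $\infty_1+\infty_2$ is a $K$-rational divisor on $C$. For each of the four Weierstrass points $W = (\alpha,0)$ (with $\alpha^2 \in \{t^3, t^{-1}\}$), the function $u-\alpha$ on $C$ has divisor $2W - \infty_1-\infty_2$: the poles come from $\te{ord}_{\infty_i}(u) = -1$, while Taylor-expanding $v^2 = f(u)$ at $u=\alpha$ with $v$ as uniformizer yields $u-\alpha = v^2/f'(\alpha) + O(v^4)$, hence $\te{ord}_W(u-\alpha) = 2$. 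In particular $2W \sim \infty_1+\infty_2$. Consequently the map $\rho: C \to \te{Jac}(C)$, $P \mapsto [2P-\infty_1-\infty_2]$, is a $K$-morphism, and over $\bar K$ factors as $\rho = [2] \circ \phi_W$, where $\phi_W: P \mapsto [P-W]$ is an Abel--Jacobi isomorphism $C_{\bar K} \xrightarrow{\sim} \te{Jac}(C)_{\bar K}$. Thus $(C,\rho)$ is a $2$-covering of $\te{Jac}(C)$.

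The remaining task is to identify $E$ with $\te{Jac}(C)$ so that $\tau$ becomes $\rho$. From the first step, each Weierstrass point $W_i$ maps to $O_E$ under $\tau$ (since $v=0$ forces $x=y=\infty$); a generic-fiber analysis (the fiber equations reduce to a quadratic in $u^2$, with $v$ then fixed by the value of $y$) shows $\deg \tau = 4$, so $\tau^{-1}(O_E) = \{W_1,\ldots,W_4\}$ exactly. The induced map $\tau_* : \te{Jac}(C) \to E$ is therefore a degree-$4$ isogeny of elliptic curves whose kernel contains the four distinct $2$-torsion elements $\phi_W(W_i)$, hence $\ker(\tau_*) = \te{Jac}(C)[2]$ by cardinality. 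This forces a factorization $\tau_* = \bar\alpha \circ [2]$ with $\bar\alpha: \te{Jac}(C) \xrightarrow{\sim} E$ a $K$-isomorphism, whence $\tau = \tau_* \circ \phi_W = \bar\alpha \circ \rho$. Using $\bar\alpha$ as the identification $\te{Jac}(C) = E$, the first assertion reads exactly as stated, and the second follows from the factorization $\tau = [2] \circ (\bar\alpha \circ \phi_W)$ with $\bar\alpha \circ \phi_W$ an isomorphism $C_{\bar K} \xrightarrow{\sim} E_{\bar K}$.

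I expect the main obstacle to be this final identification: while each individual check (degree, kernel cardinality, factorization) is routine, the crucial input is that the four preimages $\{W_1,\ldots,W_4\}$ map under $\phi_W$ to all of $\te{Jac}(C)[2]$, which is what forces $\tau_*$ to be equivalent to $[2]$ rather than a cyclic isogeny of degree~$4$.
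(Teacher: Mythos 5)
Your argument is correct, and it is genuinely more than what the paper offers: the paper's proof of this lemma is a one-line appeal to ``straightforward calculations following well-known formulae'' in the An--Kim--Marshall--Marshall--McCallum--Perlis paper on Jacobians of genus one curves, i.e.\ it verifies the explicit formulas against a stock recipe for the Jacobian of $v^2=(\te{quartic in }u)$ and its $2$-covering map. You instead reconstruct the statement intrinsically: the identity $u^2(t^2-1)^2+v^2=t(u^2-t)^2$ (and its analogue for $x+c^2$) to check $\tau$ lands on $E$, the divisor computation $\te{div}(u-\alpha)=2W-\infty_1-\infty_2$ to see that $P\mapsto[2P-\infty_1-\infty_2]$ is $[2]\circ\phi_W$ over $\overline K$, and then the degree-$4$/kernel-equals-$\te{Jac}(C)[2]$ argument to force $\tau$ to differ from this map by an isomorphism $\bar\alpha$. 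This buys a self-contained, conceptually transparent proof at the cost of a few routine facts you should make explicit: that the quartic $(u^2-t^3)(tu^2-1)$ has four distinct roots (true since $t$ is transcendental over $\C$, so $t^4\neq 1$), so the $W_i$ are four distinct points and the classes $[W_i-W]$ exhaust $\te{Jac}(C)[2]$; that a degree-$4$ morphism of smooth genus-one curves carrying $W$ to $O_E$ is a homomorphism after the identification $\phi_W$ (rigidity); and that $\bar\alpha$ is defined over $K$, which follows since both $\tau$ and $\rho:P\mapsto[2P-\infty_1-\infty_2]$ are $K$-morphisms and $\rho$ is dominant. With those remarks added, your proof fully substitutes for the paper's citation.
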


\begin{proof}
Straightforward calculations following well-known formulae (see eg. \cite{Perlis}).

\end{proof}
\begin{remark} We have that $E(K)=E[4]$. The points $(c,c^2+c)$ and $(d-1,id(d-1))$, where $d=\frac{2t}{t^2+1}$, are two generators of this group (see \cite[pg. 586]{Shaf}, or \cite{SD}).
 \end{remark}

We will now study this $2$-covering more closely. We fix the following notation
\[e_1=(0,0) \quad , \quad e_2=(-c^2,0) \in E[2]\]
and we identify $E[2]$ with $\mu_2^2$ throughout via the isomorphism
\begin{eqnarray}\label{ttof}
 \mu_2^2&\xrightarrow{\beta}&E[2]\\
\beta((-1,1))=e_2 &\quad , \quad &\beta((1,-1))=e_1. \nonumber
\end{eqnarray}
Note that in the sequel we will freely identify $\mathbb{Z}/2\Z$ with $\mu_2$. Using these identifications we identify the groups $H^1(K,E[2]), H^1(K,(\mathbb{Z}/2\Z)^2),H^1(K,\mu_2^2)$. Moreover we identify the latter group with $(K^*/K^{*^2})^2$ via the map coming from the Kummer sequence. We will use all these identifications throughout without explicitly mentioning them every time (everything should be clear from the context). For example, in the next lemma  $(1,t)$ is a priori an element of $(K^*/K^{*^2})^2$ but we consider it as an element of $H^1(K,E[2])$ via the identifications above.

\begin{lemma}\label{2c}
 The class of the $2$-covering $(C,\tau)$ in $H^1(K,E[2])$ is $(1,t)$.
\end{lemma}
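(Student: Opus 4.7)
The plan is to use the classical $2$-descent. Under the paper's identification $\beta$, the Weil pairing realises the first projection $(K^*/K^{*2})^2\to K^*/K^{*2}$ as the character $\pi_{e_1}:E[2]\to\mu_2$ whose kernel is $\langle e_1\rangle$, and the second projection as $\pi_{e_2}$ with kernel $\langle e_2\rangle$. For the $2$-covering $(C,\tau)$ with class $\xi\in H^1(K,E[2])$, the restriction $\xi_{K(C)}$ equals $\delta(\tau(\eta_C))$, where $\eta_C\in C(K(C))$ is the generic point; this is because translating $\tau$ by $-\tau(\eta_C)$ over $K(C)$ produces a trivialisation. The components of $\delta(\tau(\eta_C))$ with respect to $\pi_{e_1},\pi_{e_2}$ are the standard $\phi$-descent invariants $(x-x(e_i))\circ\tau$. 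Injectivity of $K^*/K^{*2}\hookrightarrow K(C)^*/K(C)^{*2}$, which holds since $C/K$ is geometrically integral (an element of $K^*$ that is a square in $K(C)$ is algebraic over $K$, hence lies in $K$), then allows us to read off the class in $(K^*/K^{*2})^2$ from these pull-backs.

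The computation itself is short. From Lemma~\ref{lklk}, one has immediately
\[
x\circ\tau=\left(\frac{u(t^2-1)}{v}\right)^2\equiv 1\pmod{K(C)^{*2}},
\]
so the first coordinate is $1$. For the second, $c=(t^2-1)/(t^2+1)$ together with $v^2=(u^2-t^3)(tu^2-1)$ gives (by routine expansion) the identity $u^2(t^2+1)^2+v^2=t(u^2+t)^2$, whence
\[
(x+c^2)\circ\tau=\frac{(t^2-1)^2\bigl(u^2(t^2+1)^2+v^2\bigr)}{v^2(t^2+1)^2}=t\left(\frac{(t^2-1)(u^2+t)}{v(t^2+1)}\right)^2\equiv t\pmod{K(C)^{*2}},
\]
so the second coordinate is $t$. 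Combining these, the class of $(C,\tau)$ is $(1,t)$.

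The main obstacle is not the calculation but the bookkeeping around $\beta$: one must verify carefully that the chosen ordering $\beta(-1,1)=e_2$, $\beta(1,-1)=e_1$ really makes the first coordinate of $(K^*/K^{*2})^2$ correspond (through the Weil pairing and self-duality of $E[2]$) to the $\phi$-descent invariant associated with $e_1$, so that the pull-back of $x=x-x(e_1)$ lands in the first slot and the pull-back of $x+c^2=x-x(e_2)$ in the second. Once this identification is in place, the answer $(1,t)$ falls out.
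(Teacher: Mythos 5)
Your proof is correct, but it follows a genuinely different route from the paper. The paper trivialises the $2$-covering over the quadratic extension $M=K(\sqrt{t})$, pins down an explicit cocycle for $\mathrm{Gal}(M/K)$ by showing $\sigma_g=\infty_1-\infty_2=\tau(\infty_1)=e_1$, and then inflates; a by-product is the relation ${}^gP=\sigma_g\cdot P$, which the paper reuses later to establish the divisor-class identities in the proof of Theorem \ref{the types}. You instead restrict to $K(C)$, use the standard fact that the class of a $2$-covering with a rational point $Q$ is $\delta(\tau(Q))$ (applied to the tautological point), evaluate the descent functions $x$ and $x+c^2$ at $\tau(\eta_C)$, and descend via the injectivity of $K^*/K^{*2}\hookrightarrow K(C)^*/K(C)^{*2}$. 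All the ingredients check out: the identity $u^2(t^2+1)^2+v^2=t(u^2+t)^2$ follows from $v^2=tu^4-(1+t^4)u^2+t^3$, the injectivity follows from $K$ being algebraically closed in $K(C)$, and your matching of the projections with the characters $\pi_{e_1},\pi_{e_2}$ agrees with the formula $P\mapsto(x(P),x(P)+c^2)$ that the paper itself uses in Proposition \ref{kernel}. Your method is the more mechanical one (no choice of trivialisation, explicit square roots witnessing each coordinate) and is arguably easier to verify; the paper's method produces cocycle-level data that it needs again downstream, which your argument does not supply.
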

\begin{proof}

Let $M=K(\sqrt{t})$ and let $g$ be the generator of $\textrm{Gal}(M/K)$. As a $2$-covering $(C,\tau)$ is trivialised over $M$ because the fibre at the identity contains a point defined over $M$. Therefore $(C,\tau)$ is the image under the inflation map of a cocycle class, say $a$, from
\[
 H^1(\textrm{Gal}(M/K),E[2]).
\]
Let us find a cocycle in this class. Fix an isomorphism $C\xrightarrow{l}E$
 of $2$-coverings, defined over $M$. This choice fixes a cocycle, call it $\sigma$,  in the class $a$. In more detail, denote 
by $\sigma_g\in E[2]$ the image of $g$ under $\sigma$. Then translation by $\sigma_g$ on $E$ is 
determined by the property that
$$
\sigma_g\cdot {}^gT=l({}^gl^{-1}(T)), \quad \forall \quad T\in E(M).
$$

\noindent Choose  $P\in C(M)$  such that $\tau(P)\in E[2]$ (this is clearly possible eg.  $P=\infty_2$). We have that $l(P)\in E[4]$ and so
\[
l(^gP)=\sigma_g\cdot {}^gl(P)=\sigma_g \cdot l(P)=l(\sigma_g \cdot P)
 \] 
The first equality holds from the explicit description of $\sigma_g$ (or if you prefer because  $C$ can be obtained by Galois descent on $E$- twisting $E$ by $\sigma$). The second equality holds because $l(P)$ is defined over $K$. The third
equality holds  because $l$ is an isomorphism of $2$-coverings, and so a fortiori it is an isomorphism of $M$-torsors under $E$ as well. We have thus shown that 
\begin{eqnarray}\label{bfdb}
^gP=\sigma_g\cdot P
\end{eqnarray}
Taking $P=\infty_2$, we see that $\sigma_g=\infty_1-\infty_2\in \textrm{Jac}(C)$. By the definition of $\tau$ the latter is equal to $\tau(\infty_1)$, which by direct computation is equal to $e_1$. The image of this cocycle under the inflation map is precisely $(1,t)$. 

\end{proof}

We denote by $\lambda_2$ the natural injection $E[2]\rightarrow \textrm{Pic}(\overline C)$.

\begin{proposition}\label{kernel}
Let $J$ be the kernel of the natural map 
\[H^1(K,E[2]) \xrightarrow{\lambda_{2_*}} H^1(K,\te{Pic}(\overline{C}))\]
$J$ is isomorphic to $(\mathbb{Z}/2\Z)^3$. If we identify $E[2]$ with $\mu_2^2$ via
$\beta$, the elements 
\[(t^2+1,1),\, (t^2-1,t^2-1),\, (1,t) \in H^1(K,\mu_2^2)\]
are generators of $J$.
\end{proposition}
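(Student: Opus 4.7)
My plan is to factor $\lambda_2$ through the chain $E[2] \hookrightarrow E(\overline K) = \Pic^0(\overline C) \hookrightarrow \Pic(\overline C)$ and compute the kernel in two steps. First I would invoke the Kummer sequence $0 \to E[2] \to E \xrightarrow{\cdot 2} E \to 0$ to obtain
\[
0 \to E(K)/2E(K) \to H^1(K, E[2]) \xrightarrow{\alpha} H^1(K, E)[2] \to 0,
\]
and then analyse the map $H^1(K, E) \to H^1(K, \Pic(\overline C))$ via the degree sequence $0 \to \Pic^0(\overline C) \to \Pic(\overline C) \to \Z \to 0$. Since $H^1(K, \Z) = 0$, the kernel of this second map is generated by the connecting image of $1 \in \Z$, which is exactly the class $[\Pic^1(\overline C)] \in H^1(K, E)$. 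Because the $K$-morphism $C \to \Pic^1(\overline C)$, $P \mapsto [P]$, is an isomorphism of $E$-torsors, this class coincides with $[C]$, which is non-trivial of order $2$ as noted in the introduction.

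Combining, $J = \alpha^{-1}(\langle [C] \rangle)$ fits into
\[
0 \to E(K)/2E(K) \to J \to \langle [C] \rangle \to 0.
\]
The remark following Lemma \ref{lklk} gives $E(K) = E[4] \cong (\Z/4\Z)^2$, so $E(K)/2E(K) \cong (\Z/2\Z)^2$. Therefore $|J| = 8$, and since $J$ sits inside the $2$-torsion group $H^1(K, E[2])$, we conclude $J \cong (\Z/2\Z)^3$.

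To produce the stated generators, observe first that $(1, t)$ equals $[(C, \tau)]$ by Lemma \ref{2c} and maps to $[C]$ under $\alpha$, so it lifts the generator of $\langle [C] \rangle$. For the remaining two generators, which must come from $E(K)/2E(K)$, I would apply the explicit formula for the Kummer map: under the identification $\beta$, the two projections of $H^1(K, E[2]) \cong (K^*/K^{*2})^2$ correspond to the characters of $E[2]$ trivial on $e_1 = (0,0)$ and $e_2 = (-c^2, 0)$ respectively, so the image of a non-$2$-torsion $P \in E(K)$ is $(x(P), \, x(P) + c^2)$ modulo squares. Evaluating this on the $E[4]$-generators $(c, c^2+c)$ and $(d-1, id(d-1))$ from the remark after Lemma \ref{lklk} and simplifying in $\mathbb C(t)^*/\mathbb C(t)^{*2}$ produces two explicit elements which, together with $(1, t)$, span the same $\mathbb F_2$-subspace as the three elements listed. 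The main obstacle is tracking the identifications carefully — in particular verifying that the two factors of $\mu_2^2$ under $\beta$ pair with $e_1$ and $e_2$ in the right way in the descent formula — but once this is fixed, the rest is a routine mod-squares computation.
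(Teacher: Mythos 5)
Your proposal is correct and follows essentially the same route as the paper: the Kummer sequence $0\to E(K)/2E(K)\to H^1(K,E[2])\to H^1(K,E)[2]\to 0$, the degree sequence $0\to\Pic^0(\overline C)\to\Pic(\overline C)\to\Z\to 0$ showing $\ker\bigl(H^1(K,E)\to H^1(K,\Pic(\overline C))\bigr)=\langle[C]\rangle$, the identification $[C]=(1,t)$, and the explicit descent map $P\mapsto(x(P),x(P)+c^2)$ evaluated on the generators of $E(K)=E[4]$. The only caveat is that the nontriviality of $[C]$ in $H^1(K,E)$, which you cite from the introduction, is in fact established inside this very proof in the paper by checking $(1,t)\notin\mathrm{im}\bigl(E(K)/2E(K)\bigr)$ --- a check your own computation of that image already supplies.
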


\begin{proof}
We have the following well-known exact sequence:
\[0\rightarrow E(K)/2E(K)\xrightarrow{\partial} H^1(K,E[2]) \xrightarrow{f} H^1(K,E)[2]\rightarrow
0
\]
and the following description for $\partial$ with respect to the identification
via $\beta$ (see eg. \cite[chapter X]{Silverman}):.
\[P \mapsto ( x(P),x(P)+c^2 )\in H^1(K,\mu_2^2)   \]
By considering two generators of $E(K)=E[4]$ (cf. remark below lemma \ref{lklk}) we see that 
\[\textrm{Ker}(f)=\langle  (t^2+1,t), \, \,(t^4-1,t^2-1)\rangle \]
Note that $(1,t)\notin \textrm{ker}(f)$ and hence $C$ has exact order $2$ in $H^1(K,E)$, in other words $C$ has no section.

Let us now calculate the kernel of the natural map $H^1(K,E) \xrightarrow{\rho}H^1(K,\textrm{Pic}(\overline{C}))$.

\noindent The short exact sequence of $\textrm{Gal}(\overline{K}/K)$-modules

\[0\rightarrow E(\overline{K}) \rightarrow \textrm{Pic}(\overline{C}) \rightarrow \mathbb{Z}
\rightarrow 0
\]
gives us the following exact sequence
%

\begin{align}\label{cg1}
\mathbb{Z}&   \rightarrow   &       H^1(K,E) &
      \xrightarrow{\rho}      &   H^1(K,\textrm{Pic}(\overline{C}))   &\rightarrow&1  \notag  \\
1     &  \mapsto    &  [C]             &           &        && 
\end{align}

\noindent In particular $\textrm{ker}(\rho)=[C]$.

Since $[C]=(1,t)\in H^1(K,E[2])$ and $\lambda_{2_*} = \rho  \circ f $ the lemma is proved.

\end{proof}

\begin{theorem}\label{the types}

Let 
\[G=u^2-t^3, \quad F=v-i(t^2-1)u \, \in K(C)\]
There exist torsors, $T_G/C$ and $T_F/C$ under $\mu_2$, such that
\begin{enumerate}
 \item Their generic fibres are represented
in $H^1(K(C),\mu_2)$
by the classes of $G$ and $F$ respectively
\item {\rm type}$(T_G)=(0,0)$ and {\rm type}$(T_F)=(-c^2,0)$. 
Here the type of a torsor under $\mu_2$ is identified 
with the image of $1\in\mathbb Z/2\Z$
under the type map $\mathbb{Z}/2\Z\to \textrm{Pic}^0(\overline{C})=E(\overline{K})$.
\end{enumerate}

\end{theorem}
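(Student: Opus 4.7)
The plan is to apply Theorem \ref{cyclic constr}(1) to the functions $G$ and $F$: this simultaneously produces the torsors $T_G,T_F/C$ under $\mu_2$ with the prescribed generic fibres and describes their types as divisor classes $[D_G],[D_F]\in\Pic^0(\overline C)$ with $2D_G=\textrm{div}(G)$ and $2D_F=\textrm{div}(F)$. The real work is then to locate these classes inside $E[2]\subset E(\overline K)=\Pic^0(\overline C)$ under the canonical identification provided by Lemma \ref{lklk}.

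First I would compute the divisors. On $\overline C$ the function $u^2-t^3$ has double zeros at the hyperelliptic ramification points $Q_1=(t^{3/2},0)$ and $Q_2=(-t^{3/2},0)$ and double poles at $\infty_1,\infty_2$, giving $D_G=Q_1+Q_2-\infty_1-\infty_2$. For $F=v-i(t^2-1)u$, the algebraic identity
\[
(v-i(t^2-1)u)(v+i(t^2-1)u)=v^2+(t^2-1)^2u^2=t(u^2-t)^2,
\]
which follows from $v^2=(u^2-t^3)(tu^2-1)$ by direct expansion, pins down the zeros of $F$ to lie among the four points where $u^2=t$; since the companion factor $v+i(t^2-1)u$ does not vanish at $T_1=(\sqrt t,\,i(t^2-1)\sqrt t)$ or $T_2=(-\sqrt t,\,-i(t^2-1)\sqrt t)$, the identity forces $F$ to have double zeros at $T_1,T_2$ and double poles at $\infty_1,\infty_2$, so $D_F=T_1+T_2-\infty_1-\infty_2$.

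To identify $[D_G]$ in $E[2]$ I would evaluate $\tau(P)=[2P-\infty_1-\infty_2]$ at landmark points using the explicit formulae from Lemma \ref{lklk}: one finds $\tau(\infty_1)=(0,0)=e_1$, so $[\infty_1-\infty_2]=e_1$ in $E$, and writing $e_3:=(-1,0)=e_1+e_2$ one also has $\tau(T_1)=e_3$. Then consider the auxiliary function $f=v+\sqrt t(u^2-t^3)\in\overline K(C)^*$: because $v$ is asymptotic to $+\sqrt t\,u^2$ at $\infty_1$ and to $-\sqrt t\,u^2$ at $\infty_2$, the function $f$ has a double pole only at $\infty_1$; clearly $f(Q_1)=f(Q_2)=0$, and the factorisation
\[
\bigl(v+\sqrt t(u^2-t^3)\bigr)\bigl(v-\sqrt t(u^2-t^3)\bigr)=(u^2-t^3)(t^4-1)
\]
forces $\textrm{div}(f)=Q_1+Q_2-2\infty_1$. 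This gives $D_G\sim\infty_1-\infty_2$, hence $[D_G]=e_1=(0,0)$.

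For $[D_F]$ I would argue by elimination inside $E[2]=\{O,e_1,e_2,e_3\}$. Each of $[D_F]=O$, $[D_F]=e_1$, $[D_F]=e_3$ imposes a linear equivalence that is easy to refute: the first two require a non-zero element of $L(\infty_1+\infty_2)=\overline K\langle 1,u\rangle$, respectively of $L(2\infty_1)=\overline K\langle 1,v+\sqrt t\,u^2\rangle$, to vanish at both $T_1,T_2$, and in each case substituting the coordinates of $T_1,T_2$ produces a $2\times 2$ homogeneous system with non-zero determinant (respectively $2\sqrt t$ and $2i(t^2-1)\sqrt t$), forcing the trivial solution; and $[D_F]=e_3=[2T_1-\infty_1-\infty_2]$ would give $T_1\sim T_2$, which is impossible on a curve of positive genus. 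Therefore $[D_F]=e_2=(-c^2,0)$. The main technical obstacle throughout is tracking the local behaviour of $v$ at the two points at infinity: the sign of the leading term $\pm\sqrt t\,u^2$ is what distinguishes $\infty_1$ from $\infty_2$ and drives the cancellations needed to pin down both the pole divisor of $f$ and the Riemann--Roch bases appearing in the elimination.
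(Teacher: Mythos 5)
Your proposal is correct, and it follows the paper's skeleton (invoke Theorem \ref{cyclic constr}(1), compute $\mathrm{div}(G)$ and $\mathrm{div}(F)$, then identify the resulting classes $[D_G],[D_F]$ inside $E[2]$), but the identification step is done by a genuinely different method. The paper gets both classes in one line from the descent relation ${}^gP=\sigma_g\cdot P$ established in the proof of Lemma \ref{2c}: writing $D=P+{}^gP-\infty_1-\infty_2=2P+\sigma_g-\infty_1-\infty_2=\tau(P)+\sigma_g$ reduces everything to the three evaluations $\tau(P_1)=0$, $\tau(P_2)=e_1+e_2$, $\sigma_g=\tau(\infty_1)=e_1$. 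You instead work directly with linear equivalences: the auxiliary function $v+\sqrt t\,(u^2-t^3)$ (whose norm-form factorisation against $(t^4-1)G$ you compute correctly) gives $[D_G]=[\infty_1-\infty_2]=e_1$, and a Riemann--Roch elimination over the four candidates in $E[2]$ forces $[D_F]=e_2$. Your route is longer and more computational but entirely self-contained -- it does not lean on the cocycle computation of Lemma \ref{2c} -- while the paper's argument buys brevity by reusing that lemma. One minor point: your assignment of the sign of the leading term of $v$ ($+\sqrt t\,u^2$ at $\infty_1$ versus $\infty_2$) is asserted rather than derived from the coordinates $\infty_1=(1:-1:\sqrt t:\sqrt t)$, but this is harmless, since $\infty_1-\infty_2$ is $2$-torsion (so $[D_G]=e_1$ under either convention) and the two $2\times 2$ determinants in your elimination remain non-zero if the roles of $\infty_1$ and $\infty_2$ are interchanged.
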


\begin{proof}
The proof is an application of theorem \ref{cyclic constr}. 
We retain the notation of lemma \ref{lklk}.
Straightforward
computations yield:
\[\te{div}(G)=2(P_1+{}^gP_1-\infty_1-\infty_2), \quad
\te{ div}(F)=2(P_2+{}^gP_2-\infty_1-\infty_2),\]
 \begin{eqnarray}\label{bbeq}
 \tau(P_1)=0,  \quad   \tau(P_2)=(-1,0)=e_1+e_2, \quad \tau(\infty_1)=(0,0)=e_1
 \end{eqnarray}
 where $P_1=(t\sqrt{t},0)$, $P_2=(\sqrt{t},i(t^2-1)\sqrt{t})$, and $g$ is the non-trivial element of Gal$(K(\sqrt{t})/K)$.

By part $1$ of theorem \ref{cyclic constr} it suffices to show that the following equalities hold in Pic$(\overline C)$.
\begin{eqnarray}\label{bbform}
P_1+{}^gP_1-\infty_1-\infty_2=e_1 \ , \  P_2+{}^gP_2-\infty_1-\infty_2=e_2.
\end{eqnarray}

We remind the reader that we consider the $e_i$ as degree $0$ divisor classes in $\textrm{Pic}(\overline{C})$ in these 
equalities. Now by the proof of lemma \ref{2c}, we have that ${}^gP_1=\sigma_g\cdot P_1=e_1+P_1$ in $\textrm{Pic}(\overline{C})$.
Hence the following equalities hold in $\textrm{Pic}(\overline{C})$

\begin{eqnarray}\label{argumu}
P_1+{}^gP_1-\infty_1-\infty_2&=&P_1+\sigma_g\cdot P_1-\infty_1-\infty_2=\\
2P_1+\sigma_g-\infty_1-\infty_2&=&\tau(P_1)+\sigma_g=e_1.\nonumber
\end{eqnarray}

Similarly we establish that  $P_2+{}^gP_2-\infty_1-\infty_2=e_2$ in $\textrm{Pic}(\overline{C})$.

\end{proof}

Let  $i_1$ be the injection
of $\mu_2$ to the first factor of $\mu_2^2$, and $i_2$ be the injection
of $\mu_2$ to the second factor of $\mu_2^2$. By an abuse
of notation we denote by the same letters the induced maps on cohomology
groups. Let $T/C$ be a torsor under $\mu_2^2$ corresponding to
$i_1([T_G/C])+i_2([T_F/C])$.

Via $\beta$ (see (\ref{ttof})), $T/C$ becomes a torsor under $E[2]$. 
We chose $\beta$ so that the following will hold.
\begin{lemma}\label{lambda2}
Let $w$ denote the isomorphism $E[2]\rightarrow \widehat{E[2]}$ coming from the Weil pairing.
We have that {\rm type}$(T/C)=\lambda_2\circ w^{-1}$.

\end{lemma}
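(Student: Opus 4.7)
The plan is to exploit the naturality of the type construction along two independent axes: (i) the type of a sum of torsors (under the same group of multiplicative type) is the sum of the types, and (ii) pushforward of torsors along a homomorphism $\phi$ of groups of multiplicative type corresponds to precomposition with $\hat\phi$ on types. Since $T/C$ is by construction the torsor under $\mu_2^2$ associated to $i_1([T_G/C])+i_2([T_F/C])$, principle (i) reduces its $\mu_2^2$-type to the data already computed in Theorem~\ref{the types}: under the standard self-dual identification of $(\Z/2\Z)^2$, the $\mu_2^2$-type of $T/C$ sends $(1,0)\mapsto e_1$ and $(0,1)\mapsto e_2$.

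Next, (ii) gives the $E[2]$-type of $T/C$ as the precomposition with $\hat\beta\colon \widehat{E[2]}\to\widehat{\mu_2^2}$. Unwinding the definition (\ref{ttof}) and translating $\mu_2=\{1,-1\}$ into additive notation shows that $\beta$ sends the first standard basis vector of $(\Z/2\Z)^2$ to $e_2$ and the second to $e_1$; dually, $\hat\beta$ sends $e_1^*\mapsto(0,1)^*$ and $e_2^*\mapsto(1,0)^*$. Composing with the formula of the previous paragraph, the $E[2]$-type of $T/C$ sends $e_1^*\mapsto e_2$ and $e_2^*\mapsto e_1$ in $\Pic(\overline C)$.

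It remains to compute $\lambda_2\circ w^{-1}$. Being the unique non-degenerate alternating form on the rank-two $\mathbb F_2$-vector space $E[2]$, the Weil pairing $\langle\cdot,\cdot\rangle$ satisfies $\langle e_1,e_2\rangle=-1$ and $\langle e_i,e_i\rangle=1$. Hence $w(e_1)=e_2^*$ and $w(e_2)=e_1^*$, so that $w^{-1}$ sends $e_1^*$ to $e_2$ and $e_2^*$ to $e_1$; composing with $\lambda_2$ yields exactly the map obtained in the previous paragraph.

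The main difficulty is purely bookkeeping: keeping the three intertwined conventions --- additive versus multiplicative $\Z/2\Z=\mu_2$, the direction of $\beta$ versus that of $\hat\beta$, and the sign of the Weil pairing --- correctly aligned. The identification $\beta$ chosen in (\ref{ttof}) is precisely engineered to make the swap induced by $\hat\beta$ cancel the swap induced by $w$, producing the clean identity stated in the lemma.
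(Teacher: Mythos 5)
Your argument is correct and is exactly the computation the paper intends: it cites the types of $T_G/C$ and $T_F/C$ from Theorem~\ref{the types} and says the result "follows from going through the definitions and the identifications," which is precisely the bookkeeping with $\hat\beta$, additivity and functoriality of the type, and the Weil pairing that you carry out. Your version just makes the swap induced by $\beta$ and its cancellation against the swap induced by $w$ explicit.
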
 
\begin{proof}
 We know {\rm type}$(T_F/C)$ and {\rm type}$(T_G/C)$ from theorem \ref{the types}.
 The result follows from going through the definitions and the identifications.
\end{proof}

\begin{proposition}\label{asqw}
The following diagram is commutative

\centerline{
\begin{xy}
(40,30)*+{H^1(C,\widehat{E[2]})}="ka"; 
(40,45)*+{H^1(K,\widehat{E[2]})}="km";
(40,60)*+{H^1(K,E[2])}="pa"; 
(90,30)*+{\Br_1(C)}="kd";
(90,60)*+{H^1(K,\textrm{Pic}(\overline{C}))}="pd"; 
{\ar@{->}^{\cup [T/C]} "ka";"kd"};
{\ar@{->}^{\lambda_{2_*}} "pa";"pd"};
{\ar@{->}^{w_*} "pa";"km"};
{\ar@{->}^{s^*} "km";"ka"};
{\ar@{->}_{\psi} "kd";"pd"};
\end{xy}}
For $p,g\in K^*$ we have 
\[
 (s^*\circ w_*)((p,g)) \cup [T/C]    =(F,p)_2+(G,g)_2
\]

\end{proposition}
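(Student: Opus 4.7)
The plan is to prove the two parts of the proposition separately: first the commutativity of the diagram, then the explicit formula for $(s^*\circ w_*)((p,g))\cup[T/C]$. The commutativity is essentially Theorem \ref{type theorem} applied to the $E[2]$-torsor $T/C$: that theorem gives $\psi\circ(\cup[T/C])\circ s^*=(\text{type of }T/C)_*$, and Lemma \ref{lambda2} identifies this type as $\lambda_2\circ w^{-1}$. Hence $(\text{type})_*=\lambda_{2*}\circ(w^{-1})_*$, and precomposing with the isomorphism $w_*\colon H^1(K,E[2])\to H^1(K,\widehat{E[2]})$ cancels the $(w^{-1})_*$, yielding $\psi\circ(\cup[T/C])\circ s^*\circ w_*=\lambda_{2*}$, which is exactly the required commutativity.

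For the explicit formula I would use the construction of $T/C$: under $\beta_*$, the class $[T/C]\in H^1(C,E[2])$ comes from $i_1[T_G]+i_2[T_F]\in H^1(C,\mu_2^2)$, and similarly the class $(p,g)\in H^1(K,E[2])$ comes from $(p,g)\in H^1(K,\mu_2^2)$. By Theorem \ref{the types}(1) the generic fibres of $T_G/C$ and $T_F/C$ are represented by $G$ and $F$ in $K(C)^*/K(C)^{*2}\cong H^1(K(C),\mu_2)$. Since $\Br_1(C)\hookrightarrow\Br(K(C))$, it suffices to compute the cup product after restriction to the generic point of $C$, where the cup of two Kummer classes is the quaternion symbol. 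Transporting the duality pairing $\widehat{E[2]}\otimes E[2]\to\mu_2$ to $\mu_2^2\otimes\mu_2^2$ through $\beta$ and $w$ produces a bilinear form on $H^1(\,\cdot\,,\mu_2^2)$-classes which, in Kummer terms, sends $((a,b),(c,d))$ to $(a,d)_2+(b,c)_2$ in $\Br$. This follows from the values $\langle e_1,e_2\rangle=-1$, $\langle e_i,e_i\rangle=1$ of the Weil pairing together with the normalisations $\beta((1,-1))=e_1$, $\beta((-1,1))=e_2$. Bilinearity then splits our cup product as $s^*(p)\cup[T_F]+s^*(g)\cup[T_G]$, which on the generic fibre equals $(p,F)_2+(g,G)_2=(F,p)_2+(G,g)_2$ by symmetry of the $2$-symbol.

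The main obstacle is the "crossed" rather than "diagonal" nature of the pairing above: naively one might expect the decomposition $s^*(p)\cup[T_G]+s^*(g)\cup[T_F]$, but the Weil pairing together with the particular normalisation of $\beta$ in (\ref{ttof}) swaps the two coordinates. Verifying this crossing cleanly requires simultaneously tracking the Weil self-duality $w\colon E[2]\xrightarrow{\sim}\widehat{E[2]}$ and the standard self-duality $\mu_2^2\xrightarrow{\sim}\widehat{\mu_2^2}$, and is essentially the same compatibility already underlying Lemma \ref{lambda2}; once that bookkeeping is done, the rest of the argument is a routine application of bilinearity and the interpretation of cup products of Kummer classes as quaternion symbols.
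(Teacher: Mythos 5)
Your proposal is correct and follows essentially the same route as the paper: commutativity via Theorem \ref{type theorem} combined with Lemma \ref{lambda2}, and the formula by passing to the generic point and expanding the cup product by bilinearity. The only difference is one of detail — where the paper says "it is not hard to see by diagram chasing", you correctly spell out that the alternating Weil pairing forces the crossed terms $(p,F)_2+(g,G)_2$ rather than the diagonal ones.
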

\begin{proof}
Using lemma \ref{lambda2} the diagram is commutative by theorem \ref{type theorem}.
For the last statement: $\Br(C)\hookrightarrow \Br(K(C))$ and by theorem \ref{the types} the generic fibre 
of $T/C$ is represented by $(G,F)$ in $H^1(K(C),E[2])$. Take $(p,g) \in H^1(K,E[2])$ and now work on the $K(C)$-level. Since all the
the maps involved are functorial and the cup product is bilinear, it is not hard to see by diagram chasing that 
 the cup product of $(p,g)$ with the generic fibre of $T/C$ is precisely $(F,p)_2+(G,g)_2$. 
\end{proof}

\begin{proposition}\label{AA}
 Let $\mathcal A  = (FG,t+1)_2+(F,t+i)_2$. Then $\mathcal A$ belongs to $\Br(S)$ and is not zero.
\end{proposition}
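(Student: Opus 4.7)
The plan is to combine Proposition~\ref{asqw}, Proposition~\ref{kernel}, and an explicit residue calculation at the singular fibres of $\phi$. By bilinearity of the Hilbert symbol,
\[\mathcal{A}=(FG,t+1)_2+(F,t+i)_2=(F,(t+1)(t+i))_2+(G,t+1)_2.\]
Setting $(p,g)=((t+1)(t+i),\,t+1)\in H^1(K,E[2])$ under the identification of $E[2]$ with $\mu_2^2$ via $\beta$, Proposition~\ref{asqw} presents $\mathcal{A}$ as $(s^*\circ w_*)((p,g))\cup[T/C]$, which places it in $\Br_1(C)\subset\Br(C)$. Commutativity of the diagram in Proposition~\ref{asqw}, together with Lemma~\ref{lambda2}, identifies the image $\psi(\mathcal{A})\in H^1(K,\Pic(\overline{C}))$ with $\lambda_{2_*}((p,g))$.

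For nonvanishing I would check that $(p,g)\notin J=\ker\lambda_{2_*}$. Proposition~\ref{kernel} gives explicit generators of $J$, and enumerating its eight elements shows that their first coordinates modulo squares in $\mathbb{C}(t)^*$ lie in $\{1,\,t^2+1,\,t^2-1,\,t^4-1\}$. The divisors of these four candidates reduced modulo $2$ are respectively $0$, $[t-i]+[t+i]$, $[t-1]+[t+1]$, and $[t-1]+[t+1]+[t-i]+[t+i]$, none of which equals the mod-$2$ divisor class $[t+1]+[t+i]$ of $(t+1)(t+i)$. Hence $(p,g)\notin J$, so $\psi(\mathcal{A})\neq 0$ and $\mathcal{A}\neq 0$ in $\Br_1(C)$.

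For membership in $\Br(S)$, Grothendieck's purity reduces the problem to vanishing of the residue of $\mathcal{A}$ at each codimension-one point of $S$. Horizontal divisors are handled by $\mathcal{A}\in\Br(C)$, so only vertical divisors remain. For any component $D$ of a fibre $\phi^{-1}(t_0)$ with $t_0\notin\{-1,-i,\infty\}$, one has $\nu_D(t+1)=\nu_D(t+i)=0$ and the residue of each symbol reduces to a power of $t_0+1$ or $t_0+i$, which lies in $\mathbb{C}^*\subset\kappa(D)^{*2}$. The remaining cases are the four multiplicity-one lines making up each of the $I_4$-fibres $\phi^{-1}(-1)$, $\phi^{-1}(-i)$, $\phi^{-1}(\infty)$. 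On each line one parametrises the line explicitly (for instance, the four components of $\phi^{-1}(-1)$ are $\{x=\epsilon_1 iz,\,y=\epsilon_2 iw\}$ with $\epsilon_i\in\{\pm 1\}$), introduces a normal coordinate $\alpha$ to the line in $S$ using $x^4-y^4=z^4-w^4$, Taylor-expands $F,G,t+1,t+i$ to leading order in $\alpha$, and substitutes into the residue formula
\[\partial_\nu((a,b)_2)=(-1)^{\nu(a)\nu(b)}\,\overline{a^{\nu(b)}b^{-\nu(a)}}\pmod{\kappa(D)^{*2}}.\]
At the lines of $\phi^{-1}(-1)$ and $\phi^{-1}(-i)$ the leading terms of $F$ and $G$ carry visible square factors such as $(z+iw)^2$, so each summand residue is individually a square modulo $\mathbb{C}^*$. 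At the lines of $\phi^{-1}(\infty)$ each summand residue is individually nontrivial, but the two reduce to the same class in $\kappa(D)^*/\kappa(D)^{*2}$ and therefore cancel in the $2$-torsion sum.

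The main obstacle is this local calculation at the three bad fibres, and in particular the cancellation over $t=\infty$: it requires a precise matching of the leading asymptotics of $F$ and $G$ as $t\to\infty$, so that the two individually nontrivial summand residues at each line of $\phi^{-1}(\infty)$ coincide modulo squares.
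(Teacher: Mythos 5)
Your proposal is correct and follows essentially the same route as the paper: non-vanishing via Proposition~\ref{asqw} and Proposition~\ref{kernel} (your divisor-mod-$2$ check that $((t+1)(t+i),t+1)\notin J$ just makes the paper's appeal to Proposition~\ref{kernel} explicit), and membership in $\Br(S)$ via Grothendieck's purity applied to the vertical divisors, with the fibres over $-1$, $-i$, $\infty$ as the only ones needing genuine computation. Your asserted outcomes at those fibres (constant, hence square, residues over $-1$ and $-i$; matching residues that cancel over $\infty$) agree with the divisor data and tables in the paper's appendix.
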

\begin{proof}
First let us show that  $\mathcal A$ is not zero. By proposition \ref{asqw}
\[\psi(\mathcal A)=\lambda_{2_*}(((t+1)(t+i),t+1))
\]
The latter is not zero by proposition \ref{kernel}. Hence $\mathcal A$ is not zero either.

 To prove that $\mathcal A$ belongs 
to $\Br(S)$ we use Grothendieck's purity theorem for the Brauer group (see \cite[Theorem 1.3.2]{CT}). We have to show that $\mathcal A$ has trivial residue at all prime divisors of $S$. Since $\mathcal A$ is given explicitly, this can be established by 
straightforward calculations (cf. remark below).

\end{proof}
\noindent {\bf Remark.}
The tables of the appendix provide information for the calculations involved in the explicit use of
Grothendieck's purity theorem.
Here we give one example, by showing that $\mathcal A$ has trivial residue at $l_5$ (notation as in appendix). 
For more examples of calculating residues see the calculations after theorem \ref{bigthm}, and proposition \ref{neothm}. 

 Let $\mathcal O_{S,l_5}$ be the discrete valuation ring of $l_5$ in $S$. Denote by $L$, $J$, $\textrm{val}_{l_5}$ its field of fractions, residue field and corresponding valuation on $L$ respectively.
We have the commutative diagram:

\centerline{
\begin{xy}
(40,30)*+{H^2(L,\mu_2)}="ka"; 
(40,50)*+{\te{Br}(L)}="pa"; 
(70,30)*+{H^1(J,\mu_2)}="kd";
(70,50)*+{H^1(J,\mathbb Q/\mathbb Z)}="pd"; 
{\ar@{->}^{\partial_{l_5}} "pa";"pd"};
{\ar@{->}^{\partial_{l_5}^{'}} "ka";"kd"};
{\ar@{->}^{} "ka";"pa"};
{\ar@{->}_{} "kd";"pd"};
\end{xy}}

\noindent where the vertical maps are the natural injections (remember that we identified $\mu_2$ with $\Z/2\Z$), $\partial_{l_5}$ is the residue map, and $\partial_{l_5}^{'}$ is the residue map corresponding to $\mu_2$. For the latter map and its basic properties we will use, see \cite[99.D]{Merk} or \cite[\S 7]{SerreInv}.

\noindent We have that $\textrm{val}_{l_5}(FG)=2$ and $\textrm{val}_{l_5}(t+1)=0$. These imply that  $$\partial_{l_5}^{'}((FG,t+1)_2)=1$$ and so a fortiori $\partial_{l_5}((FG,t+1)_2)=1$.
\noindent For the other quaternion algebra we have that $\textrm{val}_{l_5}(F)=1$ and $\textrm{val}_{l_5}(t+i)=0$. Hence $$\partial_{l_5}^{'}((F,t+i)_2)=\ov{t+i}$$
 where $\overline{\phantom{A}}$ denotes the composite map 
$$O_{S,l_5}^*\to J^* \to H^1(J,\mu_2)$$
\noindent Since $\ov{t+i}=\ov {1+i}$, and $1+i$ is a square in $J$ (as $\mathbb C \subset J$) we deduce that $\partial_{l_5}((F,t+i)_2)=\partial_{l_5}^{'}((F,t+i)_2)=1$. Therefore $\partial_{l_5}(\mathcal A)=1$, ie. $\mathcal A$ has trivial residue at $l_5$.

\noindent {\bf Remark.}
We already know that $\mathcal A\in\te{Br}(C)$, since $$\mathcal A=(s^*\circ w_*)(((t+1)(t+i),t+1))\cup [T/C].$$
Hence we need only check that $\mathcal A$ has trivial residue at prime divisors of $S$ which are vertical with respect to the fibration $\phi$.

Finally let us note  that possible ambiguities concerning the sign of the residue maps do not concern us, as we are only interested in the corresponding kernels.

\noindent {\bf Remark.}
 Let us say what we are actually doing here. Let
\[R= \psi^{-1}(\lambda_{2_*}(H^1(K,E[2]))).
 \]

\noindent Modulo constant algebras, $R$ corresponds to the information about the Brauer group of $C$ 
we get by considering all unramified double coverings of $C$ (since $C$-torsors under $\mu_2$ of the same type differ by a cocycle coming from $H^1(K,\mu_2)$).
We have exhibited elements, 
namely $(F,p)_2+(G,g)_2$, that span $R$ modulo constant algebras (actually in our case Tsen's theorem tells us all constant algebras
are trivial). Now $R$ is a subgroup of index $2$ of $\Br(C)[2]$. This is because $C$ has no section, and what is missing is the image of a cocycle class $\xi$ such that $2\xi =[C]$ in $H^1(K,E)$. $\Br(S)[2]$ contains three non-zero elements. It could happen that all three elements lie in $R$. However this is not the case- only $\mathcal A$ lies in $R$ (cf. theorem \ref{bigthm}).

\begin{lemma}\label{thediv}
Let $L=K(\sqrt[4]t)$. We use the notation of theorem \ref{the types} and its proof.
Let 
\[D'=P_2+{}^gP_2+P_1-3\infty_1\in \textrm{Div}(C_L)
\]
Let $T$ be its class in $\textrm{Pic}(\overline C )$.
Then $2T=(0,0)$ and $\textrm{div}(F^2G)=N_{L/K}(D')$.
\end{lemma}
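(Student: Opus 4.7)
The plan is to treat the two claims in the lemma completely separately. For the divisor identity $\textrm{div}(F^2G)=N_{L/K}(D')$ the argument is a brute Galois-theoretic bookkeeping: I compute how $\Gal(L/K)$ acts on each summand of $D'$, sum the translates, and match with the known divisors of $F$ and $G$. For the torsion claim $2T=(0,0)$ I rewrite $2D'$ in $\Pic(\ov C)$ using the three $\tau$-identities recorded in the proof of Theorem \ref{the types} and Lemma \ref{2c}, after which everything collapses to $2$-torsion arithmetic on $E$.

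For the first part, write $\alpha=\sqrt[4]{t}$ so that $\Gal(L/K)=\langle\sigma\rangle\cong\Z/4\Z$ with $\sigma(\alpha)=i\alpha$. The restriction of $\sigma$ to $K(\sqrt t)$ is the element $g$, hence $\sigma(P_1)={}^gP_1$, $\sigma(P_2)={}^gP_2$, while $\sigma(\infty_1)=\infty_2$ from the coordinates of $\infty_1=(1:-1:\sqrt t:\sqrt t)$. Consequently $\sigma^2$ fixes $P_1,P_2,\infty_1$, so $\sigma^2(D')=D'$ and $\sigma^3(D')=\sigma(D')$, giving
\[
N_{L/K}(D')=2D'+2\sigma(D')=4P_2+4\,{}^gP_2+2P_1+2\,{}^gP_1-6\infty_1-6\infty_2.
\]
By Theorem \ref{the types} this is exactly $2\,\textrm{div}(F)+\textrm{div}(G)=\textrm{div}(F^2G)$.

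For the second part, I work in $\Pic(\ov C)$ using the three identities (from the computations preceding Theorem \ref{the types} and from the proof of Lemma \ref{2c}, all interpreting elements of $E[2]$ as degree zero divisor classes):
\[
[2P_1]=[\infty_1]+[\infty_2],\qquad [P_2]+[{}^gP_2]=e_2+[\infty_1]+[\infty_2],\qquad [\infty_1]-[\infty_2]=e_1.
\]
Doubling the middle identity kills $2e_2$, hence
\[
[2D']=2[P_2]+2[{}^gP_2]+[2P_1]-6[\infty_1]=2([\infty_1]+[\infty_2])+([\infty_1]+[\infty_2])-6[\infty_1]=-3([\infty_1]-[\infty_2]).
\]
Substituting the third identity and using $2e_1=0$ gives $2T=-3e_1=e_1=(0,0)$, as claimed. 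The calculation is essentially routine once the right substitutions are made; the only potential pitfall is to stay consistent about whether $e_1,e_2$ are being viewed as points of $E$ or as degree zero divisor classes on $C$, and to remember that all terms of the form $2e_i$ vanish in $\Pic(\ov C)$ because $e_i\in E[2]$.
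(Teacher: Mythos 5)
Your proof is correct and follows the same route the paper intends: the divisor identity is the Galois bookkeeping the paper dismisses as ``obvious,'' and the computation of $2T$ is exactly the argument of (\ref{argumu}) (reduce to $\tau(P_1)$, $\tau(P_2)$ and $\sigma_g=\infty_1-\infty_2=e_1$, then use that everything is $2$-torsion), which is what the paper's one-line proof refers to. Your version merely writes out the details the paper omits.
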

\begin{proof}

The fact that $\textrm{div}(F^2G)=N_{L/K}(D')$ is obvious. To show that $2T=(0,0)$ we use the same argument 
we used in showing the equalities (\ref{argumu}).

\end{proof}
Note that we have implicitly chosen a primitive $4$-th root of unity in $K$, namely $i$. This has the effect of fixing an isomorphism $\mu_4\cong\mathbb Z/4$, namely sending $i$ to $1$. It also chooses a generator of Gal$(K(\sqrt[4]\alpha)/K)$, $\alpha\in K^*$. It 
is the automorphism given by 
$\sqrt[4]\alpha\mapsto i\sqrt[4]\alpha$. These facts are implicit in the following theorem.
\begin{theorem}\label{coc}
Let
\[\mathcal Z=(t,F^2G)_4\in\Br(K(C))\]
Then
\begin{enumerate}

\item $\mathcal Z\in \Br(C)$

\item Let $L=K(\sqrt[4]t)$ and $H=\textrm{Gal}(L/K)$.
 Let $\sigma_1: H\rightarrow \textrm{Pic}^0(C_L)$ be the cocycle that maps the chosen generator of $H$ to $T$. This cocycle naturally induces
a cocycle $\sigma: \textrm{Gal}(\overline K/K)\rightarrow \textrm{Pic}(\overline C) $. Note that the class of $\sigma$ is the image of the class of $\sigma_1$ under the composite map:
\[H^1(H,\textrm{Pic}^0(C_L))\rightarrow H^1(H,\textrm{Pic}(\overline C)^{\textrm{Gal}(\overline
 K/L)}))\rightarrow H^1(K,\textrm{Pic}(\overline C))
\]
where the map on the right is the inflation map. Then $\psi(\mathcal Z)$ is the class of $\sigma$.

\end{enumerate}
\end{theorem}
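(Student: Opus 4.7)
The plan is to realize $\mathcal{Z}$ as a cup product coming from the framework of theorems \ref{type theorem} and \ref{cyclic constr}. Set $L = K(\sqrt[4]{t})$ and $H = \te{Gal}(L/K) \cong \Z/4$. By lemma \ref{thediv}, $\te{div}(F^2G) = N_{L/K}(D')$, so part 2 of theorem \ref{cyclic constr}, applied with $f = F^2G$ and $D = D'$, produces a torsor $Y/C$ under $R^1 = R^1_{L/K}\mathbb G_m$ whose type $\lambda : \widehat{R^1} \to \Pic(\overline C)$ sends $\overline{1e}$ to $T = [D']$, and whose generic fibre is the class of $F^2G$ in $K(C)^*/N_{L(C)/K(C)}(L(C)^*) \cong H^1(K(C), R^1)$.

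I next introduce the character $\chi_L \in H^1(K, \widehat{R^1})$ associated to the cyclic extension $L/K$: with our fixed choice of primitive $4$-th root of unity $i$, the chosen generator $g$ of $H$ sends $\sqrt[4]{t}$ to $i\sqrt[4]{t}$, and $\chi_L$ is the unique element whose image under the composite $H^1(K, \widehat{R^1}) \hookrightarrow H^2(K, \Z) = H^1(K, \Q/\Z)$ is the character of $\te{Gal}(\overline K / K)$ factoring through $H$ and sending $g$ to $1/4$. The standard identification of cyclic algebras with cup products then yields $\chi_L \cup F^2G = (t, F^2G)_4 = \mathcal{Z}$ in $\Br(K(C))$. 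Applying theorem \ref{type theorem} to $Y/C$, the cup product $s^*\chi_L \cup [Y/C]$ lies in $\Br_1(C) \subset \Br(C)$, and its image in $\Br(K(C))$ equals $\chi_L \cup (\text{generic fibre of } Y/C) = \chi_L \cup F^2G = \mathcal{Z}$. Since $\Br(C) \hookrightarrow \Br(K(C))$, this proves (1) and identifies $\mathcal{Z}$ with $s^*\chi_L \cup [Y/C]$ inside $\Br(C)$.

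For (2), the commutativity of the diagram in theorem \ref{type theorem} immediately gives $\psi(\mathcal{Z}) = \lambda_*(\chi_L)$ in $H^1(K, \Pic(\overline C))$. To identify $\lambda_*(\chi_L)$ with $[\sigma]$, I use the fact that $\chi_L$ inflates from $H^1(H, \widehat{R^1})$. Using the short exact sequence $0 \to \Z \to \Z[H] \to \widehat{R^1} \to 0$, $\chi_L$ is represented by the explicit cocycle $c : H \to \widehat{R^1}$ with $c(g^i) = \sum_{k=0}^{i-1}\overline{\delta_k}$ (where $\overline{\delta_k}$ is the image of $1\cdot g^k \in \Z[H]$); a short direct check shows that $c$ is a $1$-cocycle whose image in $H^2(H, \Z)$ under the connecting map is the standard generator corresponding to $\chi_L$. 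Pushing forward by $\lambda$ yields $\lambda_* c(g^i) = \sum_{k=0}^{i-1} g^k T$, which is exactly the cocycle obtained by extending $\sigma_1(g) = T$ via the cocycle relation. Inflating to $\te{Gal}(\overline K / K)$, we conclude $\psi(\mathcal{Z}) = \lambda_*(\chi_L) = [\sigma]$, completing the proof.

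The main obstacle will be the bookkeeping in this last step: verifying that the explicit cocycle $c$ above really represents $\chi_L$ (rather than some other generator of $H^1(H, \widehat{R^1}) \cong \Z/4$), and that the chosen primitive $4$-th root of unity $i$ matches the chosen generator of $H$ consistently throughout the identifications. The identity $2T = 0$ from lemma \ref{thediv} provides a useful safeguard: it forces $2\sigma_1 = 0$ as a cocycle and hence $2[\sigma_1] = 0$, so any sign ambiguity between $\chi_L$ and $-\chi_L$ is absorbed and does not affect the final identification.
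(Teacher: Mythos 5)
Your argument follows the paper's proof essentially verbatim: both apply part (2) of Theorem \ref{cyclic constr} with $f=F^2G$ and $D=D'$ to produce the torsor $Y/C$ under $R^1_{L/K}\mathbb G_m$, then invoke Theorem \ref{type theorem} together with the compatibility of the cup-product pairing under $\mu_4\hookrightarrow R^1$ (your ``standard identification of cyclic algebras with cup products'' is exactly the paper's commutative diagram of pairings) to identify $\mathcal Z$ with the cup product against $[Y/C]$ and $\psi(\mathcal Z)$ with the pushforward of the relevant character along the type map. One small caveat about your closing ``safeguard'': Lemma \ref{thediv} says $2T=(0,0)=e_1$, a nontrivial $2$-torsion point of $E$ (so $T$ has order $4$ and $2[\sigma_1]$ is not zero in $H^1(K,E)$, where it equals $[C]$); the sign ambiguity you worry about is nonetheless harmless, but for the different reason that $[C]$ dies in $H^1(K,\textrm{Pic}(\overline C))$, so the class of $\sigma$ there is genuinely $2$-torsion.
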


\begin{proof}

The proof is basically an application of theorem \ref{cyclic constr} and theorem \ref{type theorem}.

 Lemma \ref{thediv} tells us that if we take
\[L=L\, , k=K\, , X=C \, , f=F^2G \, , D=D'
\]
the conditions of part (2) of theorem \ref{cyclic constr} are satisfied. Let $Y/C$ be the torsor from theorem \ref{cyclic constr}. Now we apply theorem \ref{type theorem}. The rest of the proof is calculations with cocycles.
 
We have isomorphisms
\[H^1(H,\widehat{R^1})\cong H^1(K,\widehat{R^1})\cong H^1(H,\mathbb Q/\mathbb Z).
\]
 The first isomorphism is just the inflation map (since $I=\textrm{Gal}(\overline
 K/L)$ acts trivially on $\widehat{R^1}$ and $H^1(I,\widehat{R^1})=0$, the inflation map is indeed an isomorphism). The composite isomorphism is induced by the $\textrm{Gal}(\overline K/K)$-module (and hence $H$-module) homomorphism that maps $\overline{1e}$ to $1/4$. We identify these three cohomology groups via these isomorphisms. 

Let $\sigma_2$ be the cocyle which maps the chosen generator of $H$ to $1/4$. Clearly $\sigma_2$
 is mapped to $\sigma$ under type$(Y/C)_*$. The proof 
will be concluded once we show that on 
the $K(C)$-level the cup product of $\sigma_2$ with the class of the generic fibre of $Y/C$ is $\mathcal Z$. This follows from the commutativity of

\centerline{
\begin{xy}
(0,90)*+{R^1}="p1"; 
(10,90)*+{\times}="p2"; 
(20,90)*+{\widehat{R^1}}="p3";
(40,90)*+{\mathbb G_m}="p4";  
(0,70)*+{\mu_4}="k1"; 
(10,70)*+{\times}="k2"; 
(20,70)*+{\mathbb Z/4\mathbb Z}="k3";
(40,70)*+{\mathbb G_m}="k4";  
{\ar@{->} "p3";"p4"};
{\ar@{->} "k3";"k4"};
{\ar@{->}^{l} "k1";"p1"};
{\ar@{->}^{\widehat l} "p3";"k3"};
{\ar@{->}^{id} "p4";"k4"};
\end{xy}}

\noindent where $\mu_4\xrightarrow{l} R^1$ is the natural injection.

\end{proof}
\begin{lemma}\label{big}

 Let
\[
 R=\psi^{-1}((\lambda_2)_*(H^1(K,E[2])).
\]

 Then
\[\Br(C)[2]=R\sqcup (R+\mathcal Z)
\]
where $\sqcup$ stands for disjoint union.

\end{lemma}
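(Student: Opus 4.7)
The plan is to prove three assertions in sequence: (a) $\mathcal Z \in \Br(C)[2]$; (b) $\mathcal Z \notin R$; (c) the index $[\Br(C)[2]:R]$ is at most $2$. Together these force the claimed decomposition $\Br(C)[2] = R \sqcup (R+\mathcal Z)$.

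The crux, handling (a) and (b) simultaneously, is the identity $2[\sigma_E] = [C]$ in $H^1(K,E)$, where $[\sigma_E] \in H^1(K,E)$ is a natural lift of $\psi(\mathcal Z)$ along $\rho$. By theorem \ref{coc}, $\psi(\mathcal Z)$ is represented by the cocycle $\sigma$ inflated from $\sigma_1:H\to\textrm{Pic}^0(C_L) = E(L)$ sending the chosen generator $h$ of $H$ to $T$. Viewing the values of $\sigma$ as lying in $E(\overline K) \subset \textrm{Pic}(\overline C)$ produces $[\sigma_E]$ with $\rho([\sigma_E]) = \psi(\mathcal Z)$. To verify the identity, I would use lemma \ref{thediv} to get $2T = e_1$, combine with $E[4] \subset E(K)$ (so Galois acts trivially on $E[4]$ and any cocycle with values there is a homomorphism), and check that $2\sigma_E$ is the homomorphism $G \to E[2]$ sending $h \mapsto e_1$. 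Lemma \ref{2c} tells us $[C]$ is the same homomorphism: under the identification $\beta$, the class $(1,t)$ corresponds to the character of $\textrm{Gal}(K(\sqrt t)/K)$ sending its generator to $e_1$, and $h$ projects to this generator under the surjection $H \twoheadrightarrow \textrm{Gal}(K(\sqrt t)/K)$. This careful bookkeeping of cocycles and identifications is the main obstacle.

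Granted the identity, applying $\rho$ gives $2\psi(\mathcal Z)=\rho([C])=0$, so by injectivity of $\psi$ (which holds because $\Br(K)=0$ by Tsen), $\mathcal Z \in \Br(C)[2]$, giving (a). If $\mathcal Z$ were in $R$ then $[\sigma_E]$ would lie in $H^1(K,E)[2] + \langle [C]\rangle = H^1(K,E)[2]$, contradicting $2[\sigma_E] = [C] \neq 0$ (proposition \ref{kernel}); so (b) holds. For (c), the injectivity of $\psi$ yields an embedding $\Br(C)[2]/R \hookrightarrow H^1(K,\textrm{Pic}(\overline C))[2]/(\lambda_2)_*(H^1(K,E[2]))$, and it suffices to bound the target by $2$. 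Any $2$-torsion class there lifts via $\rho$ to $\alpha \in H^1(K,E)$ with $2\alpha \in \langle [C]\rangle$, and either $2\alpha=0$ (trivial coset) or $2\alpha = [C]$ (any two such $\alpha$ differ by an element of $H^1(K,E)[2]$, giving a single non-trivial coset). Thus the quotient has order at most $2$; combined with (b), $\Br(C)[2]/R = \mathbb Z/2$ is generated by the image of $\mathcal Z$, and $\Br(C)[2] = R \sqcup (R + \mathcal Z)$.
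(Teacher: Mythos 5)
Your proof is correct and follows essentially the same route as the paper: both arguments hinge on the identity $2\xi=[C]$ in $H^1(K,E)$ for a lift $\xi$ of $\psi(\mathcal Z)$, the injectivity of $\psi$ (Tsen), the fact that $H^1(K,E)[2]$ comes from $H^1(K,E[2])$, and the identification $H^1(K,\textrm{Pic}(\overline C))\cong H^1(K,E)/\langle[C]\rangle$. Your explicit verification of $2\xi=[C]$ (via $2T=e_1$ and the trivial Galois action on $E[4]$) fills in a step the paper dismisses as ``easy to verify,'' and your index-two packaging of the final step is only cosmetically different from the paper's case analysis on whether the lift $d$ satisfies $2d=0$ or $2d=[C]$.
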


\begin{proof}

We have that (cf. proof of proposition \ref{kernel})

\begin{eqnarray}\label{ddd}
H^1(K,E)/[C]\cong H^1(K,\te{Pic}(\overline{C})).
\end{eqnarray}
Let $\sigma$ be the cocycle of theorem \ref{coc} considered as taking values in $E(\overline K)$. Let $\xi$ be its class in $H^1(K,E)$, and $\overline \xi$ its class in $H^1(K,\te{Pic}(\overline C))$. The cocycles are explicitly given and it is easy to verify that $2\xi=[C]$ - remember that we know $[C]$ from lemma \ref{2c}, and in particular its description as an element of $H^1(K(\sqrt[4]t)/K,E[2])$.
Moreover $\psi(\mathcal Z)=\overline \xi$ by theorem \ref{coc}.

We note $3$ things we already know: (i) the map $\psi$ is injective (by its definition from the Hochschild-Serre spectral sequence, and the fact that $\Br(K)=0$ by Tsen's theorem), (ii) $[C]$ has order $2$ in $H^1(K,E)$ (cf. proof of proposition \ref{kernel}), and (iii) elements of $H^1(K,E)$ that are killed by $2$ come from elements of $H^1(K,E[2])$ .

The lemma follows easily from the observations above:

\noindent First we show that $\mathcal Z$ is killed by $2$. This is clear since $\psi$ is injective and

\[\psi(2\mathcal Z)=2\psi(\mathcal Z)=2\overline \xi=\overline{[C]}=0 .
\]

\noindent Now suppose that the union in the lemma is not disjoint. Then $\overline \xi =\psi(\mathcal Z)=\lambda_{2_*}(l)$ for some $l\in H^1(K,E[2])$. Hence $\xi-l$ belongs to the subgroup of $H^1(K,E)$ generated by $[C]$. In particular it is killed by $2$ and 
so $0=2(\xi-l)=2\xi=[C]$. This is a contradiction since $[C]$ is not trivial.

\noindent Finally, if $l\in \te{Br}(C)[2]$ then $\psi(l)$ can be lifted to an element $d\in H^1(K,E)$ such that $2d=0$ or $2d=[C]$. 
Since elements of $H^1(K,E)$ that are killed by $2$ come from elements of $H^1(K,E[2])$ we have that $l\in R$ in the first case and that $l\in R+\mathcal Z$ in the second case.

\end{proof}

\begin{theorem}\label{bigthm}
Let
\begin{eqnarray}
\mathcal D=\mathcal Z +(t+1,FG)_2 
\end{eqnarray}
Then $\mathcal A$, $\mathcal D$ and $\mathcal A+\mathcal D$ are the three non-zero elements of $\Br(S)[2]$.

\end{theorem}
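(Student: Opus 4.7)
The plan is to pin down $|\Br(S)[2]|$ by $K3$ theory, then verify that each of the three proposed classes is unramified on $S$, and finally separate them using Lemma~\ref{big}. Since $S$ is a diagonal quartic over $\mathbb C$, it is a $K3$ surface of maximal geometric Picard rank $\rho=20$; together with $\beta_2=22$ and $\Br(S)=\Br(\overline S)$, this gives $\Br(S)\cong(\mathbb Q/\mathbb Z)^{22-\rho}=(\mathbb Q/\mathbb Z)^2$, so $\Br(S)[2]\cong(\mathbb Z/2)^2$ has exactly three non-zero elements. Thus it is enough to exhibit three distinct non-zero classes in $\Br(S)[2]$ of the stated form.

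For membership, $\mathcal A\in\Br(S)$ is Proposition~\ref{AA}. For $\mathcal D$, Theorem~\ref{coc} gives $\mathcal Z\in\Br(C)$, while applying Proposition~\ref{asqw} with $(p,g)=(t+1,t+1)$ shows
\[
(FG,t+1)_2=(F,t+1)_2+(G,t+1)_2=(s^*\circ w_*)((t+1,t+1))\cup[T/C],
\]
which lies in $\Br_1(C)\subset\Br(C)$; hence $\mathcal D\in\Br(C)$. Exactly as for $\mathcal A$ in the remark following Proposition~\ref{AA}, it then suffices to show that $\mathcal D$ has trivial residue along every vertical prime divisor of $\phi$, i.e.\ along each of the $24$ components of the six singular fibres listed in the appendix. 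I would carry this out component by component in the style of the sample calculation at $l_5$, using the valuations of $t$, $t\pm 1$, $t\pm i$, $F$, $G$ on each. Once $\mathcal A,\mathcal D\in\Br(S)$ are in place, $\mathcal A+\mathcal D\in\Br(S)$ is automatic.

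For distinctness and non-triviality I would invoke Lemma~\ref{big}: $\Br(C)[2]=R\sqcup(R+\mathcal Z)$, a disjoint union. The computation above shows $(FG,t+1)_2\in R$, so $\mathcal D=\mathcal Z+(FG,t+1)_2\in R+\mathcal Z$; Proposition~\ref{AA} already gives $\mathcal A\in R\setminus\{0\}$ via $\psi(\mathcal A)=\lambda_{2_*}(((t+1)(t+i),t+1))\ne 0$. Disjointness then forces $\mathcal D\ne 0$ and $\mathcal D\ne\mathcal A$, and $\mathcal A+\mathcal D\in R+\mathcal Z$ so $\mathcal A+\mathcal D\ne 0$; the three classes are pairwise distinct because $\mathcal A,\mathcal D\ne 0$. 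This exhausts $\Br(S)[2]\setminus\{0\}$ by the count from the first paragraph. The genuine obstacle is the middle step: the case-by-case residue computation for $\mathcal D$ along the $24$ vertical components; everything else is formal once the machinery of Theorems~\ref{type theorem}, \ref{cyclic constr} and Lemma~\ref{big} is in hand.
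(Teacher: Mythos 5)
Your proposal is correct and follows essentially the same route as the paper: the count $\Br(S)[2]\cong(\mathbb Z/2)^2$ from $\beta_2-\rho=2$, membership of $\mathcal A$ via Proposition \ref{AA}, the reduction of $\mathcal D\in\Br(S)$ to residue checks along the vertical components via purity (having first placed $\mathcal D$ in $\Br(C)$), and the separation $\mathcal A\in R$, $\mathcal D\in R+\mathcal Z$ using the disjoint union of Lemma \ref{big}. The only work you leave implicit — the component-by-component residue computation for $\mathcal D$ — is exactly what the paper also defers to the worked example and the appendix tables.
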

\begin{proof}
 We know that 
\[\te{Br}(S)\cong(\mathbb Q/\mathbb Z)^{\beta_2-\rho}\]
Since $\beta_2=22$ for any $K3$ surface and $\rho=20$ for a diagonal quartic surface we have that $\te{Br}(S)[2]\cong(\mathbb Z/2)^2$. Therefore it suffices to show that $\mathcal D$ and $\mathcal A$ are two distinct, non-zero elements of $\Br(S)[2]$.

We have that $0\neq\mathcal A\in \te{Br}(S)[2]$ by proposition \ref{AA}.
Let $R$ be as in lemma \ref{big}. It is clear that $\mathcal A\in R$ and $\mathcal D\in R+\mathcal Z$ (cf. proposition \ref{asqw}, $\psi((FG,t+1))=\lambda_{2_*}((t+1,t+1)))$.
This implies (using lemma \ref{big}) that $\mathcal D$ is not equal to $\mathcal A$ and has exact order $2$.
What remains is to show that $\mathcal D\in \te{Br}(S)$. Since $\mathcal D$ is given explicitly we can use Grothendieck's purity theorem for the Brauer group to establish this by straightforward computations (cf. remark after proposition \ref{AA} and the example below).


\end{proof}

\noindent {\bf Example.} Here we show that $\mathcal D$ has trivial residue at $l_1$ and $l_{21}$ (notation as in appendix).

We use the notation of the remark after proposition \ref{AA}. In order to handle the cyclic algebra $\mathcal Z$ we introduce the following: $\partial_{l_j}^{''}$ will denote the residue map corresponding to $\mu_4$ (remember we have fixed an isomorphism $\mu_4\cong\mathbb Z/4$, by sending $i$ to $1$), and $\widetilde{\phantom{A}}$ will denote the composite map 
$$O_{S,l_m}^*\to J_m^* \to H^1(J_m,\mu_4).$$
Note that we have a similar commutative diagram as in the remark after proposition \ref{AA}, corresponding to $\partial_{l_m}^{''}$.
We remind the reader that the properties of the residue maps involved in the calculations we will do can be found in \cite[99.D]{Merk} or \cite[\S 7]{SerreInv}.

For $l_1$: We have that $\textrm{val}_{l_1}(t+1)=0$, and $\ov{t+1}=\ov {1}$. Therefore  $$\partial_{l_1}^{'}((t+1,FG)_2)=1$$
and so a fortiori $\partial_{l_1}((FG,t+1)_2)=1$.

\noindent For $\mathcal Z$ we have:

\[\begin{array}{lll}

\partial_{l_1}^{''}(\mathcal Z)=\partial_{l_1}^{''}((t,t^3s_1s_2)_4) &=&(\partial_{l_1}^{''}((t,t)_4))^3\cdot \partial_{l_1}^{''}((t,s_1s_2)_4)\\
&=&(\widetilde{-1})^3\cdot \widetilde{s_1s_2}\\
&=&\widetilde{\frac{(i-1)^2i}{y^22y^2}}=1

\end{array}\]
Hence $\partial_{l_1}(\mathcal Z)=1$. Therefore $\partial_{l_1}(\mathcal D)=1$ and we showed that $\mathcal D$ has trivial residue at $l_1$.

For $l_{21}$: First we compute $\partial_{l_{21}}^{''}((t,G)_4)$.

\[\begin{array}{lll}

\partial_{l_{21}}^{''}((t,G)_4)=\partial_{l_{21}}^{''}((t,t^3s_1)_4) &=&(\partial_{l_{21}}^{''}((t,t)_4))^3\cdot \partial_{l_1}^{''}((t,s_1)_4)\\
&=&(\widetilde{-1})^3\cdot (\widetilde{s_1})^{-1}\\
&=&(\widetilde{-1})^4=1

\end{array}\]
Let $k$ be any field, $\, a,b\in k^*$, and suppose that $\mu_4 \in k$. Then we clearly have that 
$$  (a,b^2)_4=(a,b)_2\in \Br(k) \quad   \text{(*)} $$
Hence we deduce from the computation above that $\partial_{l_{21}}^{'}((t,G)_2)=1$. This justifies the second equality in the following.
\[\begin{array}{lll}

\partial_{l_{21}}^{'}((t+1,G)_2)&=&\partial_{l_{21}}^{'}((t,G)_2)\cdot \partial_{l_{21}}^{'}((1+\frac{1}{t},G)_2)\\
&=&1\cdot \ov 1=1

\end{array}\]

\noindent By (*) we have that 
$\partial_{l_{21}}^{''}((t,F^2)_4)=\partial_{l_{21}}^{'}((t,F)_2)$. Therefore

$$\partial_{l_{21}}((t,F^2)_4+(t+1,F)_2)=\partial_{l_{21}}^{'}((t(t+1),F)_2)=\partial_{l_{21}}^{'}((1+\frac{1}{t},F)_2)=1$$

\noindent Adding all these up we have established that $\partial_{l_{21}}(\mathcal D)=1$.

\section{$\Br(S)$ over number fields}
In this section we let $L$ be a number field and we denote by $s_L$ the natural map $\te{Br}(S_L)\rightarrow\te{Br}(\overline S)$. 

The aim of this section is to prove the following theorem.

\begin{theorem}\label{qwqw}
Suppose that $i\in L$. Then the $2$-primary torsion subgroup of $\Br(S_L)$ is contained in $\Br_1(S_L)$ 
if and only if $\sqrt[4]2\notin L$.

\end{theorem}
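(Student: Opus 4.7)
The argument is built on the exact sequence
\[\Br(S_L) \xrightarrow{s_L} \Br(\ov S)^{\Gal(\ov L/L)} \xrightarrow{d_2} H^2(L, \Pic(\ov S))\]
coming from the Hochschild--Serre spectral sequence, in which the kernel of $s_L$ is $\Br_1(S_L)$. Thus $\Br(S_L)\{2\} \subseteq \Br_1(S_L)$ is equivalent to $s_L$ having trivial $2$-primary image. Since $i \in L$ and the formulas for $\mathcal A$, $\mathcal D$, $\mathcal A+\mathcal D$ in Theorem~\ref{bigthm} involve only $i$ and rational functions defined over $\mathbb Q$, these three classes are all $\Gal(\ov L/L)$-invariant. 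Hence $\Br(\ov S)[2]^{\Gal(\ov L/L)} = \Br(\ov S)[2]$; by the Skorobogatov--Zarhin finiteness \cite{SkoroK3} of $\Br(S_L)/\Br_1(S_L)$, it suffices to determine which $2$-torsion classes of $\Br(\ov S)$ lie in the image of $s_L$.

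For the ``if'' direction, assume $\sqrt[4]2 \in L$ and consider $\mathcal A_L := (FG,t+1)_2 + (F, t+i)_2$, viewed as an element of $\Br(L(S))$; its image in $\Br(\mathbb C(S))$ is $\mathcal A$, which is nontrivial in $\Br(\ov S)$. By Grothendieck's purity, $\mathcal A_L$ lies in $\Br(S_L)$ if and only if its residues at every prime divisor of $S_L$ vanish. The residue calculations of Section~3 (the remark after Proposition~\ref{AA}, the example following Theorem~\ref{bigthm}, and the tables in the appendix) transfer formally to $L$; their $\mathbb C$-triviality, however, used that $\mathbb C$ contains all squares and fourth powers. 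Over $L$ this must be replaced by specific squareness/fourth-power conditions in the relevant residue fields, and a case-by-case verification shows these are all implied by $\sqrt[4]2 \in L$. Hence $\mathcal A_L \in \Br(S_L)$ and $s_L(\mathcal A_L) = \mathcal A \ne 0$.

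For the ``only if'' direction, assume $\sqrt[4]2 \notin L$ and, for contradiction, take $\alpha \in \Br(S_L)\{2\}$ with $s_L(\alpha) \ne 0$. After multiplication by a power of $2$ we may assume $s_L(\alpha) \in \Br(\ov S)[2] \setminus \{0\}$. Each nonzero $2$-torsion class of $\Br(\ov S)$ has an explicit representative in $\Br(L(S))$ given by the formulas of Theorems~\ref{bigthm} and~\ref{coc}; subtracting the appropriate representative from $\alpha$ yields $\alpha' \in \Br(L(S))$ vanishing in $\Br(\ov L(S))$, so $\alpha'$ is an algebraic Brauer class at the generic point of $S_L$. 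On the other hand $\alpha$ is unramified, while the representative has nontrivial residues at exactly those divisors where the squareness/fourth-power condition from the ``if'' direction fails under $\sqrt[4]2 \notin L$. Hence $\alpha'$ must carry those specific nontrivial residues; showing that no algebraic class in $\Br(L(S))$ can realise them gives the required contradiction.

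\textbf{Main obstacle.} The hard part is the ``only if'' direction: classifying the residues of algebraic Brauer classes on $L(S)$ at the relevant prime divisors and showing that none matches the specific residue pattern forced by the absence of $\sqrt[4]2$. This should rest on Bright's description of the algebraic Brauer group of diagonal quartic surfaces \cite{Br2}, together with a detailed analysis of $\Pic(\ov S)$ as a $\Gal(\ov L/L)$-module.
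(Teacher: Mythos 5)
Your skeleton (reduce to whether the three nonzero classes of $\Br(\ov S)[2]$ lift under $s_L$, then argue each direction) matches the paper's, but both halves have genuine gaps. In the ``if'' direction your claim that the residue conditions for $\mathcal A_L$ itself ``are all implied by $\sqrt[4]2\in L$'' is false: for instance $\partial_{l_{13}}(\mathcal A)=\ov{1+i}$, and $1+i$ is not a square in $\Q(i,\sqrt[4]2)$, since $\sqrt{1+i}=2^{1/4}\zeta_{16}$ and the primitive $16$th root of unity $\zeta_{16}$ does not lie in $\Q(i,\sqrt[4]2)$ (that field is non-abelian over $\Q$ of degree $8$, while $\Q(\zeta_{16})$ is abelian of degree $8$). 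So $\mathcal A$ does not become unramified on $S_L$ even when $\sqrt[4]2\in L$; the paper instead lifts $\ov{\mathcal A}$ by the \emph{corrected} representative $\mathcal B=\mathcal A+(1+i,(t+i)G)_2$ of Definition~\ref{definitionofB} (Proposition~\ref{part2}), i.e.\ one must be allowed to modify the lift by classes that die in $\Br(\ov S)$. Your argument as written would conclude something false about $\mathcal A_L$.

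The ``only if'' direction is where the real content lies, and your proposal defers exactly that content while pointing at the wrong tool. The class $\alpha'=\alpha-(\te{representative})$ is a \emph{ramified} class on $L(S)$ that dies over $\ov L$; it is not an element of $\Br_1(S_L)$, so Bright's computation of the algebraic Brauer group does not control it. The paper's mechanism is different: using the elliptic fibration, Lemmas~\ref{ker1}, \ref{fut} and \ref{fffut} show that \emph{any} preimage of $\ov{\mathcal A}$ in $\Br(S_k)$ must equal $\mathcal A+(a,F)_2+(b,G)_2+l$ in $\Br(k(S))$ with $a,b\in k^*$ and $l\in\Br(k(t))$ (this rests on identifying $\ker\bigl(H^1(k(t),\Pic(\ov C))\to H^1(\ov k(t),\Pic(\ov C))\bigr)$ with $H^1(k,E[4])$ and on the torsor description of Section~3). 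Then Faddeev's exact sequence constrains $l$ by the global reciprocity condition $\sum\cores(c_j)=0$ on its residues $c_j$ over closed points of $\P^1_k$; the residue tables at the $24$ lines force $a\sim1$, $b\sim1+i$, leave nontrivial residues only above $-i$ and $\infty$, and the reciprocity condition then yields $\sqrt2\in k^{*2}$, i.e.\ $\sqrt[4]2\in k$ --- the desired contradiction. Without this classification of preimages and the Faddeev corestriction constraint, your sketch has no way to rule out a lift, so the heart of the theorem is missing.
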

A small remark on notation for this section. In this section $k$ will always be a number field that contains $i$. We have the symbols
\begin{eqnarray}
\mathcal A & = &(FG,t+1)_2+(F,t+i)_2 \nonumber \\
\mathcal D & = &(t,F^2G)_4+(FG,t+1)_2\nonumber \\
\mathcal E=\mathcal A+\mathcal D & = &(t,F^2G)_4+(F,t+i)_2\nonumber
\end{eqnarray}
The symbols $\mathcal A$, $\mathcal D$, $\mathcal E$ can and will be considered as elements of $\Br(k(S))$. We will denote
 $\overline{\mathcal A}=\mathcal A\otimes_{k(S)}\overline k(S)$, ie. the image of $\mathcal A$ under the corresponding restriction map. 
So in the notation of the previous section $\overline{\mathcal A}$ was denoted simply by $\mathcal A$. Similarly for 
$\mathcal D$ and $\mathcal E$. Note that although $\overline{\mathcal A}$ is contained in $\te{Br}(\overline S)$, it is not necessarily true that $\mathcal A$ is contained in $\te{Br}(S)$.

Theorem \ref{qwqw} follows from the following more precise result.

\begin{theorem}\label{bbg1}
Suppose that $i\in L$. Then $\overline{\mathcal A}\in \te{im}(s_L)$ if and only if $\sqrt[4]2\in
L$.

\noindent The same is true if we replace $\mathcal A$ by $\mathcal D$, or by $\mathcal E$.
\end{theorem}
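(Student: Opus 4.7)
The plan is to prove both directions for $\mathcal A$, noting that the arguments for $\mathcal D$ and $\mathcal E$ are parallel, with the essential difference that they involve the degree-$4$ cyclic algebra $(t,F^2G)_4$ rather than a pair of quaternion algebras, so the relevant residues live in $H^1(J,\mu_4)$ rather than $H^1(J,\mu_2)$.

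For the $(\Leftarrow)$ direction, I would assume $i,\sqrt[4]2\in L$ and exhibit an explicit class in $\Br(S_L)$ whose image under $s_L$ is $\overline{\mathcal A}$. The natural first attempt is $\mathcal A$ itself, viewed in $\Br(L(S))$: I would verify via Grothendieck's purity theorem that its residues along every prime divisor of $S_L$ vanish, following the template of the examples after Proposition \ref{AA} and Theorem \ref{bigthm}. Those computations reduced to checking that certain explicit elements (notably $1+i$) are squares in the residue fields $J_l$; over $\mathbb C$ this was automatic, and over $L$ the hypothesis $\sqrt[4]2\in L$ is what supplies the necessary square (or $4$-th power) roots. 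If $\mathcal A$ itself happens to carry some residual ramification over $L$, then I would correct it by an explicit $\gamma\in\ker\bigl(\Br(L(S))\to\Br(\ov L(S))\bigr)$ of quaternion type $(a,f)_2$ with $a\in L^*$, chosen to cancel the offending residues while not affecting the image in $\Br(\ov S)$; again the availability of such $\gamma$ hinges on $\sqrt[4]2\in L$.

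For the $(\Rightarrow)$ direction, suppose $\overline{\mathcal A}=s_L(\mathcal B)$ for some $\mathcal B\in\Br(S_L)$. Then $\mathcal A-\mathcal B\in\ker\bigl(\Br(L(S))\to\Br(\ov L(S))\bigr)$, a group $N$ controlled by the Hochschild--Serre spectral sequence for $\ov L(S)/L(S)$ in terms of the Galois cohomology of $\ov L(S)^*$. Since $\mathcal B$ is unramified on $S_L$, the residue of $\mathcal A$ at any prime divisor $l$ of $S_L$ coincides with the residue of an element of $N$. I would then pick distinguished prime divisors (natural candidates are $l_5$ and $l_{21}$ from the Appendix, which played the key role in the $\mathbb C$-computations) at which the residue of $\mathcal A$ is a computable and non-trivial class (e.g.\ $\ov{1+i}\in J_{l_5}^*/J_{l_5}^{*2}$), and argue that for such a pattern to be simultaneously realized by the residues of a single element of $N$, the field $L$ must contain $\sqrt[4]2$.

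The main obstacle is precisely this $(\Rightarrow)$ direction: one has to rule out every possible lift $\mathcal B$, not just $\mathcal B=\mathcal A$, and this requires a sharp description of the residues attainable by elements of $N$. Technically, this reduces to a Galois-cohomological computation using the explicit lattice structure of $\te{Pic}(\ov S)$ (known for diagonal quartics) and the known action of $G_L$ on it. A cleaner reformulation — and probably the cleanest way to execute the argument — is to translate the question into the vanishing of the Hochschild--Serre obstruction of $\overline{\mathcal A}$ in $H^2(G_L,\te{Pic}(\ov S))$, and then show by direct cocycle analysis that this obstruction is killed exactly upon enlarging $L$ by $\sqrt[4]2$.
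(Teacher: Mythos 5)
Your $(\Leftarrow)$ direction matches the paper's: $\mathcal A$ is indeed ramified over $L$ and the paper corrects it by exactly the kind of term you predict, namely $\mathcal B=\mathcal A+(1+i,(t+i)G)_2$ (Definition \ref{definitionofB}, Proposition \ref{part2}), with $\sqrt[4]2\in L$ entering to kill residues such as $\sqrt2$ on the lines above $t=-i$. You also omit, but could easily supply, the paper's preliminary reduction to the case $\sqrt2\in L$ (Theorem \ref{bbg2}), which is what makes $\te{Gal}(\ov k/k)$ act trivially on $\Pic(\ov S)$ and puts $E$, $C$ over $k(t)$ with $E(k(t))=E[4]$.

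The genuine gap is in the $(\Rightarrow)$ direction, which is the heart of the theorem. Saying that the residues of $\mathcal A$ must ``coincide with the residues of an element of $N=\ker(\Br(L(S))\to\Br(\ov L(S)))$'' is true but has no force as stated: at any single divisor the residues attainable by elements of $N$ are essentially arbitrary (take $(a,\pi)_2$ for a local uniformizer $\pi$), so no finite list of distinguished divisors can by itself produce a contradiction. What is missing is the structural description of the possible lifts that the paper extracts from the elliptic fibration: via Lemmas \ref{ker1}--\ref{fffut} (using $E(k(t))=E[4]$ and the computation of $\ker\bigl(H^1(k(t),\Pic(\ov C))\to H^1(\ov k(t),\Pic(\ov C))\bigr)$), any $x\in\Br(S_k)$ with $s_k(x)=\ov{\mathcal A}$ must equal $\mathcal A+(a,F)_2+(b,G)_2+\phi^*(l)$ with $a,b\in k^*$ and $l\in\Br(k(t))$; and then Faddeev's exact sequence imposes the reciprocity constraint $\sum_j\te{cores}(c_j)=0$ on the residues $c_j$ of $l$. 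It is this global corestriction condition --- not a local computation at $l_5$ or $l_{21}$ --- that yields the contradiction: after the residue tables force $a\sim1$, $b\sim1+i$, one gets $c_{-i}+c_\infty=\sqrt2\neq1$ in $H^1(k,\Z/2)$ precisely because $\sqrt[4]2\notin k$. Your fallback reformulation via the differential $\Br(\ov S)^{G_L}\to H^2(G_L,\Pic(\ov S))$ is legitimate in principle (the $d_3$ obstruction dies since $H^3(G_L,\ov L^*)=0$ for a number field), but you give no way to compute that class, and ``direct cocycle analysis'' on a rank-$20$ lattice is not a plan; the fibration-plus-Faddeev argument is the mechanism that actually closes the proof.
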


We will prove the following equivalent theorem.
\begin{theorem}\label{bbg2}
Suppose that $i,\sqrt2\in L$. Then $\overline{\mathcal A}\in \te{im}(s_L)$ 
if and only if $\sqrt[4]2\in
L$. The same is true if we replace $\mathcal A$ by $\mathcal D$, or by $\mathcal E$.

\end{theorem}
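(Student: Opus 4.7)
The plan is to prove Theorem \ref{bbg2} by residue computations on the vertical components of the fibration $\phi:S\to\mathbb P^1$, adapting the calculations of Section $3$ to the number-field setting. Because the torsor-based construction of Section $3$ is defined over any field containing $i$, the symbols $\mathcal A,\mathcal D,\mathcal E\in\Br(L(S))$ lie in $\Br(C_L)\subseteq\Br(L(S))$; Grothendieck's purity then reduces the question $\mathcal A\in\Br(S_L)$ (and similarly for $\mathcal D,\mathcal E$) to the vanishing of residues at the vertical prime divisors of $S_L$, i.e.\ at the $\te{Gal}(\overline L/L)$-orbits of the $24$ geometric lines recorded in the appendix.

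For the direction $\sqrt[4]2\in L\Rightarrow\overline{\mathcal A}\in\te{im}(s_L)$: I show $\mathcal A,\mathcal D,\mathcal E\in\Br(S_L)$, which suffices since $s_L$ then sends them to the corresponding classes in $\Br(\overline S)$. On each Galois orbit of vertical lines the residue calculation parallels the examples after Proposition \ref{AA} and Theorem \ref{bigthm} and reduces to showing that a class $\widetilde{a}\in H^1(\kappa(D),\mu_4)$ or $\overline{a}\in H^1(\kappa(D),\mu_2)$ is trivial, where $a$ is a product of powers of $2$, $i$, $\sqrt 2$, $\sqrt[4]2$ and manifest fourth powers. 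Since $L\supseteq\mathbb Q(i,\sqrt 2,\sqrt[4]2)$, every such $a$ is a fourth power in $\kappa(D)$ and all residues vanish.

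For the converse, assume $\sqrt[4]2\notin L$ and suppose for contradiction that some $\beta\in\Br(S_L)$ satisfies $s_L(\beta)=\overline{\mathcal A}$. Then $\mathcal A-\beta$ vanishes on base change to $\overline L$, so for each prime divisor $D$ of $S_L$ one has $\partial_D(\mathcal A)=\partial_D(\mathcal A-\beta)$ (using $\partial_D(\beta)=0$), lying in the kernel of $H^1(\kappa(D),\mathbb Q/\mathbb Z)\to H^1(\overline{\kappa(D)},\mathbb Q/\mathbb Z)$. The strategy is to exhibit a vertical divisor $D_0$ on $S_L$ whose geometric pieces are interchanged by the non-trivial element of $\te{Gal}(L(\sqrt[4]2)/L)$, and to compute $\partial_{D_0}(\mathcal A)$ directly along the lines of the example after Theorem \ref{bigthm}: exploiting the identity $(i-1)^2i=2$, the residue is represented by a class whose non-triviality in $H^1(\kappa(D_0),\mu_2)$ is equivalent to $\sqrt 2$ failing to be a square in $\kappa(D_0)$, which, by the choice of $D_0$, is equivalent to $\sqrt[4]2\notin L$. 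I then verify that this class does not lie in the inflation image from the relative Galois cohomology that controls $\ker(\Br(L(S))\to\Br(\overline L(S)))$, producing the contradiction. The same $D_0$ deals with $\mathcal D$ and $\mathcal E$: their additional summands $(t,F^2G)_4$ and $(FG,t+1)_2$ have residues at $D_0$ which are already fourth powers (respectively squares) by the computations after Theorem \ref{bigthm}, so the obstructing residue transfers verbatim.

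The hard part is pinning down the witness $D_0$ among the $24$ geometric lines and describing $\kappa(D_0)$ precisely enough to conclude that $\sqrt[4]2\notin L$ obstructs $\partial_{D_0}(\mathcal A)$ being in the inflation kernel. The explicit appendix table of lines and their residue fields is indispensable here, and the identity $(i-1)^2i=2$ from the example following Theorem \ref{bigthm} signals exactly where the factor of $2$ controlling the obstruction lives; once $D_0$ is fixed the calculation is mechanical, with the essential change being that $\sqrt 2$ is no longer a square in the smaller residue field when $\sqrt[4]2\notin L$.
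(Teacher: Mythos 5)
Your overall framing (purity plus residues along the vertical divisors of $\phi$) is the right starting point, but both directions of your argument have genuine gaps. For the ``if'' direction, the claim that $\mathcal A\in\Br(S_L)$ whenever $\sqrt[4]2\in L$ is false: the residue of $\mathcal A$ at the line $l_5$ ($x=z$, $y=w$, over $t=1$) is the class of $1+i$ in $H^1(\kappa(l_5),\mu_2)$, and $1+i=\sqrt2\cdot\zeta_8$ is \emph{not} a square in $M=\mathbb Q(i,\sqrt[4]2)$ (that would force $\zeta_{16}$ into $M$, whereas the maximal abelian subextension of the $D_4$-extension $M/\mathbb Q$ is only $\mathbb Q(\zeta_8)$). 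The same happens for $\mathcal E$; only $\mathcal D$ happens to lift unchanged. Your assertion that every residue is a product of powers of $2,i,\sqrt2,\sqrt[4]2$ up to fourth powers overlooks exactly these $1+i$ factors, and this is why the paper lifts $\overline{\mathcal A}$ not by $\mathcal A$ itself but by the corrected class $\mathcal B=\mathcal A+(1+i,(t+i)G)_2$ of definition \ref{definitionofB} (proposition \ref{part2}): the correction is algebraic, hence invisible over $\overline L$, and its residues cancel the offending $1+i$'s.

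The ``only if'' direction is where the proposal breaks down more seriously. A non-trivial residue of $\mathcal A$ at a single vertical divisor $D_0$ cannot obstruct the existence of \emph{some} $\beta\in\Br(S_L)$ with $s_L(\beta)=\overline{\mathcal A}$: by Faddeev's exact sequence one can choose $l\in\Br(L(t))$ with an essentially arbitrary residue at the closed point of $\mathbb P^1$ under $D_0$ (subject only to the global corestriction condition), and $\phi^*(l)$ is algebraic and cancels $\partial_{D_0}(\mathcal A)$; so no single-divisor computation, however carefully the witness is chosen, can yield a contradiction. (Moreover all $24$ lines are already defined over $\mathbb Q(\zeta_8)\subseteq L$, so there is no vertical divisor whose components are permuted by $\Gal(L(\sqrt[4]2)/L)$; the dependence on $\sqrt[4]2$ enters only through whether $\sqrt2$ is a square in the residue fields $L(u)$.) The paper's argument is necessarily global: one first shows (lemmas \ref{ker1}--\ref{fffut}, via the identification of $\ker\bigl(H^1(L(t),\Pic(\overline C))\to H^1(\overline L(t),\Pic(\overline C))\bigr)$ with $H^1(L,E[4])$ and proposition \ref{asqw}) that any preimage of $\overline{\mathcal A}$ has the form $\mathcal A+(a,F)_2+(b,G)_2+l$ with $a,b\in L^*$, $l\in\Br(L(t))$; the residue tables then force $a\sim1$, $b\sim1+i$, hence force the Faddeev characters $c_\infty=1+i$ and $c_{-i}=\sqrt2(1+i)$, and the contradiction is that $c_\infty+c_{-i}=\sqrt2\neq0$ in $H^1(L,\Z/2)$ when $\sqrt[4]2\notin L$ --- a violation of the reciprocity condition $\sum\te{cores}(c_j)=0$, not of any local condition. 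Finally, ``transfers verbatim'' for $\mathcal D$ and $\mathcal E$ is not right either: their residue tables differ genuinely from that of $\mathcal A$ (compare the entries at $l_{13}$: $(1+i)b$ versus $\sqrt2\,b$), and each case ends in a slightly different contradiction.
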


\begin{proof}
Proposition \ref{neothm} and proposition \ref{part2}.
\end{proof}

Proof of equivalence:

\noindent Theorem \ref{bbg1} $\Rightarrow$ Theorem \ref{bbg2}: clear.

\noindent Theorem \ref{bbg2} $\Rightarrow$ Theorem \ref{bbg1}: If $\sqrt[4]
2\in L$, apply theorem \ref{bbg2}. If
$\sqrt[4]
2\notin L$, apply theorem \ref{bbg2} to $L(\sqrt 2)$. To conclude the proof we just need to note that $\sqrt[4]2\notin
L\Rightarrow \sqrt[4]2\notin
L(\sqrt 2)$, and that  $s_L$ factors through $s_{L(\sqrt
2)}$.

For the rest of this section we will additionally assume that $\sqrt2\in k$. Under these assumptions the following are obviously true:
Both  $C$ and $E$ are defined over $k(t)$, $E=$Jac$(C)$, $E(k(t))=E[4]$. $\mathcal A$, $\mathcal D$ and $\mathcal E$ belong to $\Br(C)$ and have order $2$. Pic$(\overline S)$ has trivial Gal$(\overline k/k)$ action.
From the last assertion and since Gal$(\overline k/k)$ is profinite and Pic$(\overline S)$ is free we have that $H^1(k,\te{Pic}(\overline S))$ is trivial. Therefore we deduce that $\Br_1(S)=\Br(k)$ from the Hochschild-Serre spectral sequence.
\begin{lemma}\label{ker1}
The following sequence is exact
\begin{eqnarray}
1\rightarrow H^1(k,E[4])\xrightarrow{m} H^1(k(t),\textrm{Pic}(\overline C))\xrightarrow{\textrm{Res}} H^1(\overline
k(t),\textrm{Pic}(\overline C)) \nonumber
\end{eqnarray}
where $m$ is defined as follows: The group $\textrm{Gal}(\overline k/k)$ is naturally isomorphic to $\textrm{Gal}(\overline k(t)/k(t))$. We denote by $m$ the composite of the inflation map followed by the map induced by the natural injection of $E[4]$ in $\textrm{Pic}(\overline C)$.

\end{lemma}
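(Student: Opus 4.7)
My plan is to apply inflation-restriction to the normal subgroup $H = \textrm{Gal}(\overline{k(t)}/\overline k(t))$ of $G = \textrm{Gal}(\overline{k(t)}/k(t))$, noting that the quotient is naturally identified with $\textrm{Gal}(\overline k/k)$ as stated in the lemma. This yields
\[0 \to H^1(\textrm{Gal}(\overline k/k),\textrm{Pic}(\overline C)^H) \xrightarrow{\textrm{inf}} H^1(k(t),\textrm{Pic}(\overline C)) \xrightarrow{\textrm{Res}} H^1(\overline k(t),\textrm{Pic}(\overline C)),\]
so it suffices to produce a $\textrm{Gal}(\overline k/k)$-equivariant decomposition $\textrm{Pic}(\overline C)^H \cong E[4] \oplus \Z$ with trivial action on the $\Z$ summand. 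Granted this, $H^1(\textrm{Gal}(\overline k/k),\textrm{Pic}(\overline C)^H) \cong H^1(k,E[4])$ because $H^1(k,\Z)=0$ (there are no nonzero continuous homomorphisms from a profinite group to the discrete torsion-free group $\Z$), and the natural inclusion $E[4]\hookrightarrow \textrm{Pic}(\overline C)$ followed by inflation is visibly $m$.

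To produce the decomposition I would take $H$-invariants of
\[0 \to E(\overline{k(t)}) \to \textrm{Pic}(\overline C) \to \Z \to 0,\]
so that $\textrm{Pic}^0(\overline C)^H = E(\overline k(t))$ and the image of the degree map lies in $n\Z$ for some $n\geq 1$. The $k(t)$-rational divisor of degree $2$ on $C$ introduced at the start of Section $3$ (realising $C$ as a double cover of $\mathbb P^1$) forces $n\mid 2$; conversely, $C$ remains a non-trivial $2$-covering of $E$ over $\overline k(t)$ (by the argument of Proposition \ref{kernel}, $(1,t)$ still lies outside the image of $E(\overline k(t))/2$ in $H^1(\overline k(t),E[2])$), so $C(\overline k(t))=\emptyset$ and $C$ admits no odd-degree divisor over $\overline k(t)$, giving $n=2$. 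The same rational degree-$2$ divisor supplies a $\textrm{Gal}(\overline k/k)$-equivariant section of $\textrm{Pic}(\overline C)^H\to 2\Z$, splitting the extension.

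The remaining and most delicate ingredient is the equality $E(\overline k(t)) = E[4]$ with trivial $\textrm{Gal}(\overline k/k)$-action. Rank zero follows from the Shioda-Tate formula applied to the Jacobian fibration associated to $\phi$: with $\rho(\overline S)=20$ and six singular fibres all of type $I_4$, the formula yields Mordell-Weil rank $20 - 2 - \sum(m_v-1) = 0$. For the torsion, $E[4]\subset E(k(t))$ by the remark after Lemma \ref{lklk}, and additional torsion over $\overline k(t)$ is excluded by specialising at a smooth fibre of $\phi$ under the constraints imposed by the component groups at the six $I_4$ fibres. The hard part will be precisely this geometric Mordell-Weil computation; once it is in hand, the rest of the argument—the inflation-restriction setup, the splitting from the degree-$2$ divisor, and the vanishing $H^1(k,\Z) = 0$—is essentially formal.
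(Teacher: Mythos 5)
Your proof is correct and follows essentially the same route as the paper's: inflation--restriction reduces the claim to computing $H^1(\textrm{Gal}(\overline k/k),\textrm{Pic}(\overline C)^N)$, which the paper likewise handles via the exact sequence $1\to E(\overline k(t))\to \textrm{Pic}(\overline C)^N\to 2\mathbb Z\to 1$ (the $2\mathbb Z$ coming from the absence of a $\overline k(t)$-point together with a Galois-fixed degree-$2$ divisor, which also kills the relevant boundary maps exactly as your splitting does), combined with $H^1(G,2\mathbb Z)=0$ and $E(\overline k(t))=E[4]$. The only divergence is that the paper simply quotes $E(\overline k(t))=E[4]$ (remark after lemma \ref{lklk}, with references to Shafarevich--Piatetskii-Shapiro and Swinnerton-Dyer), so the Shioda--Tate and torsion computation you single out as the hard part is not needed if you are willing to cite that fact.
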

\begin{proof}
Let 
\[N=\textrm{Gal}(\overline{k(t)}/\overline
k(t)),\quad \Gamma=\textrm{Gal}(\overline{k(t)}/k(t)), \quad G=\textrm{Gal}(\overline k(t)/k(t))=\textrm{Gal}(\overline k/k) \]

 By the inflation-restriction sequence, the kernel of the restriction map is $H^1(G,\textrm{Pic}(\overline C)^N)$. We
 will show that the natural map 
\[H^1(G,E[4])\rightarrow H^1(G,\textrm{Pic}(\overline C)^N)
\]
is an isomorphism.

Note that $D=(1:1:\sqrt t:\sqrt t)+(1:1:-\sqrt t:-\sqrt t)$ is a divisor on $\overline C$, which is
fixed by $\Gamma$ and has degree $2$ (remember that we do not distinguish $ C$ from $ C'$).

\noindent The short exact sequence of $N$-modules
\begin{eqnarray}
 1\rightarrow E(\overline{k(t)})\rightarrow \te{Pic}(\overline C)\rightarrow
 \mathbb Z \rightarrow 1 \nonumber
\end{eqnarray}

\noindent gives rise to the following exact sequence of $G$-modules
\begin{eqnarray}\label{pell}
 1\rightarrow E(\overline k(t))\rightarrow \te{Pic}(\overline C)^N\rightarrow
 2\mathbb Z \rightarrow 1 
\end{eqnarray}
 Note that for a curve of genus $1$ the non-existence of a rational point implies that there is no divisor class of degree $1$ fixed by the absolute Galois group of the ground field (cf. exact sequence (\ref{cg1})). That is why we do not have $\mathbb Z$ on the right. We have $2\mathbb Z$ because of $D$.

Part of the long exact sequence associated to (\ref{pell}) is
\begin{eqnarray}\label{pellares}
\te{Pic}(\overline C)^{\Gamma}\rightarrow2\mathbb Z\rightarrow H^1(G,E(\overline k(t)))\xrightarrow{\lambda} H^1(G,\te{Pic}(\overline C)^N)\rightarrow
 H^1(G,2\mathbb Z) \nonumber
\end{eqnarray}
Because of $D$ the left hand map is surjective, and so $\lambda$ is injective.
Since $G$ acts trivially on $2\mathbb Z$ we have that $H^1(G,2\mathbb Z)=1$,
and so $\lambda$ is surjective. Therefore $\lambda$ is an isomorphism. We conclude by noting that $E(\overline k(t))=E[4]$.

\end{proof}

\begin{lemma}\label{fut}
Let $x\in \Br(S_k)$.

 If $s_k(x)=\overline{\mathcal A}$, then
\[x=\alpha + \mathcal A\quad \te{in} \  H^1(k(t),\te{Pic}(\overline C)),\, \te{for some} \ \alpha \in  H^1(k,E[2])
\]
\noindent The same is true if we replace $\mathcal A$ by $\mathcal D$, or by $\mathcal E$.

\end{lemma}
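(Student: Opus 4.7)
The plan is to pass to $H^1(k(t),\te{Pic}(\overline C))$ via the map $\psi$ from the Hochschild-Serre spectral sequence for $C/k(t)$, identifying each of $\mathcal A$, $\mathcal D$, $\mathcal E$, and $x$ with its $\psi$-image. Setting $y:=\psi(x)-\psi(\mathcal A)$, the goal is to realise $y$ in the image of the natural map $m_2:H^1(k,E[2])\to H^1(k(t),\te{Pic}(\overline C))$ built exactly as in Lemma \ref{ker1} but with $E[4]$ replaced by $E[2]$ (inflation composed with the map induced by $E[2]\hookrightarrow\te{Pic}(\overline C)$). Once this is done, $\psi(x)=\psi(\mathcal A)+m_2(\alpha)$ is precisely the stated conclusion.

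The first step is to check that $y$ has trivial restriction to $H^1(\overline k(t),\te{Pic}(\overline C))$. By functoriality of Hochschild-Serre under base change $k\subset\overline k$, restriction sends $\psi(x)$ to $\psi(s_k(x))=\psi(\overline{\mathcal A})$ and $\psi(\mathcal A)$ to $\psi(\overline{\mathcal A})$, so the difference is $0$, and Lemma \ref{ker1} then yields a unique $\beta\in H^1(k,E[4])$ with $m(\beta)=y$. The second step is to show $2\beta=0$. Since $s_k(x)=\overline{\mathcal A}$ has order $2$, $s_k(2x)=0$, i.e.\ $2x\in\Br_1(S_k)=\Br(k)$, using the equality $\Br_1(S_k)=\Br(k)$ recorded just before Lemma \ref{ker1}. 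As constant algebras vanish under $\psi$, one gets $2\psi(x)=0$; combined with $2\psi(\mathcal A)=0$ (as $\mathcal A\in\Br(C)[2]$) this gives $2y=0$, and injectivity of $m$ in Lemma \ref{ker1} then yields $2\beta=0$.

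To finish, I would use that $i,\sqrt 2\in k$ forces $E[4]\subset E(k)$, so $\te{Gal}(\overline k/k)$ acts trivially on $E[4]$ and $H^1(k,E[4])=\Hom(\te{Gal}(\overline k/k),E[4])$. A continuous homomorphism killed by $2$ takes values in $E[2]$, so $\beta$ is the image of some $\alpha\in H^1(k,E[2])$ under the inclusion-induced map $H^1(k,E[2])\to H^1(k,E[4])$; the compatibility $\lambda_4|_{E[2]}=\lambda_2$ translates this into $m(\beta)=m_2(\alpha)$, which is the required equality. The same argument goes through verbatim for $\mathcal D$ and $\mathcal E$, since only their $2$-torsion in $\Br(C)$ and their images in $\Br(\overline S)$ are used. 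The main subtlety I anticipate is the base-change compatibility of $\psi$ in step one, namely keeping careful track of the Hochschild-Serre edge maps under restriction from $\te{Gal}(\overline{k(t)}/k(t))$ to $\te{Gal}(\overline{k(t)}/\overline k(t))$; once this functoriality is in hand the rest reduces to a short diagram chase using Lemma \ref{ker1} and the triviality of the Galois action on $E[4]$.
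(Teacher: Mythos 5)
Your argument is correct and follows essentially the same route as the paper's proof: subtract $\mathcal A$, use Lemma \ref{ker1} to place the difference in $H^1(k,E[4])$, show it is killed by $2$ via $s_k(2x)=0$, $\Br_1(S_k)=\Br(k)=\ker\psi$, and then descend to $H^1(k,E[2])$ using the triviality of the Galois action on the $4$-torsion. The only (harmless) slip is that $E$ is defined over $k(t)$, so one should write $E[4]\subset E(k(t))$ rather than $E[4]\subset E(k)$; since the acting group is $\Gal(\overline{k}(t)/k(t))\cong\Gal(\overline k/k)$, the action is still trivial, which is all your final step needs.
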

\begin{proof}
We will prove the statement for  $\mathcal A$. The statements for $\mathcal D$ and $\mathcal E$ are proved in a similar way.
 
We remind the reader some of the maps we will be implicitly using:
$$\Br(S_k)\hookrightarrow \Br(C) \hookrightarrow \Br(k(S))$$
$$\Br(C)\xrightarrow{\psi} H^1(k(t),\textrm{Pic}(\overline C))$$
$$H^1(k,E[2])\rightarrow H^1(k,E[4])\xrightarrow{m} H^1(k(t),\textrm{Pic}(\overline C)) $$
where $\psi$ is the map from the Hochschild-Serre spectral sequence, the left hand map in the third line is induced by the injection of $E[2]$ into E[4], and $m$ was defined in lemma \ref{ker1}.

Let $y=x-\mathcal A\in \Br(C)$, and $y'=\psi(y)$. By our assumptions $y'$ is mapped to zero in $H^1(\overline k(t),\te{Pic}(\overline C))$. Hence $y'\in H^1(k,E[4])$ by lemma \ref{ker1}. Elements of $ H^1(k,E[4])$ that are killed by $2$ come from elements of  $H^1(k,E[2])$. Therefore it suffices to show that $2y'=0$.
We have
\[2y=2x\in \te{Br}_1(S_k)=\te{Br}(k)\subset \te{Br}(k(t)).\]
 Since $\te{Br}(k(t))=\textrm{ker}(\psi)$, we
 deduce that $2y'=0$ in $H^1(k(t),\te{Pic}(\overline C))$.

\end{proof}

\begin{lemma}\label{fffut}
Let $x\in Br(S_k)$.

 If  $s_k(x)=\overline{\mathcal A}$, then 
\begin{eqnarray}
x=\mathcal A +(a,F)_2+(b,G)_2+l \ \te{in} \  \Br(k(S)),\, \te{some} \ a,b \in k^*, \ l \in \Br(k(t)).\nonumber
\end{eqnarray} 

 \noindent The same is true if we replace $\mathcal A$ by $\mathcal D$, or by $\mathcal E$.

\end{lemma}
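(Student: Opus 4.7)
The plan is to combine Lemma~\ref{fut} with the explicit formula of Proposition~\ref{asqw} and the Hochschild--Serre description of $\ker(\psi)$. Given $x \in \Br(S_k)$ with $s_k(x) = \overline{\mathcal A}$, set $y = x - \mathcal A \in \Br(C)$. By Lemma~\ref{fut} there exists $\alpha \in H^1(k, E[2])$ such that $\psi(y) = \lambda_{2*}(\alpha)$ in $H^1(k(t),\textrm{Pic}(\overline C))$, with $\alpha$ inflated from $k$ to $k(t)$.

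Now identify $\alpha$ via the isomorphism $\beta \colon \mu_2^2 \xrightarrow{\sim} E[2]$ from~(\ref{ttof}) with a pair $(a,b) \in (k^*/k^{*2})^2$. Under the running hypotheses $i, \sqrt{2} \in k$, all the ingredients of Proposition~\ref{asqw}---the torsor $T/C$, the functions $F$ and $G$, and the type computations of Theorem~\ref{the types}---are already defined over $k(t)$, so the formula of that proposition applies verbatim. Hence the class
\[
z := (F,a)_2 + (G,b)_2 \in \Br(C)
\]
satisfies $\psi(z) = \lambda_{2*}(\alpha)$, and therefore $\psi(y - z) = 0$.

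Finally, the Hochschild--Serre spectral sequence yields the exact sequence
\[
\Br(k(t)) \longrightarrow \Br(C) \xrightarrow{\ \psi\ } H^1(k(t), \textrm{Pic}(\overline C))
\]
(using $\Br(\overline C)=0$ by Tsen, so $\Br_1(C)=\Br(C)$). Thus $y - z$ comes from some $l \in \Br(k(t))$, which yields $x = \mathcal A + (F,a)_2 + (G,b)_2 + l$ in $\Br(k(S))$, the required form since $(F,a)_2 = (a,F)_2$ as classes of quaternion algebras. The statements for $\mathcal D$ and $\mathcal E$ follow by the identical argument, as Lemma~\ref{fut} applies uniformly to all three classes. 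There is no serious obstacle: the only bookkeeping point is transferring Proposition~\ref{asqw} from $\mathbb{C}(t)$ to $k(t)$, which is immediate from inspection of its proof.
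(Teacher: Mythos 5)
Your argument is correct and follows essentially the same route as the paper: apply Lemma \ref{fut} to get $\alpha=(a,b)\in H^1(k,E[2])$, use Proposition \ref{asqw} (valid over $k(t)$ under the running hypotheses) to realise $\lambda_{2*}(\alpha)$ as $\psi((a,F)_2+(b,G)_2)$, and absorb the difference into $\Br(k(t))=\ker\psi$. Nothing to add.
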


\begin{proof}
Note that
\begin{enumerate}
 \item Let $\alpha=(a,b)\in H^1(k,E[2])$. The image of $\alpha$ in $H^1(k(t),\te{Pic}(\overline C))$ 
coincides with the image of $(a,F)_2+(b,G)_2$ (cf. proposition \ref{asqw}).
\item $\te{Br}(k(t))=\te{Ker}(\te{Br}(C)\rightarrow H^1(k(t),\te{Pic}(\overline C)))$.
\end{enumerate}
The result follows from lemma \ref{fut}.
\end{proof}

\begin{proposition}\label{neothm}

Suppose that $\sqrt[4]{2}\notin k$.
\begin{enumerate} 
\item There is no element $x\in \te{Br}(S_k)$
such that $x \mapsto \overline{\mathcal A}$.

\item There is no element $x\in \te{Br}(S_k)$
such that $x \mapsto \overline{\mathcal D}$.

\item There is no element $x\in \te{Br}(S_k)$
such that $x \mapsto \overline{\mathcal E}$.

\end{enumerate}
\end{proposition}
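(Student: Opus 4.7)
The plan is to prove each part by contradiction, combining Lemma \ref{fffut} with residue calculations in the spirit of the computation carried out after Theorem \ref{bigthm}.

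For part (1), suppose some $x\in \Br(S_k)$ satisfies $s_k(x)=\ov{\mathcal A}$. By Lemma \ref{fffut} we may write
\[
x=\mathcal A+(a,F)_2+(b,G)_2+l,\qquad a,b\in k^*,\ l\in\Br(k(t)).
\]
Since $\mathcal A\in\Br(\ov S)$ by Theorem \ref{bigthm} and being unramified is insensitive to extension of the base field (the residue at a prime divisor $D$ of $S_k$ embeds into the residues at the components of $D_{\ov k}$), the class $\mathcal A$ lies in $\Br(S_k)$. Hence the correction $y=(a,F)_2+(b,G)_2+l$ must also be unramified on $S_k$. Horizontal divisors are harmless since $y\in \Br(C)$, so the conditions come from the $24$ components of the six singular fibres of $\phi$. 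At each such component $V$ lying over $t=t_0$, $l\in\Br(k(t))$ contributes a residue depending only on $t_0$, while $a,b\in k^*$ are units at $V$ and hence $(a,F)_2$ and $(b,G)_2$ contribute $\ov{a}^{v_V(F)}$ and $\ov{b}^{v_V(G)}$ in $\kappa(V)^*/\kappa(V)^{*2}$. Comparing pairs of components in the same fibre cancels the $l$-contribution, and the valuations of $F$ and $G$ recorded in the appendix turn the unramifiedness of $y$ into a system of equations in $(a,b)\in (k^*/k^{*2})^2$. The claim is that this system is solvable over $k$ only when $2$ becomes a square in a specific residue field whose minimal field of definition over $k$ contains $\sqrt[4]{2}$, so that solvability forces $\sqrt[4]{2}\in k$ and contradicts the hypothesis.

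For parts (2) and (3), Lemma \ref{fffut} similarly reduces the question to unramifiedness of a class of the shape $\mathcal D+(a,F)_2+(b,G)_2+l$ or $\mathcal E+(a,F)_2+(b,G)_2+l$. The cyclic piece $\mathcal Z=(t,F^2G)_4$ now contributes through $\mu_4$-residues, and the calculation at $l_1$ after Theorem \ref{bigthm} already gives $\partial_{l_1}^{''}(\mathcal Z)=\widetilde{(i-1)^2 i/(2y^4)}$, exhibiting an explicit factor of $2$ inside the symbol. Choosing a component where this $\mu_4$-residue survives after subtracting every possible quaternion correction $(a,F)_2+(b,G)_2+l$ forces $2$ to be a fourth power in the residue field; together with the standing assumption $\sqrt 2\in k$ this forces $\sqrt[4]{2}\in k$, again a contradiction.

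The main obstacle is organisational rather than conceptual: one must select, for each of the three classes, the correct components $l_j$ so that the system of residue equations collapses to a clean statement about the square class (respectively the fourth-power class) of $2$ in $k$. The appendix tables, which already form the backbone of the proof of Theorem \ref{bigthm}, supply the valuations and residue-field data needed to carry out this bookkeeping.
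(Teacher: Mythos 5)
Your reduction via Lemma \ref{fffut} and the idea of comparing residues at the components of the six degenerate fibres are the right starting points, but the argument breaks at its first substantive step. You assert that since $\overline{\mathcal A}\in\Br(\overline S)$, the class $\mathcal A$ itself lies in $\Br(S_k)$, on the grounds that ``the residue at a prime divisor $D$ of $S_k$ embeds into the residues at the components of $D_{\overline k}$.'' This is false: the residue at a component $l_j$ lives in $H^1(\kappa(l_j),\mathbb Z/2)=\kappa(l_j)^*/\kappa(l_j)^{*2}$, and the restriction to $\overline k(l_j)$ annihilates every constant class. The residues of $\mathcal A$ at the $l_j$ are exactly such constants ($i$, $1+i$, $\sqrt2(1-i)$, \dots, as in the tables), so $\mathcal A$ is genuinely ramified over $k$ while $\overline{\mathcal A}$ is unramified over $\overline k$. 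Worse, the assertion $\mathcal A\in\Br(S_k)$ directly contradicts part (1) of the proposition you are proving, since then $s_k(\mathcal A)=\overline{\mathcal A}$. The correct procedure is to compute the residues of the whole class $x_1=\mathcal A+(a,F)_2+(b,G)_2$ at all $24$ components, not to split off $\mathcal A$ as harmless.

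There is a second, independent gap: you never invoke Faddeev's exact sequence for $\Br(k(t))$, and in particular the corestriction condition $\sum\te{cores}(c_j)=0$ on the residues $c_j$ of $l$. For part (1) this is fatal, because the fibre-by-fibre consistency conditions (equality of $\partial_Y(x_1)$ for components $Y$ of the same fibre) are solvable for any $k$ containing $i$ and $\sqrt2$ --- they only force $a\sim1$ and $b\sim1+i$ --- so ``comparing pairs of components in the same fibre'' produces no contradiction by itself. The contradiction in part (1) arises only after one observes that the induced residues of $l$ at $t=-i$ and $t=\infty$ are $\sqrt2(1+i)$ and $1+i$, whose product $\sqrt2$ must be trivial in $H^1(k,\mathbb Z/2)$ by the Faddeev relation, forcing $\sqrt[4]2\in k$. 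For parts (2) and (3) the fibre comparisons do suffice, but the mechanism is that $b$ must simultaneously be $\sim i$ and $\sim\sqrt2$ (hence $\sqrt2\in k^{*2}$ since $i=\zeta_8^2$ is a square in $k$), not that ``$2$ becomes a fourth power in the residue field''; your sketch does not carry out any of the needed computations, which are the actual content of the proof.
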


\begin{proof}

 We introduce the theoretical part of the proof. This is utilising Faddeev's exact sequence for $\Br(k(t))$ (see \cite[\S1.2]{CT}):
\begin{eqnarray}
1\rightarrow \te{Br}(k)\rightarrow \te{Br}(k(t))\rightarrow\bigoplus_{M} H^1(k( M) ,  \mathbb{Q}/\mathbb{Z}  )\rightarrow H^1(k,\mathbb
Q/\mathbb Z)\rightarrow
1
\end{eqnarray}
\noindent Here $M$ runs through the points of codimension $1$ of $\mathbb
P^1_k$, the maps $\te{Br}(k(t))\rightarrow H^1(k( M) ,  \mathbb{Q}/\mathbb{Z})$
are residue maps, and the right hand side maps are corestriction maps. 
 
This gives us a description of elements  $l\in\te{Br}(k(t))$ modulo elements of $\Br(k)$: Choose closed points $M_1, \cdots ,
M_n$ of $\mathbb P^1_k$. Take any characters $c_j\in H^1(k( M_j) ,  \mathbb{Q}/\mathbb{Z})$,
satisfying $\sum \te{cores}(c_j)=0$. We have $\partial_{M_j}(l)=c_j$ and $\partial_P(l)=1$ for any other closed point of $\mathbb P^1_k$.

In turn we get a description of the residues $\phi^*(l)\in \te{Br}(k(S))$ (using the fact that components of fibres of $\phi$ have multiplicity $1$): The residue is trivial everywhere, except possibly
at the components above the $M_i$. When $L$ is above $M_j$
then the residue at $L$ is the image of $c_j$ under the restriction map $ H^1(k( M_j) ,  \mathbb{Q}/\mathbb{Z})\rightarrow H^1(k(L) ,  \mathbb{Q}/\mathbb{Z})$.

We are now in a position to prove part $1$. Assume that $x\in \te{Br}(S_k)$ and $s_k(x)=\overline{\mathcal A}$. In particular $x$ has trivial residue at all codimension $1$ subvarieties of $S_k$.

By lemma \ref{fffut} we can write $x=x_1+l$, where
\begin{eqnarray}\label{fundform}
x_1=\mathcal A +(a,F)_2+(b,G)_2 , \quad l \in \te{Br}(k(t)) \quad a,b \in k^*
\end{eqnarray} 
We describe the residues of $l$ as above ie. with the choice of $M_j$'s and
$c_j$'s.

Let $L$ be a smooth fibre of $\phi$ lying above $M$, ie. $M\notin \{0,\infty,\pm1,\pm i\} $. Then $\partial_L(x_1)=1$ because $F$ and $G$ are invertible elements of $\mathcal O_{S,L}$. Moreover $k(M)$ is integrally closed in $k(L)$ and so $ H^1(k(M) ,\mathbb{Q}/\mathbb{Z})\rightarrow H^1(k(L) ,  \mathbb{Q}/\mathbb{Z})$ is injective. Since $x$ has trivial residue at $L$ we deduce that the $M_j$ can only belong to $\{0,\infty,\pm1,\pm i\} $.

In particular each
$c_j$ is a character of $k$ and the condition they need to satisfy becomes \begin{eqnarray}\label{conda}
\sum
c_j=0\in H^1(k,\mathbb{Q}/\mathbb{Z})
\end{eqnarray}

 The following table gives $\partial_Y(x_1)$ (cf. calculation below the theorem).

\begin{center}

\begin{tabular}{|cc|c||c||cc|c||c||cc|c|}
\hline
  &  &  $\partial_Y(x_1)$& &  &  &  $\partial_Y(x_1)$ & &  &  &  $\partial_Y(x_1)$ \\ \hline \hline
&$l_1$  &    $ia$ &&   &$l_5$ &  $(1+i)ab$  &&   &$l_{13}$ &  $(1+i)b$ \\  \hline    
$0$ &$l_2$  &    $1$  &&   $1$ &$l_6$& $1$  && $i$  &$l_{14}$ & $1$    \\  \hline  
&$l_3$  &    $ 1$   &&   &$l_7$ & $1$  &&   &$l_{15}$ & $1$ \\ \hline  
&$l_4$  &    $ ia$  &&   &$l_8$ & $ (1+i)ab$  &&   &$l_{16}$ &  $(1+i)b$ \\  \hline \hline  

&$l_{21}$  &     $ab$  &&   &$l_9$ &  $ -(1+i)ab$  &&   &$l_{17}$ & $\sqrt{2}b$   \\  \hline  
$\infty$&$l_{22}$  &    $b$   &&  $-1$ &$l_{10}$ &  $ i$   && $-i$  &$l_{18}$ &  $-\sqrt{2}(1-i)$ \\  \hline  
&$l_{23}$  &     $b$    &&   &$l_{11}$ &  $ -i$    &&   &$l_{19}$ &  $\sqrt{2}(1-i)$ \\  \hline  
&$l_{24}$  &    $ ab$  &&   &$l_{12}$ & $ (1+i)ab$   &&   &$l_{20}$ & $-\sqrt{2}b$ \\  \hline 

\end{tabular}

\end{center}
where the values are to be considered as elements of $H^1(k(Y),\mu_2)$.

Note that if $Y$ and $Y'$ are components of the same fibre then $\partial_Y(l)=\partial_{Y'}(l)$ and so $\partial_Y(x_1)=\partial_{Y'}(x_1)$.
Let $\sim$ mean up to squares in $k$ (or $k(Y)$). We have $a\sim 1$ (looking at $l_{21}$
and $l_{22}$) and $b\sim 1+i$ (looking at $l_{13}$). Now all residues are trivial except above $-i$ and $\infty$ where the residues are $\sqrt{2}(1+i)$
and $1+i$ respectively. Therefore $c_{-i}=\sqrt{2}(1+i)$,
and $c_{\infty}=1+i$. Hence $c_{-i}+c_{\infty}=\sqrt{2}\neq1\in H^1(k,\mathbb
Z/2)$ since $\sqrt[4]{2}\notin k$. This contradicts the condition given by (\ref{conda}).

The proofs of part $2$ and part $3$ are similar. In particular we have: 
\[x_2=\mathcal D +(a,F)_2+(b,G)_2
\]
\begin{center}

\begin{tabular}{|cc|c||c||cc|c||c||cc|c|}
\hline
  &  &  $\partial_Y(x_2)$& &  &  &  $\partial_Y(x_2)$ & &  &  &  $\partial_Y(x_2)$ \\ \hline \hline
&$l_1$  &    $a$ &&   &$l_5$ &  $ab$  &&   &$l_{13}$ &  $\sqrt 2 b$ \\  \hline    
$0$ &$l_2$  &    $1$  &&   $1$ &$l_6$& $1$  && $i$  &$l_{14}$ & $1$    \\  \hline  
&$l_3$  &    $ 1$   &&   &$l_7$ & $1$  &&   &$l_{15}$ & $1$ \\ \hline  
&$l_4$  &    $ a$  &&   &$l_8$ & $ab$  &&   &$l_{16}$ &  $\sqrt 2 b$ \\  \hline \hline  

&$l_{21}$  &     $ab$  &&   &$l_9$ &  $ ab$  &&   &$l_{17}$ & $2i\sqrt{2}b$   \\  \hline  
$\infty$&$l_{22}$  &    $b$   &&  $-1$ &$l_{10}$ &  $ i$   && $-i$  &$l_{18}$ &  $1$ \\  \hline  
&$l_{23}$  &     $b$    &&   &$l_{11}$ &  $i$    &&   &$l_{19}$ &  $1$ \\  \hline  
&$l_{24}$  &    $ ab$  &&   &$l_{12}$ & $ ab$   &&   &$l_{20}$ & $2i\sqrt{2}b$ \\  \hline 

\end{tabular}

\end{center}
where the values are to be considered as elements of $H^1(k(Y),\mu_2)$.

Using the same arguments as for part $1$, we get $a\sim 1$, $b\sim i$, and $b\sim\sqrt 2$. Since $i$ is a square
in $k$, $\sqrt 2$ is a square as well. This contradicts the fact that $\sqrt[4]{2}\notin k$.

\[x_3=\mathcal E +(a,F)_2+(b,G)_2
\]

\begin{center}

\begin{tabular}{|cc|c||c||cc|c||c||cc|c|}
\hline
  &  &  $\partial_Y(x_3)$& &  &  &  $\partial_Y(x_3)$ & &  &  &  $\partial_Y(x_3)$ \\ \hline \hline
&$l_1$  &    $ia$ &&   &$l_5$ &  $(1+i)ab$  &&   &$l_{13}$ &  $\sqrt 2(1+i) b$ \\  \hline    
$0$ &$l_2$  &    $1$  &&   $1$ &$l_6$& $1$  && $i$  &$l_{14}$ & $1$    \\  \hline  
&$l_3$  &    $ 1$   &&   &$l_7$ & $1$  &&   &$l_{15}$ & $1$ \\ \hline  
&$l_4$  &    $ ia$  &&   &$l_8$ & $(1+i)ab$  &&   &$l_{16}$ &  $\sqrt 2(1+i) b$ \\  \hline \hline  

&$l_{21}$  &     $ab$  &&   &$l_9$ &  $-(1+i)ab$  &&   &$l_{17}$ & $2ib$   \\  \hline  
$\infty$&$l_{22}$  &    $b$   &&  $-1$ &$l_{10}$ &  $ 1$   && $-i$  &$l_{18}$ &  $-\sqrt 2(1-i)$ \\  \hline  
&$l_{23}$  &     $b$    &&   &$l_{11}$ &  $-1$    &&   &$l_{19}$ &  $\sqrt 2(1-i)$ \\  \hline  
&$l_{24}$  &    $ ab$  &&   &$l_{12}$ & $(1+i)ab$   &&   &$l_{20}$ & $-2ib$ \\  \hline 

\end{tabular}

\end{center}
where the values are to be considered as elements of $H^1(k(Y),\mu_2)$.

Using the same arguments as for part $1$, we get $a\sim 1$, $b\sim 1+i$, and $b\sim \sqrt 2(1+i)$. Hence $\sqrt
2$ is a square in $k$, contradiction.

\end{proof}

{\bf Calculation.} 
Here we will calculate $\partial_{l_{13}}(x_2)$. The other entries in the tables above are established in a similar way. We use the notation of the example after theorem \ref{bigthm}. The properties of the residue maps involved in the calculations we will do can be found in \cite[99.D]{Merk} or \cite[\S 7]{SerreInv}.

\noindent Since $F,t,t+1,a\in \mathcal O_{S,l_{13}}^*$, we have that 
$$\partial_{l_{13}}((t,F^2)_4+(F,t+1)_2+(a,F)_2)=1.$$

\noindent Note that we have the natural injections  $$H^1(k(l_{13}),\mu_2)\xrightarrow{g} H^1(k(l_{13}),\mu_4)\to H^1(k(l_{13}),\Q/\Z)$$ and that
$g(\ov q)=\widetilde{q^2}$ (remember we have fixed an isomorphism $\mu_4\cong\mathbb Z/4$, by sending $i$ to $1$).

\noindent Since $\textrm{val}_{l_{13}}(G)=1$ we have the following equalities (taking place in $H^1(k(l_{13}),\mu_4)$):

\[\begin{array}{lll}

\partial_{l_{13}}((t,G)_4+(G,t+1)_2)&=&\partial_{l_{13}}^{''}((t,G)_4)\cdot \partial_{l_{13}}^{'}((G,t+1)_2) \\
&=&(\widetilde{t})^{-1}\cdot \ov{i+1}\\
&=&\widetilde{-i}\cdot \widetilde{(i+1)^2} \\
&=&\widetilde{2}=\overline{\sqrt{2}}

\end{array}\]

\noindent As $\textrm{val}_{l_{13}}(b)=0$ we have $\partial_{l_{13}}((b,G)_2)=\ov b$.
Adding all these up we have established that $\partial_{l_{13}}(x_2)=\ov{\sqrt {2} b}$.

\begin{definition}\label{definitionofB}
We define
\begin{eqnarray}
\mathcal B & = & \mathcal A +(1+i,(t+i)G)_2 \nonumber \\
\mathcal E_1 & = &\mathcal E +(1+i,(t+i)G)_2\nonumber
\end{eqnarray}
\end{definition}

\begin{proposition}\label{part2}
Let $F$ be a number field containing $i$ and $\sqrt[4] 2$. Then $\mathcal B$, $\mathcal D$, $\mathcal E_1$ belong to $\Br(S_F)$. Moreover, we have that $\mathcal
B\mapsto \overline{\mathcal A}$, $\mathcal
D\mapsto \overline{\mathcal D}$, $\mathcal
E_1\mapsto \overline{\mathcal E}$.
\end{proposition}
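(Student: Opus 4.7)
The plan is to handle the two assertions separately: that the three classes lie in $\Br(S_F)$, and that they map to $\overline{\mathcal A},\overline{\mathcal D},\overline{\mathcal E}$ under $s_F$. The second is essentially formal, while the first reduces to a direct residue computation.

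For the image assertion, I would first observe that $\mathcal D$ is given by the same formula as the corresponding element of $\Br(\overline S)$ in Theorem \ref{bigthm}, so $\mathcal D\mapsto\overline{\mathcal D}$ is automatic by functoriality. For $\mathcal B$ and $\mathcal E_1$, the correction term $(1+i,(t+i)G)_2$ vanishes in $\Br(\overline F(S))$ because $1+i$ is a square in $\overline F$; combined with the injection $\Br(\overline S)\hookrightarrow\Br(\overline F(S))$, this yields $\mathcal B\mapsto\overline{\mathcal A}$ and $\mathcal E_1\mapsto\overline{\mathcal E}$.

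For the membership in $\Br(S_F)$ I would invoke Grothendieck's purity and check triviality of residues at every codimension-one point of $S_F$. Residues at horizontal prime divisors are trivial: repeating the constructions of Section $3$ verbatim with $F$ in place of $\mathbb C$ shows $\mathcal A,\mathcal D,\mathcal E\in\Br(C_F)$, while the correction $(1+i,(t+i)G)_2$ also lies in $\Br(C_F)$ since for any horizontal $D$ one has $v_D(t+i)=0$ (as $t+i$ is pulled back from $\mathbb P^1$) and $v_D(G)$ is even by Theorem \ref{the types} (indeed $\te{div}(G)$ on $C$ is divisible by $2$), so the tame symbol vanishes. Residues at smooth fibres of $\phi$ are trivial because all the relevant functions are units there. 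The problem therefore reduces to the $24$ components $l_1,\dots,l_{24}$ of the six reducible fibres.

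At each $l_i$ I would read off the residue of $\mathcal A$, $\mathcal D$, or $\mathcal E$ directly from the appropriate table in the proof of Proposition \ref{neothm} by specialising $a=b=1$, and extract $v_{l_i}(F)$ and $v_{l_i}(G)$ from the dependence of those tables on $a$ and $b$; the tame symbol formula then gives the residue of $(1+i,(t+i)G)_2$. For $\mathcal D$ the residues take values in $\{1,\sqrt 2,2i\sqrt 2\}$, all of which are squares under $\sqrt[4]{2}\in F$, via $\sqrt 2=(\sqrt[4]{2})^2$ and $2i\sqrt 2=((1+i)\sqrt[4]{2})^2$. For $\mathcal B$ and $\mathcal E_1$, the correction is designed precisely to cancel the offending $(1+i)$-residues of $\mathcal A$ and $\mathcal E$: at each $l_i$ with $v_{l_i}(G)=1$ it contributes $\overline{1+i}$, which combined with the residue produces $(1+i)^2=2i$ or $2i\sqrt 2$, each a square. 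The main obstacle is simply the case-by-case bookkeeping through all $24$ components, but the identities $i=\zeta_8^2$ (with $\zeta_8=(1+i)/\sqrt 2\in F$), $2i=(1+i)^2$, $\sqrt 2=(\sqrt[4]{2})^2$, $2\sqrt 2=((\sqrt[4]{2})^3)^2$, and $-1=i^2$, together with the corresponding squareness of $-i$ and $-2$, are enough to force every residue to be a square in $F(l_i)$.
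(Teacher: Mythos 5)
Your proposal is correct and follows essentially the same route as the paper: the paper likewise reduces everything to the residue tables of Proposition \ref{neothm}, simply substituting $a=1$, $b=1+i$, $l=(1+i,t+i)_2$ for $\mathcal B$ and $\mathcal E_1$ (and $a=b=1$, $l=1$ for $\mathcal D$) and observing that every resulting residue becomes a square once $\sqrt[4]2\in F$. Your separate handling of the correction term via the tame symbol is just an equivalent bookkeeping of the same computation.
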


\begin{proof}
The last line is clear. What we need to show is that each element belongs to $\Br(S_F)$.
In other words that each element has trivial residue at all prime divisors of $S_F$. A glance at the tables of proposition \ref{neothm} establishes the result: 
\begin{eqnarray}
\mathcal B & \leftrightarrow & a=1 \ , \ b=1+i  \ , \   l=(1+i,t+i)_2 \nonumber \\
\mathcal D & \leftrightarrow & a=1 \ , \     b=1  \ , \     l=1 \nonumber \\
\mathcal E_1 &\leftrightarrow & a=1  \ , \    b=1+i \ , \   l=(1+i,t+i)_2 \nonumber
\end{eqnarray}
\end{proof}

\section{Varying the coefficients}

To make the presentation in this section nicer we introduce the following terminology.

\begin{definition}
\begin{enumerate}
 \item

Let $V$ be a variety over a number field $L$. Let {\bf ST}$(V,L)$ be the statement: The $2$-primary torsion subgroup of $\Br(V_L)/\Br_1(V_L)$ is
trivial.
\item Let $a_i\in \mathbb Q^*$. For $(a_0:a_1:a_2:a_3)$, let Z be the condition
\[\sqrt[4]2 \notin \mathbb Q(i,\sqrt[4]{\frac{a_1}{a_0}},\sqrt[4]{\frac{a_2}{a_0}},\sqrt[4]{\frac{a_3}{a_0}})
\]
We define
\[W:=\{(a_0:a_1:a_2:a_3)  |    (a_0:a_1:a_2:a_3)\quad \te{satisfies condition Z}\}
\]
\end{enumerate}

\end{definition}

The main result of this section is the following theorem.
\begin{theorem}\label{ccprop}
Let $X$ be the surface 
\[a_0X_0^4-a_1X_1^4=a_2X_2^4-a_3X_3^4 \ ,\quad  a_i\in\mathbb Q^*\] 

If $(a_0:a_1:a_2:a_3)\in W$
then  {\bf ST}$(X,\mathbb Q)$ is true.

\noindent Equivalently: If neither $2$ nor $-2$ belong to the subgroup of $\mathbb Q^*/\Q^{*4}$
generated by the ratios $\frac{a_i}{a_j}$ then {\bf ST}$(X,\mathbb Q)$ is true.

\noindent Equivalently: If $2$ does not belong to the subgroup of $\mathbb Q^*/\Q^{*4}$
generated by the ratios $\frac{a_i}{a_j}$ and $-4$ then {\bf ST}$(X,\mathbb Q)$ is true.
\end{theorem}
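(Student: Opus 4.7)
The plan is to reduce the statement to Theorem \ref{qwqw} by base-changing to a field over which $X$ becomes isomorphic to the reference surface $S$ of Section 3, and then descending.

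First I would observe the geometric reduction. Set
\[
L = \mathbb Q\bigl(i,\sqrt[4]{a_1/a_0},\sqrt[4]{a_2/a_0},\sqrt[4]{a_3/a_0}\bigr).
\]
The substitution $X_j = \sqrt[4]{a_0/a_j}\, Y_j$ for $j=0,1,2,3$ carries the equation of $X$ into $Y_0^4-Y_1^4=Y_2^4-Y_3^4$, so there is an $L$-isomorphism $X_L \cong S_L$. By hypothesis $(a_0:a_1:a_2:a_3)\in W$, hence $\sqrt[4]2\notin L$; also $i\in L$ by construction. Theorem \ref{qwqw} then yields {\bf ST}$(S,L)$, and via the isomorphism {\bf ST}$(X,L)$.

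Next I would prove the formal descent: for any smooth projective geometrically integral variety $V/\mathbb Q$ and any field extension $L/\mathbb Q$, {\bf ST}$(V,L)$ implies {\bf ST}$(V,\mathbb Q)$. The point is that for $\alpha\in\Br(V)$ the condition $\alpha\in\Br_1(V)$ is equivalent to its image $\bar\alpha\in\Br(\overline V)$ being trivial, and this image coincides with the image of $\alpha_L$ in $\Br(\overline{V_L})=\Br(\overline V)$; hence $\alpha\in\Br_1(V) \Leftrightarrow \alpha_L\in\Br_1(V_L)$. Applying this to a $2$-primary $\alpha$ and using {\bf ST}$(V,L)$ gives $\alpha_L\in\Br_1(V_L)$, whence $\alpha\in\Br_1(V)$. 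Combined with the previous step this establishes {\bf ST}$(X,\mathbb Q)$.

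Finally, to verify the three equivalent algebraic formulations I would use Kummer theory over $\mathbb Q(i)$, which contains $\mu_4$. Standard Kummer theory gives $\sqrt[4]2\in L$ iff $2$ lies in the subgroup of $\mathbb Q(i)^*/\mathbb Q(i)^{*4}$ generated by $a_1/a_0,a_2/a_0,a_3/a_0$. A direct computation, using $(1+i)^4=-4$, shows that the kernel of the natural map $\mathbb Q^*/\mathbb Q^{*4}\to\mathbb Q(i)^*/\mathbb Q(i)^{*4}$ is exactly $\langle -4\rangle$; this translates $\sqrt[4]2\in L$ into: $2$ belongs to the subgroup of $\mathbb Q^*/\mathbb Q^{*4}$ generated by the $a_i/a_j$ and $-4$. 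For the other equivalence, write $A$ for the subgroup generated by the $a_i/a_j$; since $(-4)^2\equiv 1$, membership $2\in\langle A,-4\rangle$ is equivalent to $2\in A$ or $-1/2\in A$, i.e. $2\in A$ or $-8\in A$. In $\mathbb Q^*/\mathbb Q^{*4}$ the element $-2$ has order $4$ and $(-2)^3\equiv -8$, so $-8$ and $-2$ generate the same cyclic subgroup; thus $-8\in A\Leftrightarrow -2\in A$, finishing the chain of equivalences.

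The main obstacle is essentially nothing beyond Theorem \ref{qwqw} itself; once one has it, the passage from the fixed surface $S$ to arbitrary diagonal quartics is a formal base-change and the algebraic reformulations are pure Kummer-theoretic bookkeeping.
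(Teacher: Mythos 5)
Your proposal is correct and follows essentially the same route as the paper: base-change to $L=\mathbb Q(i,\sqrt[4]{a_1/a_0},\sqrt[4]{a_2/a_0},\sqrt[4]{a_3/a_0})$ where $X\cong S$, apply Theorem \ref{qwqw}, descend via the injection $\Br(X)/\Br_1(X)\hookrightarrow\Br(X_L)/\Br_1(X_L)$ (the paper's Lemma \ref{qop}), and handle the reformulations by Kummer theory over $\mathbb Q(i)$ with kernel $\langle -4\rangle$ (the paper's Lemma \ref{woul}). The only quibble is your claim that the descent works for \emph{any} field extension, which needs $L/\mathbb Q$ algebraic so that $\overline{X_L}=\overline X$; since your $L$ is a number field this is harmless.
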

We give two applications of this theorem at the end of this section. To prove theorem \ref{ccprop} we will need the following two lemmas.
\begin{lemma}\label{qop}
\begin{enumerate}

\item Let $L$ be a number field containing $i$. Then {\bf ST}$(S,L)$ is true
if and only if $\sqrt[4] 2\notin L$.

\item Let $L\subseteq F$ be an algebraic field extension. If  {\bf ST}$(V,F)$ is true, then {\bf ST}$(V,L)$
is true.

\end{enumerate}
\end{lemma}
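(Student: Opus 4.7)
The plan is to reduce ST$(V,L)$ to the more concrete statement $\Br(V_L)\{2\} \subseteq \Br_1(V_L)$, and then invoke Theorem \ref{qwqw} for part $(1)$ and a direct functoriality argument for part $(2)$.

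First I would verify that $\Br(V_L)$ is torsion whenever $V$ is smooth, projective and geometrically integral over a number field $L$, and in particular for $V=S$: $\Br(L)$ is torsion by class field theory; $\Br_1(V_L)/\text{im}(\Br(L))$ embeds into the finite group $H^1(L,\Pic(\overline V))$ via Hochschild--Serre (using that $\Pic(\overline S)$ is finitely generated with Galois action factoring through a finite quotient); and $\Br(V_L)/\Br_1(V_L)$ embeds into the torsion group $\Br(\overline V)$. Once $\Br(V_L)$ is torsion, it splits canonically into $p$-primary components, and so does $\Br_1(V_L)$, giving $\bigl(\Br(V_L)/\Br_1(V_L)\bigr)\{2\}=\Br(V_L)\{2\}/\Br_1(V_L)\{2\}$, so ST$(V,L)$ is equivalent to $\Br(V_L)\{2\}\subseteq \Br_1(V_L)$. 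With this reduction, part $(1)$ becomes an immediate restatement of Theorem \ref{qwqw}.

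For part $(2)$ I would argue directly using functoriality. Since $L\subseteq F$ is algebraic, $\overline F=\overline L$, so the restriction $\Br(V_L)\to\Br(\overline V)$ factors as $\Br(V_L)\to \Br(V_F)\to \Br(\overline V)$; consequently $x\in \Br(V_L)$ lies in $\Br_1(V_L)$ if and only if its image in $\Br(V_F)$ lies in $\Br_1(V_F)$. Given any $\bar x\in (\Br(V_L)/\Br_1(V_L))\{2\}$, its image in $\Br(V_F)/\Br_1(V_F)$ is $2$-primary and hence vanishes by ST$(V,F)$; therefore $x\in \Br_1(V_L)$ and $\bar x=0$, giving ST$(V,L)$.

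No genuine obstacle is expected; the lemma is largely formal, and the only point requiring any care is establishing the torsion property of $\Br(V_L)$ that supports the reduction underlying part $(1)$.
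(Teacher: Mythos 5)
Your proof is correct and takes essentially the same route as the paper: part (2) is the paper's argument (the injection $\Br(V_L)/\Br_1(V_L)\hookrightarrow \Br(V_F)/\Br_1(V_F)$ plus the fact that $2$-primary parts map to $2$-primary parts), and part (1) differs only in citing Theorem \ref{qwqw} rather than the equivalent Theorem \ref{bbg1} from which the paper deduces it, your reduction via the torsion decomposition of $\Br(S_L)$ just making explicit the bookkeeping the paper leaves implicit.
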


\begin{proof}
\begin{enumerate}

\item  Note that if {\bf ST}$(S,L)$ is not true then there is an element of exact order $2$ in $\Br(V_L)/\Br_1(V_L)$.
Therefore this part is just a restatement of theorem \ref{bbg1}.

\item  Clearly $\Br(V_L)/\Br_1(V_L)\hookrightarrow \Br(V_F)/\Br_1(V_F)$.
Moreover, under any group homomorphism the $2$-primary part maps into
the $2$-primary part. Hence the result.

\end{enumerate}
\end{proof}

\begin{lemma}\label{woul}
Let $F=\mathbb Q(i,\sqrt[4]{a_1},\cdots
,\sqrt[4]{a_n})$, $a_i\in \mathbb Q^*$. 

Then the following are equivalent
\begin{enumerate}
 \item  $\sqrt[4]2\in F$.
\item The class of $2$ or $-2$ belongs to the subgroup of $\mathbb Q^*/\Q^{*4}$
generated by the $a_i$.
\item The class of $2$  belongs to the subgroup of $\mathbb Q^*/\Q^{*4}$
generated by the $a_i$ and $-4$.
\end{enumerate}

\end{lemma}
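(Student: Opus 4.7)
The plan is to exploit Kummer theory over $\Q(i)$ and then transfer the result back to $\Q$ by computing the kernel of the natural map $\kappa\colon \Q^*/\Q^{*4}\to \Q(i)^*/\Q(i)^{*4}$. Since $i\in F$, the extension $F/\Q(i)$ is a Kummer extension of exponent dividing $4$, generated by the fourth roots of the $a_i$; standard Kummer theory therefore says that $\sqrt[4]{2}\in F$ if and only if the class of $2$ lies in the subgroup $\langle a_1,\ldots,a_n\rangle\subseteq \Q(i)^*/\Q(i)^{*4}$. So everything comes down to the comparison between $\Q^*/\Q^{*4}$ and $\Q(i)^*/\Q(i)^{*4}$.

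The hard part will be determining $\ker\kappa$; my expectation is $\ker\kappa=\{1,-4\}$. The inclusion $\{1,-4\}\subseteq\ker\kappa$ is immediate from $(1+i)^4=-4$. For the reverse inclusion I would take $b\in\Q^*$ with $b=c^4$ for some $c=\alpha+\beta i\in\Q(i)^*$, expand $c^4$, and require its imaginary part to vanish; this should force either $\alpha\beta=0$ (in which case $b$ is already a fourth power in $\Q$) or $\alpha^2=\beta^2$ (in which case $c=\alpha(1\pm i)$ and $b=-4\alpha^4\equiv -4\pmod{\Q^{*4}}$). This is essentially the only step that needs genuine computation.

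With these two ingredients in hand I would close the equivalence as a cycle. For (3)$\Rightarrow$(1): since $-4=(1+i)^4$ is a fourth power in $\Q(i)$, any congruence $2\equiv (-4)^\epsilon\prod a_i^{e_i}\pmod{\Q^{*4}}$ descends to $2\equiv \prod a_i^{e_i}\pmod{\Q(i)^{*4}}$, so $\sqrt[4]{2}\in F$ by Kummer theory. For (1)$\Rightarrow$(3): Kummer theory furnishes $c\in\Q(i)^*$ with $2=\prod a_i^{e_i}\cdot c^4$, whence $c^4=2\prod a_i^{-e_i}$ is a rational number lying in $\ker\kappa=\{1,-4\}$, delivering (3). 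Finally (2)$\Leftrightarrow$(3) is a short manipulation in $\Q^*/\Q^{*4}$ using the identity $2\cdot(-4)\cdot(-2)=16=2^4$, which gives $2\cdot(-4)\equiv(-2)^{-1}$ modulo fourth powers: the coset $(-4)\langle a_i\rangle$ in $\Q^*/\Q^{*4}$ contains $2$ precisely when $\langle a_i\rangle$ contains $-2$, which yields both directions.
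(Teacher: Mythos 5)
Your proof is correct, and its overall architecture is the same as the paper's: both reduce statement (1) to membership of $2$ in the subgroup of $\Q(i)^*/\Q(i)^{*4}$ generated by the $a_i$ via Kummer theory for $F/\Q(i)$, both then identify the kernel of $\Q^*/\Q^{*4}\to \Q(i)^*/\Q(i)^{*4}$ as the order-two subgroup generated by $-4$, and both dispatch (2)$\Leftrightarrow$(3) with the identity $2\cdot(-4)\cdot(-2)=16$. The one genuine difference is how the kernel is computed: the paper gets its order for free from the inflation--restriction sequence, $\ker f=H^1(\Q(i)/\Q,\mu_4)\cong\Z/2$, and then only has to exhibit the nontrivial element $-4=(2/(1+i))^4$; you instead expand $(\alpha+\beta i)^4$ and force the imaginary part $4\alpha\beta(\alpha^2-\beta^2)$ to vanish, which yields the same answer by a purely elementary case analysis. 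Your route costs a short computation but avoids any Galois cohomology; the paper's route is slicker but leans on knowing $H^1(\Z/2,\mu_4)$. Both are complete, and your handling of the descent step (1)$\Rightarrow$(3) --- observing that $2\prod a_i^{-e_i}$ is a rational fourth power in $\Q(i)$ and hence lies in $\{1,-4\}$ modulo $\Q^{*4}$ --- is exactly the content of the paper's "$f^{-1}(L)$ is generated by the $a_i$ and $-4$".
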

\begin{proof}
Let $L=(F^{*4}\cap \mathbb Q(i)^*)/\mathbb Q(i)^{*4}$, and let $f$ denote the restriction map $$\mathbb Q^*/\mathbb Q^{*4}\xrightarrow{f}\mathbb Q(i)^*/\mathbb Q(i)^{*^4}.$$
Clearly $\sqrt[4]2\in F$ if and only
if the class of $2$ in $\Q(i)^*/\mathbb Q(i)^{*^4}$ belongs to $L$ if and only
if the class of $2$ in $\Q^*/\mathbb Q^{*^4}$ belongs to $f^{-1}(L)$. 

Now we note that

(i) $F/\mathbb Q(i)$ is a $4$-Kummer extension, and so $L$ is generated by the classes
of the $a_i$ by standard properties (see eg. \cite{Morandi}).

(ii) By the inflation-restriction sequence ker$f=H^1(\mathbb Q(i)/\mathbb Q, \mu_4)\cong \mathbb Z/2$.
 Since $-4=(\frac{2}{1+i})^4$ we deduce that ker$f$ is generated by the class of $-4$.  

Therefore $f^{-1}(L)$ is generated by the $a_i$ and $-4$. This establishes the equivalence of 1 and 3.

 The equivalence of 2 and 3 is almost trivial, since $(-2)(-4)\equiv \frac{1}{2} \mod \Q^{*^4}$.
\end{proof}

{\it Proof of theorem \ref{ccprop}}:
Let $L=\mathbb Q(i,\sqrt[4]{\frac{a_1}{a_0}},\sqrt[4]{\frac{a_2}{a_0}},\sqrt[4]{\frac{a_3}{a_0}})$. The surface $X$ is isomorphic to $S$ over $L$.   If $(a_0:a_1:a_2:a_3)\in W$ then $\sqrt[4]2\notin L$. Therefore both {\bf ST}$(X,L)$ and {\bf ST}$(X,\mathbb Q)$ hold by lemma \ref{qop}. The last part of the theorem is immediate from lemma \ref{woul}.

In view of theorem \ref{ccprop}, we study the set $W$ in more detail.

\begin{lemma}\label{equi}
\begin{enumerate}
\item Condition Z is equivalent to the condition (which we will also call
condition Z)
\begin{eqnarray}\label{opre}
\sqrt[4]2 \notin \mathbb Q(i,\sqrt 2,\sqrt[4]{\frac{a_1}{a_0}},\sqrt[4]{\frac{a_2}{a_0}},\sqrt[4]{\frac{a_3}{a_0}})
\end{eqnarray}

\item If condition Z holds for $(a_0:a_1:a_2:a_3)$ then it holds for any permutation
of the $a_i$. It also holds for $(la_0:la_1:la_2:la_3)$.

\item Suppose that condition Z holds for $(a_0:a_1:a_2:a_3)$. We can multiply
any $a_i$ by a fourth power, by $-1$, or by $4$ and condition Z will still hold.
\item Let $a,b,c,d$ be non-zero integers. Suppose $(a:b:c:d)\in W$. Let $p$ be an odd prime that does not divide $abcd$. Then 
\[(p^{l_1}a:p^{l_2}b:p^{l_3}c:p^{l_4}d)\in W\]
for all integers $l_1$,$l_2$,$l_3$,$l_4$.
\item Let $a,b,c,d$ be odd integers. Then $(a:b:c:d)\in W$.
\item Let $a,b\neq\pm1$ be $4$-th power free, odd integers .

 Then $(1:1:2a:2b)\notin W$ if and only if $(a,b)$
(or of course $(b,a)$) is one of the following forms:

\begin{enumerate}
\item 
\begin{eqnarray}
a&=&\pm p_1^{a_1}\cdots p_n^{a_n}\cdot q_1^2\cdots q_r^2 \quad a_i\in\{1,3\}
\nonumber\\
b&=&\pm p_1^{2}\cdots p_n^{2} \nonumber
\end{eqnarray}

\item
\begin{eqnarray}
 a&=&\pm p_1^{a_1}\cdots p_n^{a_n}  \quad a_i\in\{1,3\} \nonumber\\
b&=&\pm p_1^{2}\cdots p_n^{2}\nonumber
\end{eqnarray}

\end{enumerate}
with $p_i,q_j$ distinct odd primes.

\end{enumerate}
\end{lemma}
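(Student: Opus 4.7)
The plan is to use lemma \ref{woul} as the central tool, translating Condition Z into the subgroup statement $\{2, -2\} \cap G = \emptyset$, where $G := \langle a_1/a_0, a_2/a_0, a_3/a_0\rangle \subset \Q^*/\Q^{*4}$; equivalently, $2 \notin \langle -4\rangle G$. For part 1, the modified condition similarly becomes $2 \notin \langle -4, 4 \rangle G$, and one direction is immediate. For the other, I would note that $\langle -4, 4 \rangle = \{1, -1, 4, -4\}$ in $\Q^*/\Q^{*4}$ (since $16 = 2^4$ is a fourth power), and check each coset directly: using also $8 \equiv 2^{-1}$ and $-8 \equiv (-2)^{-1}$ modulo fourth powers, each representation $2 \equiv g \cdot e$ with $e \in \{-1, 4, -4\}$ and $g \in G$ forces $\pm 2 \in G$, violating Condition Z.

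For parts 2 and 3, I would argue that permutation and overall scaling leave the ratios $a_i/a_0$ unchanged, while multiplying a single $a_j$ by a fourth power changes its ratio only by a fourth power; in either situation $G$ and the field are untouched. Multiplying $a_j$ by $4$ adjoins $\sqrt[4]{4} = \sqrt{2}$ to the field, while multiplying by $-1$ adjoins $\sqrt[4]{-1}$ which, combined with $i$, is the same as adjoining $\sqrt{2}$; in both cases part 1 gives preservation of Condition Z.

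Parts 4 and 5 I handle uniformly with valuation homomorphisms $v_p : \Q^*/\Q^{*4} \to \Z/4\Z$. For part 4, since $p \nmid abcd$ the map $v_p$ vanishes on $G = \langle b/a, c/a, d/a\rangle$, while the perturbed subgroup consists of classes $p^k h$ with $h \in G$; if $\pm 2 \equiv p^k h$ then $0 = v_p(\pm 2) \equiv k \pmod 4$, forcing $\pm 2 \in G$ and contradicting $(a:b:c:d) \in W$. Part 5 is the identical argument with $v_2$: when $a, b, c, d$ are all odd, $v_2$ annihilates $G$, yet $v_2(\pm 2) = 1 \ne 0$ in $\Z/4\Z$.

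Part 6 is where I expect the real work. Here $G = \langle 2a, 2b\rangle$, and $\pm 2 \in G$ translates via $v_2$ and the $v_p$ for odd primes $p \mid ab$ into the existence of integers $m, n$ with $m + n \equiv 1 \pmod 4$ and $m \cdot v_p(a) + n \cdot v_p(b) \equiv 0 \pmod 4$ for every odd $p \mid ab$. Since $m + n$ is odd exactly one of $m, n$ is; by the symmetry $a \leftrightarrow b$ I may assume $m$ is odd. Every prime $p$ dividing $a$ but not $b$ then leads to a contradiction (as $v_p(a) \in \{1,2,3\}$ and $m$ is odd), so every prime of $a$ divides $b$. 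A case analysis on the nine pairs $(v_p(a), v_p(b)) \in \{1,2,3\}^2$ for primes dividing both shows only $(2, 1)$ and $(2, 3)$ are admissible, each forcing $n \equiv 2 \pmod 4$; this in turn forces $v_p(b) = 2$ for primes of $b$ not dividing $a$. Re-labelling the primes yields form (a), specializing to form (b) when $b$ has no primes outside those of $a$; the case $n$ odd is covered by the swap $(a,b) \leftrightarrow (b,a)$ allowed in the statement. The converse is immediate by exhibiting $(m, n) = (2, 3)$ as a witness that $\pm 2 \in G$.
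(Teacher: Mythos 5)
Your proof is correct and follows essentially the same route as the paper: everything is funnelled through Lemma \ref{woul} and valuations on $\Q^*/\Q^{*4}$, and your part 6 is the paper's argument verbatim up to the opposite WLOG parity convention (you take the exponent of $a$ odd where the paper takes it even, which is why your primes of $a$ divide $b$ rather than vice versa --- harmless, since the statement permits swapping $(a,b)$). The only local deviations are cosmetic: part 1 via the subgroup $\{1,-1,4,-4\}$ instead of the paper's one-line field-theoretic remark, and part 5 via $v_2$ directly instead of inducting from $(1{:}1{:}1{:}1)$ using part 4; also note that a permutation moving $a_0$ does change the individual ratios $a_i/a_0$, though not the subgroup or the field they generate, which is what your argument actually uses.
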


\begin{proof}

For part $1$ it suffices to remark that if $i\in F$ then
$\sqrt[4]2\notin F$ implies that $\sqrt[4]2\notin F(\sqrt2)$.
Part $2$ is obvious. For part $3$ note that both $-1$ and $4$ are $4$-th powers in $\mathbb Q(i,\sqrt 2)$ and so the extension in (\ref{opre}) is unchanged. Part $4$ follows easily from lemma \ref{woul}. For part $5$ note that $(1:1:1:1)\in W$ and apply part $4$ successively for every odd prime dividing $abcd$.

Let us now prove part $6$. If $(a,b)$
is one of the forms described in the lemma, then $(2a)^2(2b)^3\equiv\pm2$
in $\mathbb Q^*/\Q^{*4}$. Hence $(1:1:2a:2b)\notin W$ by lemma \ref{woul}.
Conversely suppose that $(1:1:2a:2b)\notin W$. By lemma \ref{woul} we have  that

\begin{eqnarray}\label{conf}
\pm2\equiv2^{\epsilon_1+\epsilon_2}a^{\epsilon_1}b^{\epsilon_2}, \quad \te{in}
\quad
 \mathbb Q^*/\Q^{*4},\quad \te{some}\quad\epsilon_i\in\{0,1,2,3\}
\end{eqnarray}
By looking at the power of $2$, one of the $\epsilon_i$ is even and
the other is odd. Without loss of generality (we can interchange $a$ and $b$) $\epsilon_1$ is even. We cannot
have $\epsilon_1=0$ because $b\neq\pm1$ and is $4$-th power free. Hence $\epsilon_1=2$, and $\epsilon_2\in\{1,3\}$.
Now we look at (powers of odd primes in) (\ref{conf}), and note the following:
\begin{enumerate}
\item If a prime divides $b$ but not $a$ then it appears in the right
hand side of (\ref{conf}) to a
power not divisible by $4$. This is a contradiction. Hence if a prime divides $b$ then it divides $a$ as well.

\item If a prime divides $a$ but not $b$ then it divides $a$ to an even
power.

\item If a prime $p$ divides both $a$ and $b$ then it cannot divide $a$ to an
even power (same argument as in part $1$, as there would be no contribution from
$a$). Hence $p$ divides $a$ to an odd power. Necessarily it divides
$b$ to an even power.
\end{enumerate}
By combining the above, we see that $(a,b)$
has one of the forms described in the lemma.

\end{proof}

Let us now give two applications of theorem \ref{ccprop}.

\begin{theorem}\label{gen}
Let $X$ be the surface

\begin{eqnarray}
a_0X_0^4-a_1X_1^4=a_2X_2^4-a_3X_3^4
\end{eqnarray}
with $a_i$ integers. If $2$ divides every $a_i$ to the same parity then {\bf ST}$(X,\mathbb Q)$ holds.

\end{theorem}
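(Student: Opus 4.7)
The plan is to verify $(a_0:a_1:a_2:a_3)\in W$ and then apply Theorem~\ref{ccprop}. The strategy is to use Lemma~\ref{equi} to reduce to the case of four odd integers, where part~(5) of that lemma gives the conclusion directly.

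By Lemma~\ref{equi}(3), membership in $W$ is unchanged when we multiply a single coordinate by a fourth power or by $4$. In particular multiplication by $1/4$ is also permissible, since $1/4=(1/2)^{4}\cdot 4$ is a composition of two allowed operations. Thus I may adjust each $a_i$ by an arbitrary integer power of $4$, and so replace it with an integer $a_i'$ satisfying $v_2(a_i')\in\{0,1\}$ and $v_2(a_i')\equiv v_2(a_i)\pmod 2$, without changing whether the tuple belongs to $W$.

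By hypothesis the $v_2(a_i)$ all share a common parity, so after this reduction either all $a_i'$ are odd, or all $a_i'$ are $2$ times an odd integer. In the second case, Lemma~\ref{equi}(2) lets me simultaneously scale all four coordinates by $1/2$, returning to the case of four odd integers. Lemma~\ref{equi}(5) then yields $(a_0':a_1':a_2':a_3')\in W$, hence $(a_0:a_1:a_2:a_3)\in W$, and Theorem~\ref{ccprop} gives {\bf ST}$(X,\Q)$.

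I expect no real obstacle: the proof is bookkeeping on top of the invariance properties of $W$ already collected in Lemma~\ref{equi}. Conceptually, $W$ is determined by the classes of the $a_i$ in the quotient $\Q^*/\langle\Q^{*4},-1,4\rangle$, in which an integer is represented either by its odd part (when $v_2$ is even) or by twice its odd part (when $v_2$ is odd), so the common-parity assumption collapses the problem to the odd-integer case handled by Lemma~\ref{equi}(5).
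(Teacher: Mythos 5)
Your proposal is correct and follows essentially the same route as the paper: both arguments reduce to the odd-integer case via the invariance properties in Lemma~\ref{equi} (parts 2 and 3, applied in a slightly different order) and then conclude with part 5 and Theorem~\ref{ccprop}.
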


\begin{proof}

By theorem \ref{ccprop} it suffices to show that $(a_0:a_1:a_2:a_3)\in W$. To do this we apply lemma \ref{equi} three  times: By part $2$ we can assume that the parity is even. By part $3$ we can assume
that that the $a_i$ are odd integers. We are done by part $5$.

\end{proof}

\begin{theorem}
 Let $X$ be the surface (taken from \cite[\S 8]{SD}).
\begin{eqnarray}
X_0^4+4X_1^4=da^2X_2^4+db^2X_3^4
\end{eqnarray}
Without loss of generality $d$ is fourth-power free, and not divisible by $4$. Moreover $a,b$ are square-free, positive and coprime, with $a\geq b$.

The only case
when {\bf ST}$(X,\mathbb Q)$ might not hold is when $d=\pm2$ and $b=1$ or $2$.
\end{theorem}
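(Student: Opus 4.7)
The plan is to apply Theorem \ref{ccprop} to translate the statement into a subgroup-membership question in $\mathbb{Q}^{*}/\mathbb{Q}^{*4}$, and then to carry out a prime-by-prime analysis. In the notation of that theorem the surface $X$ corresponds to $(a_0,a_1,a_2,a_3)=(1,-4,da^{2},-db^{2})$, and its third equivalent formulation says that {\bf ST}$(X,\mathbb Q)$ is true as soon as $2\notin H$, where
\[
H\;:=\;\langle\,-4,\;da^{2},\;-db^{2}\,\rangle\;\subset\;\mathbb{Q}^{*}/\mathbb{Q}^{*4}.
\]
I would argue by contradiction: assume a relation
\[
2\;\equiv\;(-4)^{\alpha}(da^{2})^{\beta}(-db^{2})^{\gamma}\pmod{\mathbb{Q}^{*4}}
\]
with $\alpha\in\{0,1\}$, $\beta,\gamma\in\{0,1,2,3\}$, and read off constraints on $(d,a,b)$ prime by prime.

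Comparing $v_2$ of both sides yields
\[
1\;\equiv\;2\alpha+(\beta+\gamma)v_2(d)+2\beta v_2(a)+2\gamma v_2(b)\pmod 4.
\]
Reducing modulo $2$ and using $v_2(d)\le 1$ (since $4\nmid d$), this forces $v_2(d)=1$ and $\beta+\gamma$ odd. At each odd prime $p$ one gets $(\beta+\gamma)v_p(d)+2\beta v_p(a)+2\gamma v_p(b)\equiv 0\pmod 4$. For $p\mid d$ with $p\nmid ab$ this collapses to $(\beta+\gamma)v_p(d)\equiv 0\pmod 4$, which is impossible because $\beta+\gamma$ is a unit modulo $4$ and $1\le v_p(d)\le 3$ ($d$ is fourth-power free). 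Hence every odd prime dividing $d$ also divides $ab$.

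Since $\beta+\gamma$ is odd, exactly one of $\beta,\gamma$ is odd; I treat the case $\beta$ odd, $\gamma$ even (the other case is handled identically with $a$ and $b$ swapped and yields $d=\pm 2\,b_{\mathrm{odd}}^{2}$). For an odd prime $p\mid a$, $p\nmid d$ (so $p\nmid b$ by coprimality), the congruence reduces to $2\beta\equiv 0\pmod 4$, contradicting $\beta$ odd; hence every odd prime of $a$ divides $d$. For an odd prime $p$ dividing both $a$ and $d$ the relation $(\beta+\gamma)v_p(d)+2\beta\equiv 0\pmod 4$ together with $\beta$ and $\beta+\gamma$ both odd forces $v_p(d)=2$. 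An odd prime $p$ dividing both $b$ and $d$ would give $(\beta+\gamma)v_p(d)\equiv 0\pmod 4$, which is impossible. So the odd part of $d$ is exactly $a_{\mathrm{odd}}^{2}$, i.e.\ $d=\pm 2\,a_{\mathrm{odd}}^{2}$.

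To finish I would apply the rational substitution $X_2\mapsto X_2/a_{\mathrm{odd}}$ (or $X_3\mapsto X_3/b_{\mathrm{odd}}$ in the other sub-case): this replaces the coefficient $da^{2}$ of $X_2^{4}$ by $\pm 2$ when $a$ is odd and by $\pm 8$ when $a$ is even, while leaving $db^{2}=\pm 2\,a_{\mathrm{odd}}^{2}b^{2}$ unchanged. Rewriting in the shape $d'{a'}^{2}X_2^{4}+d'{b'}^{2}X_3^{4}$ and swapping $X_2\leftrightarrow X_3$ if necessary to restore $a'\ge b'$ produces a reduced triple with $d'=\pm 2$ and $b'\in\{1,2\}$, and under the canonical choice of reduced representative implicit in the theorem this forces the original $(d,a,b)$ already to have $d=\pm 2$ and $b\in\{1,2\}$, contradicting the hypothesis. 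The main obstacle I anticipate is precisely this final rescaling step: the normalisation ``$d$ fourth-power free and not divisible by $4$, $a,b$ square-free coprime positive, $a\ge b$'' does not a priori determine $(d,a,b)$ uniquely, so one has to identify the intended canonical representative (most likely the one with $\gcd(d,ab)$ minimised via rational rescalings of $X_2$ and $X_3$) in order to conclude that the arithmetic conclusion $d=\pm 2\,a_{\mathrm{odd}}^{2}$ really pins down $d=\pm 2$ and $b\in\{1,2\}$.
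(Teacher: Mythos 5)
Your overall strategy is the same as the paper's: reduce to the question of whether $2$ lies in the subgroup $H=\langle -4, da^2,-db^2\rangle$ of $\mathbb Q^*/\mathbb Q^{*4}$ via the third formulation of Theorem \ref{ccprop}, and then do elementary arithmetic. The paper packages that arithmetic into Theorem \ref{gen} and parts 3, 5, 6 of Lemma \ref{equi}, whereas you carry out the valuation computation directly; your version is, if anything, more transparent, and everything up to and including the conclusions ``$d=\pm 2\,a_{\mathrm{odd}}^2$ with $b$ unconstrained'' (resp.\ ``$d=\pm 2\,b_{\mathrm{odd}}^2$ with $a$ unconstrained'') is correct.

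The final rescaling step is a genuine gap, and you have located it precisely. Substituting $X_2\mapsto X_2/a_{\mathrm{odd}}$ yields a \emph{different} equation (an isomorphic surface) with a different normalised triple; it says nothing about the triple you started from, and no ``canonical representative'' hidden in the hypotheses can rescue the implication. Concretely, $(d,a,b)=(18,5,3)$ satisfies every stated hypothesis ($18=2\cdot 3^2$ is fourth-power free and not divisible by $4$; $5,3$ are square-free, coprime, $5\ge 3$), is not of the form $d=\pm2$, and yet $db^2=162=2\cdot 3^4\equiv 2$ in $\mathbb Q^*/\mathbb Q^{*4}$, so $2\in H$ and Theorem \ref{ccprop} gives nothing. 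Worse, scaling $X_3\mapsto X_3/3$ shows this surface is $\mathbb Q$-isomorphic to the member with $d=2$, $a=15$, $b=1$, which the theorem itself places in the ``might not hold'' camp; since {\bf ST} is an isomorphism invariant, $(18,5,3)$ must sit there too, so the statement as printed cannot be proved and is in fact contradicted by this triple. You should not feel you missed an idea from the paper: the paper's own proof has the same lacuna, since it applies Lemma \ref{equi}(6) to the pair $(d'a^2,d'b^2)$ without checking that these are fourth-power free and different from $\pm1$ (here $d'b^2=81=3^4$ violates both). The theorem and both arguments become correct if one strengthens the normalisation to ``$d$ square-free'' (equivalently, $da^2$ and $db^2$ fourth-power free), or reads the conclusion up to scaling of $X_2,X_3$; what your analysis actually proves is that {\bf ST}$(X,\mathbb Q)$ can fail only when $d=\pm 2m^2$ for some odd $m\ge 1$ dividing exactly one of $a,b$ to the exact power $m^2\,\|\,d$, i.e.\ only when $X$ is carried by a coordinate scaling onto a member with $d=\pm 2$ and $b\in\{1,2\}$.
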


\begin{proof}
If $d$ is odd then the assertion follows as a special case of theorem \ref{gen}.
Hence we can assume that $d=2d'$ with $d'$ odd. In view of theorem \ref{ccprop} it suffices to show that $(1:4:2d'a^2:2d'b^2)\notin W$ implies that $d'=\pm1$ and $b=1$ or $2$. By part $3$ of lemma \ref{equi} we can assume that $a$ and $b$ are odd and what we want to show is that $(1:1:2d'a^2:2d'b^2)\notin W$ implies that $d'=\pm1$ and $b=1$. Under our assumptions this follows immediately from part $6$ of lemma \ref{equi}.

\end{proof}

\begin{remark}
 
The surface $X$ is the general diagonal quartic surface, with the product of coefficients
 being a square and $a_1=4a_0$. If $a=b=1$, then $X$ is everywhere locally soluble if and
only if $d>0$, $d\equiv1,2,5 \ \textrm{or} \ 10 \mod 16$, and $d$ is not divisible
by any prime $p\equiv3 \mod 4$. When $d=2$, $X$ contains rational points. For these facts see \cite[\S 1 and \S 8]{SD}.
\end{remark}

\section{Example of a transcendental obstruction to weak approximation}
In this section we give an application of a transcendental element. In particular we show that one such element provides an obstruction to weak approximation on a diagonal quartic surface over a number field. This is a genuine transcendental obstruction in the sense that over this number field the algebraic Brauer group of our surface is just the constant algebras and hence they give no conditions on the adelic points.

Let us fix some notation for this section. We  denote by $M$ the Galois field of the polynomial $s^4-2$ over $\mathbb{Q}$, that is $M=\mathbb Q(i,\sqrt[4]2)$. There is a unique place in $M$ above $2$ with ramification index $8$. We will denote it by $u$.

 We remind the reader that $S_M$ denotes the variety $x^4-y^4=z^4-w^4$ over $M$ and $\mathcal B$ is 
the element of $\Br(S_M)$ defined in definition \ref{definitionofB}. If $F/M$ is a field extension and $A\in S(F)$ we denote by $\mathcal B(A)$ the element of $\Br(F)$ we get by specialising $\mathcal B$ at $A$.

The main result of this section is the following. 

\begin{theorem}\label{weakoverM}
$\mathcal{B}$ gives an obstruction to weak approximation on
$S_M$.
\end{theorem}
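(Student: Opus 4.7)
The plan is to establish the failure of weak approximation via the standard Brauer–Manin recipe: exhibit an adelic point whose sum of local invariants $\sum_v \inv_v(\mathcal{B}(P_v))$ is nonzero, which will force that adelic point to lie outside the closure of $S(M)$. Equivalently, I will show that the evaluation map
\[
\te{ev}_{\mathcal{B}}\colon \prod_v S(M_v) \longrightarrow \mathbb{Q}/\mathbb{Z}, \qquad (P_v)_v \longmapsto \sum_v \inv_v(\mathcal{B}(P_v))
\]
is non-constant, by concentrating all the non-triviality at the single place $u$ lying above $2$.

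First, I would verify that $\te{ev}_{\mathcal{B}}$ reduces to a calculation at $u$. Since $i\in M$, the field $M$ has no real place, so $\Br(M_v)=0$ for every archimedean $v$ and the corresponding local invariants vanish. For a finite place $v\nmid 2$, the variety $S$ has good reduction at $v$ (the defining equation is smooth over $\mathcal{O}_M[1/2]$), so there is a smooth proper model $\mathcal{S}/\mathcal{O}_{M,v}$. I would argue that $\mathcal{B}$, possibly after modification by a class from $\Br(M)$ (which does not change local invariants at the other places in view of reciprocity), extends to an Azumaya algebra on $\mathcal{S}$ by checking that its residues at the components of the special fibre vanish; this is read off from the explicit presentation of $\mathcal{B}$ using the tables in the appendix that underlie the residue calculations in Proposition \ref{neothm}. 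Once $\mathcal{B}$ is Azumaya at $v$, evaluation at any $P_v\in S(M_v)=\mathcal{S}(\mathcal{O}_{M,v})$ factors through $\Br(\kappa(v))=0$, so $\inv_v(\mathcal{B}(P_v))=0$ identically.

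Next, at the place $u$, I would produce two explicit local points $P, Q \in S(M_u)$ with distinct local invariants $\inv_u(\mathcal{B}(P)) \neq \inv_u(\mathcal{B}(Q))$. Concretely, one chooses $M_u$-points at which the rational functions $t$, $F$, $G$ defined in Section~3 specialise to tractable elements of $M_u^\times$, and then computes the resulting element
\[
(FG,\,t+1)_2 + (F,\,t+i)_2 + (1+i,\,(t+i)G)_2 \in \Br(M_u)
\]
using Hilbert symbol calculations in the totally ramified extension $M_u/\mathbb{Q}_2$ (ramification index $8$, residue field $\mathbb{F}_2$). Given two such local points with different invariants, one can (using that $\te{ev}_{\mathcal{B}}$ is continuous in the adelic topology and that each $S(M_v)$ is non-empty, e.g.\ via the trivial solutions) build two adelic points $(R_v),(R'_v)$ that agree away from $u$ and take values $P,Q$ at $u$; these have different values of $\te{ev}_{\mathcal{B}}$, so at least one is non-zero. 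That adelic point is not in the closure of $S(M)$, proving the obstruction.

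The main obstacle is the concrete computation at $u$. The field $M_u$ is a wildly ramified $2$-adic field, so Hilbert symbols $(\alpha,\beta)_2$ cannot be evaluated by the simple tame-symbol formula and require careful work with $M_u^\times/M_u^{\times 2}$ (whose $\mathbb{F}_2$-dimension is larger than $2$ because of the wild ramification). I expect the proof to proceed by judicious choice of $P$ and $Q$ that make most of the three cup products trivial, leaving a single quaternion algebra to analyse; the preparatory residue calculations in Section~4 (in particular the tables in Proposition \ref{neothm}) and the argument that $\sqrt[4]{2}\in M$ is crucial — it is precisely the condition that allowed $\mathcal{B}$ to live in $\Br(S_M)$ in Proposition \ref{part2}, and it should simultaneously be what makes the local symbol at $u$ non-trivial, mirroring the ``$\sqrt{2}$ is a square'' obstruction encountered in Proposition \ref{neothm}.
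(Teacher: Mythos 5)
Your overall strategy is the right one and matches the paper's in outline: reduce everything to the unique place $u$ of $M$ above $2$ and exhibit an adelic point whose sum of local invariants of $\mathcal{B}$ is nonzero. But there is a genuine gap at the heart of the argument: you never actually produce a point $P\in S(M_u)$ at which $\inv_u(\mathcal{B}(P))\neq 0$, nor a second local point where the invariant differs. You correctly identify this as ``the main obstacle'' and then leave it as an expectation. That computation is the entire mathematical content of the theorem. In the paper it occupies Lemmas \ref{localserre}--\ref{tocompute} and Proposition \ref{trul}: one first solves $l^4=1-8\sqrt2(3+2\sqrt2)$ in $\mathbb{Q}_2(\sqrt2)$ with a prescribed congruence on $l^2$ (via the $p$-th power filtration statement of Lemma \ref{localserre}), sets $P=(1/\sqrt[4]{2},\; l/\sqrt[4]{2},\; 1+\sqrt2)$, observes that all but one of the symbols in the presentation (\ref{repofB}) evaluated at $P$ are restricted from the index-$2$ subfield $\mathbb{Q}_2(i,\sqrt2)$ and hence vanish, and then computes the surviving quaternion symbol by pushing it down the tower $M_u\supset\mathbb{Q}_2(i,\sqrt2)\supset\mathbb{Q}_2(\sqrt2)\supset\mathbb{Q}_2$ with the corestriction formulae, landing in $\inv_2([-1,-2])+\inv_2([-2,-7])=\tfrac12$. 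None of this is routine (as you note, $M_u/\mathbb{Q}_2$ is wildly and totally ramified of degree $8$), and without it you have a plan rather than a proof.

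Two smaller comments on the reduction away from $u$. The paper's route is slicker than yours: it takes the global point $Q=(2,-1,2)\in S(M)$, uses $\sum_v\inv_v(\mathcal{B}(Q))=0$ by reciprocity, checks $\inv_u(\mathcal{B}(Q))=0$ (Lemma \ref{otherpoint}, again because $\mathcal{B}(Q)$ is restricted from $\mathbb{Q}_2(i)$), and then modifies $Q$ only at $u$; no analysis at the places $v\neq u$ is needed at all. Your good-reduction argument at $v\nmid 2$ is also shakier than you suggest: extending $\mathcal{B}$ to an Azumaya algebra on a smooth proper model requires vanishing of residues along the special fibre, and the parenthetical about modifying by a class of $\Br(M)$ ``not changing local invariants at the other places'' is wrong as stated --- it changes individual local invariants and only preserves the total sum. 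Fortunately your closing step (two adelic points agreeing away from $u$ with different invariants at $u$, so at least one has nonzero sum) makes that entire first paragraph unnecessary, and is essentially the same observation the paper exploits via $Q$; but the step it cannot replace is the explicit construction of $P$.
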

The proof of the theorem is given at the end of this section.

\noindent {\bf Remark.} It is easy to see that Gal$(\overline M/M)$ acts trivially on Pic$(\overline S)\cong \mathbb Z^{20}$, and so $\Br_1(S_M)=\te{Br}(M)$ (cf. comments above lemma \ref{ker1}), ie. the algebraic Brauer group of $S_M$ is just the constant algebras.

 We will have to calculate invariants of specialisations of $\mathcal{B}$. By straightforward manipulations we have the following representation for $\mathcal{B}$ (we denote the quaternion algebra $(a,b)_2$ by $[a,b]$ in this section):
 \begin{equation}\label{repofB}
\mathcal{B}=[A_1 , A_2] + [A_3 , A_4] + [1+i , A_5] 
\end{equation}

\noindent where

\[\begin{array}{ccc}

A_1   &  =  & (1+i)(z-1)(x-y)B \\
A_2   &  =  &(-1+z^2+x^2-y^2)(-1+z^2-ix^2+iy^2)  \\
A_3   &  =  & 2(z^2-1)(-yx^3+y^3x-z+z^3) \\
A_4   &  =  & (1+i)(z^2-1)(-1+z^2+x^2-y^2) \\
A_5   &  =  & (z^2-1)(x^2-y^2+iz^2-i) \\
B     &  =  & y^2x^2-z-i+z^3+y^3x+z^4-x^4-yx^3\\
      &     & +iy^4-iyx^3+iy^3x-iy^2x^2-iz+iz^3+iz^2-z^2\\ 
\end{array}
\]

\noindent For ease with the manipulations mentioned above note that 

$$
F=\frac{(i-1)(x-y)B}{(z-1)^3(z+1)^2}, \quad       G=\frac{2(x-y)^2(x^3y-xy^3-z^3+z)}{(1-z^2)^4}
$$
$$
[A_1,A_2]=[F,(t+1)(t+i)], \quad [A_3,A_4]=[G,(t+1)(1+i)] \quad  [1+i,A_5]=[1+i,t+i].
$$

 Here and for the rest of this section we use affine coordinates on the open set $w=1$.

\begin{lemma}\label{otherpoint}
Let $Q=(2,-1,2)\in S(M)$. Then $ \textrm{inv}_u( \mathcal{B}(Q))=0$.

\end{lemma}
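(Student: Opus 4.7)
The plan is to use the explicit representation (\ref{repofB}) of $\mathcal{B}$ as a sum of three quaternion symbols, specialise at $Q$, and exhibit that the resulting class already vanishes in $\Br(M)$ -- in particular in $\Br(M_u)$. The main observation that makes everything cheap is that $M$ is rich in squares: since $\sqrt{2}=(\sqrt[4]{2})^2 \in M$ and $\zeta_8 := (1+i)/\sqrt{2} \in M$ satisfies $\zeta_8^2 = i$, the elements $-1$, $i$ and $2$ all lie in $M^{*2}$.

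First I would plug $x=2$, $y=-1$, $z=2$, $w=1$ into the polynomial definitions of $A_1,\dots,A_5$ and $B$. A short calculation gives $B(Q)=12(1+i)$ and consequently
\[A_1(Q) = 72i, \quad A_2(Q) = 18(1-i), \quad A_3(Q) = 72, \quad A_4(Q) = 18(1+i), \quad A_5(Q) = 9(1+i).\]
Since $2$, $-1$, $i$, and hence $-i$, are squares in $M$, the numbers $72i = 2^3\cdot 9\cdot i$, $72$, and $18$ are all in $M^{*2}$, and $1-i = -i(1+i) \equiv 1+i \pmod{M^{*2}}$. Thus modulo squares
\[A_1(Q), A_3(Q) \equiv 1, \qquad A_2(Q), A_4(Q), A_5(Q) \equiv 1+i.\]

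Substituting into
\[\mathcal{B}(Q) = [A_1(Q), A_2(Q)] + [A_3(Q), A_4(Q)] + [1+i, A_5(Q)],\]
the first two summands are trivial because one slot is a square, while the third reduces to $[1+i,1+i] = [1+i,-1]$, which is again trivial because $-1 \in M^{*2}$. Therefore $\mathcal{B}(Q)=0$ in $\Br(M)$, and a fortiori $\inv_u(\mathcal{B}(Q))=0$.

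There is no serious obstacle: the argument is elementary bookkeeping in $M^*/M^{*2}$, where essentially every potential sign or factor of $2$ is killed by the presence of $\zeta_8$ in $M$. The only place one must be careful is the numerical evaluation of $B(Q)$, which telescopes neatly to $12(1+i)$.
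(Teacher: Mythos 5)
Your proof is correct, and every numerical evaluation checks out: $B(Q)=12(1+i)$, $A_1(Q)=72i$, $A_2(Q)=18(1-i)$, $A_3(Q)=72$, $A_4(Q)=18(1+i)$, $A_5(Q)=9(1+i)$, and indeed $2$, $i$, $-1$ are all squares in $M$ because $\sqrt2=(\sqrt[4]2)^2$ and $\zeta_8=(1+i)/\sqrt2$ lie in $M$. The reduction of $\mathcal B(Q)$ to $[1+i,1+i]=[1+i,-1]=0$ is sound, so you obtain $\mathcal B(Q)=0$ in $\Br(M)$ and a fortiori $\textrm{inv}_u(\mathcal B(Q))=0$.

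However, your route is genuinely different from the paper's, and it is worth seeing both. The paper evaluates nothing: it merely observes that, since the coordinates of $Q$ are rational, all the entries $A_j(Q)$ lie in $\mathbb Q(i)\subset\mathbb Q_2(i)$, so $\mathcal B(Q)\in\Br(M_u)$ is the restriction, along the degree-$4$ extension $\mathbb Q_2(i,\sqrt[4]2)/\mathbb Q_2(i)$, of a sum of quaternion algebras, i.e.\ of a class of order dividing $2$; by the local behaviour of restriction (proposition \ref{Serr}, the invariant gets multiplied by the degree), this class dies. That argument is shorter, requires no arithmetic in $\mathbb Z[i]$, and would apply verbatim to \emph{any} point with coordinates in $\mathbb Q_2(i)$. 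What your computation buys in exchange is the stronger conclusion that $\mathcal B(Q)$ is trivial in $\Br(M)$ globally, not just at $u$ --- precisely the fact the paper states without proof in the remark following the lemma. Either argument is acceptable here; the only caveat for yours is that it rests entirely on the numerical evaluation of $B(Q)$, so an arithmetic slip would be fatal, whereas the paper's argument is immune to such slips.
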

\begin{proof}
Consider $\mathcal{B}(Q)$ as an element of $\Br(\mathbb{Q}_2(i,\sqrt[4]{2}))$. By looking at the formulae (\ref{repofB}), it is obvious 
that $\mathcal{B}(Q)$
is the image of an element of order $2$ under the restriction 
map 
\[\Br(\mathbb{Q}_2(i))\xrightarrow{\textrm{Res}} \Br(\mathbb{Q}_2(i,\sqrt[4]{2})).\]
 Therefore it is trivial (cf. proposition \ref{Serr}).
\end{proof}
{\bf Remark.}
It is easy to establish that $\mathcal{B}(Q)$ is actually trivial as an element of $\Br(M)$ (by using equations (\ref{repofB})).

Our aim is to find an $M_u$-point of $S$ such that $\mathcal B$ specialises to a non-trivial algebra at that point.
There is nothing conceptual in finding such a point. However it is not that straightforward: the equations for $\mathcal B$ are not very simple and moreover we must make calculations of the invariant map for a totally ramified extension of $\mathbb Q_2$ of degree $8$.
The rest of this section is devoted to finding such a point.

For the convenience of the reader we quote a number theoretic result.
\begin{lemma}\label{localserre}
Let $F$ be a field complete under a discrete valuation $v$. Suppose that $F$
 has characteristic zero and its residue field has characteristic $p\neq0$.
 Let $e$ be the absolute ramification index of $F$ and let $V^{(m)}=\{x\in F | val_v(x-1)\geq m\}$.
 Then for $m > \frac{e}{p-1}$, the map $x\mapsto
x^p$ is an isomorphism of $V^{(m)}$ onto $V^{(m+e)}$.

\end{lemma}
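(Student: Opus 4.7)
The plan is to break the statement into three steps: (a) the map $x\mapsto x^p$ actually lands in $V^{(m+e)}$, (b) it is injective, and (c) it is surjective.

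For step (a), I would write $x = 1+y$ with $v(y)\geq m$ and expand
\[
x^p - 1 \;=\; py \;+\; \sum_{k=2}^{p-1}\binom{p}{k}y^k \;+\; y^p .
\]
Since $v(p)=e$ and $p$ divides $\binom{p}{k}$ for $1\leq k\leq p-1$, each middle term has valuation at least $e+km$, while $v(y^p)\geq pm$. The hypothesis $(p-1)m>e$ gives $pm>m+e$, and $e+km>e+m$ for $k\geq 2$ is automatic. Hence the strictly minimal term is $py$, so $v(x^p-1)=e+m$ and $x^p\in V^{(m+e)}$.

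For step (b), any element of the kernel is a $p$-th root of unity lying in $V^{(m)}$. A primitive $p$-th root of unity $\zeta$ generates a totally ramified extension $F(\zeta)/F$ of degree dividing $p-1$, and the standard cyclotomic calculation gives $v_F(\zeta-1)=e/(p-1)$. Since $m>e/(p-1)$, no nontrivial such $\zeta$ can lie in $V^{(m)}$, so the kernel is trivial.

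For step (c), the cleanest route is via the $p$-adic exponential and logarithm. Using the classical bound $v(k!)\leq e(k-1)/(p-1)$, one checks that for $m>e/(p-1)$ the series $\log(1+y)=\sum_{k\geq 1}(-1)^{k+1}y^k/k$ and $\exp(w)=\sum_{k\geq 0}w^k/k!$ converge and define mutually inverse group isomorphisms between $V^{(m)}$ and the additive group $\{w\in F : v(w)\geq m\}$. Under this identification the $p$-th power map becomes multiplication by $p$, which is manifestly an isomorphism $\{v\geq m\}\to\{v\geq m+e\}$; exponentiating back gives the desired isomorphism $V^{(m)}\to V^{(m+e)}$.

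The main obstacle is step (c): the convergence and mutual inversion of $\exp$ and $\log$ rest on delicate $p$-adic bookkeeping for $v(k!)$ and $v(k)$, and the threshold $m>e/(p-1)$ is exactly what makes those estimates go through. As an alternative one could invoke Hensel's lemma on $f(X)=X^p-z$ directly, but the naive form of Hensel needs $v(f(1))>2v(f'(1))$, i.e.\ $m>e$, which is stronger than our hypothesis when $p\geq 3$; one would therefore have to bootstrap a refined Newton iteration, essentially reconstructing the $\exp$ series by hand.
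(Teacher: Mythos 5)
Your proof is correct, but note that the paper does not prove this lemma at all --- it simply cites Serre, \emph{Local Fields}, Ch.~XIV, \S 4, Proposition~9 --- so any self-contained argument is automatically a different route. Your three steps are sound. In (a), the binomial estimate actually gives $v(x^p-1)=e+v(x-1)$ (not $e+m$ unless $v(x-1)=m$ exactly), but the containment $x^p\in V^{(m+e)}$ is all that is needed. In (b), the only fact required is that a nontrivial $p$-th root of unity $\zeta$ lying in $F$ satisfies $v_F(\zeta-1)=e/(p-1)$; this follows from $\prod_{j=1}^{p-1}(1-\zeta^j)=\Phi_p(1)=p$ together with the factors being pairwise associates. (Your aside that $F(\zeta)/F$ is totally ramified is neither needed --- here $\zeta\in F$ --- nor true in general, but it is harmless.) Step (c) is the standard $\exp$/$\log$ isomorphism $V^{(m)}\cong\{w: v(w)\ge m\}$, valid exactly in the range $m>e/(p-1)$ thanks to the bounds $v(k)\le e\log_p k$ and $v(k!)\le e(k-1)/(p-1)$; under it the $p$-th power map becomes multiplication by $p$, which sends $\{v\ge m\}$ bijectively onto $\{v\ge m+e\}$, and one exponentiates back (using that $m+e>e/(p-1)$ as well). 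A more elementary alternative for surjectivity, which avoids the $p$-adic bookkeeping you rightly identify as the delicate point and is essentially the argument in the cited source, is successive approximation: for $n>e/(p-1)$ the $p$-th power map induces on graded pieces the map $V^{(n)}/V^{(n+1)}\to V^{(n+e)}/V^{(n+e+1)}$ given by $\bar y\mapsto\overline{py}$, which is a bijection of copies of the residue field, and completeness of $F$ then upgrades this to bijectivity of $V^{(m)}\to V^{(m+e)}$. Either argument is a legitimate replacement for the citation.
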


\begin{proof}
\cite[XIV, \S 4, proposition 9]{Serre}. 
\end{proof}

\begin{lemma}\label{l}
Let $F=\mathbb{Q}_2(\sqrt{2})$ and $d=1-8\sqrt{2}(3+2\sqrt{2})$. There exists $l\in F$ such that 
\begin{eqnarray}\label{eqq}
l^4&=&d \nonumber \\
l^2&\equiv&1+4\sqrt{2}+16\sqrt{2}+32\mod 2^5\sqrt{2}
\end{eqnarray}   

\end{lemma}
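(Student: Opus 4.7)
My plan is a direct application of Lemma~\ref{localserre} to $F=\Q_2(\sqrt{2})$. Here the residue characteristic is $p=2$, the absolute ramification index is $e=2$, and $\pi:=\sqrt{2}$ is a uniformizer, so the hypothesis $m>e/(p-1)=2$ is satisfied for every $m\ge 3$; in particular squaring induces isomorphisms $V^{(3)}\xrightarrow{\sim} V^{(5)}\xrightarrow{\sim} V^{(7)}$.

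First I would verify that $d\in V^{(7)}$: since $3+2\sqrt{2}$ is a unit in $F$ and $\textrm{val}_\pi(8\sqrt{2})=7$, we have $\textrm{val}_\pi(d-1)=7$. Iterating the lemma, $d$ then admits a unique fourth root $l\in V^{(3)}$, which takes care of the first line of \eqref{eqq}.

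For the congruence on $l^2$, take as candidate $a:=1+4\sqrt{2}+16\sqrt{2}+32=33+20\sqrt{2}$. Since $a-1=32+20\sqrt{2}$ has valuation $\min(10,5)=5$, we have $a\in V^{(5)}$, and a direct computation gives
\[ a^2-d \;=\; 1920+1344\sqrt{2}, \]
of valuation $\min(14,13)=13$. Setting $y:=l^2\in V^{(5)}$, the reduction $y+a\equiv 2\pmod{\pi^5}$ forces $\textrm{val}_\pi(y+a)=2$ exactly. Factoring $(y-a)(y+a)=y^2-a^2=d-a^2$ then yields $\textrm{val}_\pi(y-a)=13-2=11$, i.e.\ $l^2\equiv a\bmod\pi^{11}$; and $\pi^{11}=2^5\sqrt{2}$, which is the required congruence.

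No step is genuinely hard. The only subtlety is the bookkeeping for $y+a$: one must observe that $\textrm{val}_\pi(y+a)=2$ (and not $\geq 5$), so that the identity $(y-a)(y+a)=d-a^2$ actually delivers precision $11$ for $y-a$, which is exactly what the statement asks for. The candidate $a$ itself is naturally produced by two Newton steps for the square root of $d$ starting from $1$.
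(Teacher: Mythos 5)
Your proposal is correct and follows the same route as the paper: two applications of Lemma~\ref{localserre} (with $e=2$, $p=2$, so squaring gives isomorphisms $V^{(3)}\to V^{(5)}\to V^{(7)}$) to produce the fourth root, followed by a check that the square root $l^2\in V^{(5)}$ satisfies the stated congruence. The only difference is that the paper dismisses the congruence as "easy to see," whereas you supply the actual verification via $\textrm{val}_\pi(a^2-d)=13$, $\textrm{val}_\pi(l^2+a)=2$, hence $\textrm{val}_\pi(l^2-a)=11$, which is exactly the required precision; your computations all check out.
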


\begin{proof}
By lemma \ref{localserre}, $d$ is a $4$-th power in $F$. It is easy to see that we can choose $\alpha=\sqrt d$ with $\alpha\equiv1+4\sqrt{2}+16\sqrt{2}+32\mod 2^5\sqrt{2}$. By lemma \ref{localserre}, $\alpha$ is a square in $F$.
Hence we can choose $l\in F$ with
the properties described in the lemma.

\end{proof}

Now we define the point $P=(x_0,y_0,z_0)$ by:
\begin{eqnarray}\label{pointP}
x_0=\frac{1}{\sqrt[4]{2}},\quad y_0=lx_0,\quad z_0=1+\sqrt{2} \nonumber
\end{eqnarray}
where $l$ is defined in lemma \ref{l}. It is clear that $P\in S(\mathbb{Q}_2(i,\sqrt[4]{2}))$.
By using lemma \ref{localserre} and (\ref{eqq}) we can easily establish that
\begin{eqnarray}\label{uptosq}
z_0^2-1\sim1+ \sqrt{2},\quad l^2+1\sim1+2\sqrt{2}, \quad z_0^2-l^2\sim 1-\sqrt{2}
\end{eqnarray} 
\noindent where $\sim$ means up to squares in $\mathbb{Q}_2(\sqrt{2})$.

For the convenience of the reader we quote another well-known result which lists some properties of the 
restriction and corestriction homomorphisms. We will be using them in the proof of proposition \ref{trul}.

\begin{proposition}\label{Serr}

Suppose that $K$ is a field which is complete under a discrete valuation, with perfect residue
field. Let $L/K$ be a finite extension. Then the following are commutative:

\centerline{
\begin{xy}
(40,0)*+{\Br(K)}="ka";
(80,0)*+{\textrm{Hom}(\mathfrak{g}_K,\mathbb{Q}/\mathbb{Z})}="kd";  
(40,20)*+{\Br(L)}="pa";
(80,20)*+{\textrm{Hom}(\mathfrak{g}_L,\mathbb{Q}/\mathbb{Z})}="pd";
{\ar@{->}^{\partial_{\mathcal{O}_K}} "ka";"kd"};
{\ar@{->}_{\textrm{Cor}} "pa";"ka"};
{\ar@{->}^{\partial_{\mathcal{O}_L}} "pa";"pd"};
{\ar@{->}_{\textrm{Cor}} "pd";"kd"};
\end{xy}}
\bigskip

\centerline{
\begin{xy}
(40,0)*+{\Br(K)}="ka";
(80,0)*+{\textrm{Hom}(\mathfrak{g}_K,\mathbb{Q}/\mathbb{Z})}="kd";  
(40,20)*+{\Br(L)}="pa";
(80,20)*+{\textrm{Hom}(\mathfrak{g}_L,\mathbb{Q}/\mathbb{Z})}="pd";
{\ar@{->}^{\partial_{\mathcal{O}_K}} "ka";"kd"};
{\ar@{->}_{\textrm{Res}} "ka";"pa"};
{\ar@{->}^{\partial_{\mathcal{O}_L}} "pa";"pd"};
{\ar@{->}_{e\cdot \textrm{Res}} "kd";"pd"};
\end{xy}}

where $\mathfrak{g}_K$ (resp.  $\mathfrak{g}_L$) is the absolute Galois group of the residue field of $\mathcal{O}_K$ (resp. $\mathcal{O}_L$).\\

Let $L/K$ be a finite separable extension. Denote by $(a,b)_K$ (resp. $(a,b)_L$)
the symbol $(a,b)$ computed in $K$ (resp. $L$). Then
\[\textrm{Cor}((a,c)_L)=(a,Nc)_K \quad \te{for}  \  a \in K^*, \   c\in L^*
\]
\[\textrm{Res}((a,b)_K)=(a,b)_L \quad \te{for}  \  a \in K^*, \  b\in K^*
\]
where $N$ denotes the norm map $L^*\xrightarrow{N}K^*$.
\end{proposition}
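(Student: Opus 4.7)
The plan is to assemble Proposition \ref{Serr} from three standard ingredients: functoriality of the symbol under restriction, the projection formula for corestriction, and the well-known behavior of the residue map under change of local base field. Since the proposition is quoted as classical, the goal of the proof sketch is to indicate why each piece holds and to fix conventions so that the equalities hold on the nose.

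First I would dispose of the symbol formulas. Identify $(a,b)_K \in \Br(K)[n]$ with the cup product $\delta_K(a) \cup \delta_K(b) \in H^2(K,\mu_n)$, where $\delta_K:K^*/K^{*n} \to H^1(K,\mu_n)$ is the Kummer connecting map. Restriction commutes with both the Kummer boundary and the cup product, so $\mathrm{Res}((a,b)_K) = \delta_L(a)\cup\delta_L(b) = (a,b)_L$, giving the second symbol formula. For the first, write $(a,c)_L = \mathrm{Res}(\delta_K(a)) \cup \delta_L(c)$ using $a \in K^*$, apply the projection formula
\[
\mathrm{Cor}\bigl(\mathrm{Res}(x) \cup y\bigr) \;=\; x \cup \mathrm{Cor}(y),
\]
and use that on $H^1(-,\mathbb{G}_m)$ the corestriction corresponds to the norm on units, so $\mathrm{Cor}(\delta_L(c)) = \delta_K(Nc)$. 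This gives $\mathrm{Cor}((a,c)_L) = (a,Nc)_K$.

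Next I would verify the two commutative diagrams. The residue map $\partial_{\mathcal{O}_K}$ arises from the low-degree terms of the Hochschild--Serre spectral sequence for $K^{\mathrm{nr}}/K$, using $\Br(K^{\mathrm{nr}}) = 0$ (valid because the residue field is perfect). The corestriction square is then just the functoriality of that spectral sequence under the inclusion $\mathfrak{g}_L \hookrightarrow \mathfrak{g}_K$ of absolute Galois groups of residue fields, combined with compatibility of corestriction in group cohomology with the filtration coming from the unramified subextension.

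For the restriction square, the factor $e$ is forced by the valuation: if $\pi_K$ is a uniformizer of $K$, then $\mathrm{val}_L(\pi_K) = e$. Concretely, every class in $\Br(K)$ can be written after unramified extension as a cyclic algebra $(\pi_K,\chi)$ with $\chi \in \mathrm{Hom}(\mathfrak{g}_K,\mathbb{Q}/\mathbb{Z})$, whose residue is $\chi$. Its restriction to $L$ equals $(u\pi_L^e, \chi|_{\mathfrak{g}_L})$ for some unit $u$, and the residue formula for symbols gives residue $e \cdot \chi|_{\mathfrak{g}_L}$, as required.

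The main obstacle is bookkeeping: matching sign and normalization conventions so that the two diagrams commute strictly and the symbol formulas come out without spurious signs. Since these compatibilities are worked out in detail in \cite[Ch.~XIV]{Serre} (and the residue-map conventions used in Section~3 agree with those references), I would complete the proof by invoking the relevant statements from there rather than reproducing the diagram chases.
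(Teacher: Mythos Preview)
Your proposal is correct, but it supplies far more than the paper does: the paper's entire proof is a bare citation to \cite[XII, \S 3, ex.~2, and XIV, \S 2, ex.~4]{Serre}. Proposition~\ref{Serr} is explicitly introduced as a quoted well-known result, not something the paper proves, so the author simply points to Serre's \emph{Local Fields} and moves on.

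Your sketch---deriving the symbol formulas from functoriality of the Kummer boundary plus the projection formula, and getting the residue diagrams from the Hochschild--Serre spectral sequence for $K^{\mathrm{nr}}/K$ together with the effect of ramification on the valuation of a uniformizer---is exactly how one would unpack those exercises in Serre. So in substance you and the paper agree; you just chose to indicate \emph{why} the cited exercises hold rather than treating them as a black box. That is a reasonable expository choice, and your final remark that one should ultimately defer to \cite{Serre} for the sign and normalization bookkeeping matches the paper's approach precisely.
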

\begin{proof}
 \cite[XII, \S 3, ex. 2, and XIV, \S 2, ex. 4]{Serre}. 
\end{proof}

Let us note one easy consequence
\begin{lemma}\label{tocompute}
We retain the notation of proposition \ref{Serr}. Let $L=K(\sqrt{d})$ be
a quadratic extension. Then 
\[\textrm{Cor}([\sqrt{d},a+b\sqrt{d}]_L)=[a,-d]_K+[-ab,a^2-db^2]_K \quad \te{for}  \  a, \ b \in K^*
\]
\end{lemma}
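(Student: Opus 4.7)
The strategy is to use the bilinearity of the quaternion symbol together with the Steinberg relation to rewrite $[\sqrt d,a+b\sqrt d]_L$ as a sum of symbols whose first entry already lies in $K^{*}$, so that the corestriction formula $\textrm{Cor}([u,v]_L)=[u,Nv]_K$ from Proposition \ref{Serr} can be applied directly.

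First I would split off the ``constant part'' by writing $a+b\sqrt d = a\cdot\bigl(1+(b/a)\sqrt d\bigr)$, so that
\[
[\sqrt d,\,a+b\sqrt d]_L \;=\; [\sqrt d,a]_L \;+\; [\sqrt d,\,1+(b/a)\sqrt d]_L.
\]
For the first summand, symmetry of the 2-symbol gives $[\sqrt d,a]_L=[a,\sqrt d]_L$ with $a\in K^{*}$; Proposition \ref{Serr} then yields $\textrm{Cor}([a,\sqrt d]_L)=[a,N(\sqrt d)]_K=[a,-d]_K$, which is the first term of the claim.

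The real work is the second summand. Set $c=b/a\in K^{*}$. The identity $(-c\sqrt d)+(1+c\sqrt d)=1$ is the input to the Steinberg relation, giving $[-c\sqrt d,\,1+c\sqrt d]_L=0$ in $\Br(L)$. Expanding the first slot by bilinearity,
\[
[-1,\,1+c\sqrt d]_L+[c,\,1+c\sqrt d]_L+[\sqrt d,\,1+c\sqrt d]_L=0,
\]
and working modulo $2$ (so signs disappear) this rearranges to
\[
[\sqrt d,\,1+c\sqrt d]_L \;=\; [-c,\,1+c\sqrt d]_L.
\]
Now the first entry lies in $K^{*}$, so Proposition \ref{Serr} applies and gives
\[
\textrm{Cor}\bigl([-c,\,1+c\sqrt d]_L\bigr)=\bigl[-c,\,N(1+c\sqrt d)\bigr]_K=\bigl[-c,\,1-dc^{2}\bigr]_K.
\]
Finally I would clear denominators modulo squares: $-c=-b/a\equiv -ab\pmod{K^{*2}}$ and $1-dc^{2}=(a^{2}-db^{2})/a^{2}\equiv a^{2}-db^{2}\pmod{K^{*2}}$, so this second contribution equals $[-ab,\,a^{2}-db^{2}]_K$. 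Adding the two pieces yields the asserted identity.

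The only step that is not purely formal is the Steinberg manipulation that removes $\sqrt d$ from the first slot; everything else is bookkeeping plus Proposition \ref{Serr}. One should also note that the intermediate symbols are well-defined: $c\sqrt d\neq 0,-1$ because $b\neq 0$ and $d$ is not a square in $K$, and $a+b\sqrt d\neq 0$ for the same reason. This makes the computation valid for arbitrary $a,b\in K^{*}$ as stated.
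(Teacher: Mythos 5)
Your proof is correct and follows essentially the same route as the paper: both arguments use a Steinberg-type relation to rewrite $[\sqrt d,\,a+b\sqrt d]_L$ as a sum of symbols whose first entries lie in $K^{*}$ (the paper via the identity $[e,c]=[-ec,e+c]$ with $e=a$, $c=b\sqrt d$, you via normalizing by $a$ and applying $[x,1-x]=0$), and then both conclude with the projection formula $\textrm{Cor}([u,v]_L)=[u,Nv]_K$ from Proposition \ref{Serr}. The difference is only in the bookkeeping, and your verification that the intermediate symbols are well defined is a welcome touch.
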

\begin{proof}
Clearly $[x,y^2-x]=1$ for $x,y\in L$ with $x(y^2-x)\neq 0$. Appplying this for $x=-ec$ and $y=c$, it is easy to see that $[e,c]=[-ec,e+c]$
 for $e,c\in L^*$, $e+c\neq0 $. Substituting $e=a$, $c=b\sqrt{d}$ in the last equation and using proposition \ref{Serr} gives the result.
\end{proof}

\begin{proposition}\label{trul}
$\textrm{inv}_u(\mathcal{B}(P))=\frac{1}{2}$

\end{proposition}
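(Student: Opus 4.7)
The plan is to compute $\textrm{inv}_u(\mathcal{B}(P))$ by substituting $P$ into the decomposition (\ref{repofB}) and then reducing each of the three quaternion summands modulo classes whose local invariant is forced to be zero.

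First I would exploit the fact that $l \in \mathbb{Q}_2(\sqrt{2})$, so $x_0^2 = 1/\sqrt{2}$, $y_0^2 = l^2/\sqrt{2}$ and $z_0 = 1 + \sqrt{2}$ all lie in $F_1 := \mathbb{Q}_2(\sqrt{2})$, while $F_2 := \mathbb{Q}_2(\sqrt[4]{2})$ contains $x_0$ and $y_0$. Writing out the evaluations, $A_3(P) \in F_1^{*}$, and each of the other $A_j(P)$ is (modulo $F_1^{*}$) built out of only a handful of ``upstairs'' factors: the $1+i$ occurring in $A_1$ and $A_4$, the difference $x_0 - y_0 = (1-l)/\sqrt[4]{2}$ hidden in $A_1$, and a factor of $i$ or $1+i$ in the second factors of $A_2$ and $A_5$. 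Expanding bilinearly, $\mathcal{B}(P)$ becomes a sum of quaternion symbols $[\alpha,\beta]$ whose entries individually belong to one of $F_1^{*}$, $F_1(i)^{*}$, $F_2^{*}$, plus a handful of genuinely mixed symbols involving both $1+i$ and $\sqrt[4]{2}$.

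Second I would invoke the general principle that for any finite local extension $L/K$ of even degree, the restriction $\Br(K)[2] \to \Br(L)$ is zero, because $\textrm{inv}_L \circ \textrm{Res} = [L:K] \cdot \textrm{inv}_K$ on $\mathbb{Q}/\mathbb{Z}$. Since $[M_u : F_1] = 4$, $[M_u : F_2] = 2$ and $[M_u : F_1(i)] = 2$ are all even, every summand whose two entries lie in a common proper subfield of $M_u$ contributes zero to $\textrm{inv}_u$. This kills the bulk of the expansion, and only the ``truly mixed'' quaternion classes — those that simultaneously require the factor $1+i$ and the factor $\sqrt[4]{2}$ — can contribute.

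Finally, I would reduce the few surviving symbols to invariants over $F_1$ (or $\mathbb{Q}_2$) using Lemma \ref{tocompute} applied to the quadratic extensions $F_2/F_1$ and $F_1(i)/F_1$, in the same spirit as the calculation of Lemma \ref{otherpoint}. The congruences (\ref{uptosq}) for $z_0^2 - 1$, $l^2+1$ and $z_0^2 - l^2$ modulo squares in $F_1$, combined with the sharper congruence for $l^2$ from Lemma \ref{l} and Lemma \ref{localserre} to recognise further squares, should collapse the sum to a single non-trivial Hilbert symbol over $F_1$ (or $\mathbb{Q}_2$) whose invariant equals $1/2$. The main obstacle is bookkeeping: one must expand $B(P)$ and keep track of the square class of each factor in the correct subfield, and because the residue characteristic is $2$ with absolute ramification index $8$, no tame-symbol shortcut is available and every square-class or Hilbert-symbol identity has to be justified directly via Lemma \ref{localserre}.
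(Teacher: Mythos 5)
Your reduction strategy coincides with the paper's: since $z_0,x_0^2,y_0^2,x_0y_0\in\mathbb{Q}_2(\sqrt2)$, every symbol in the bilinear expansion of (\ref{repofB}) whose two entries lie in a common proper subfield of $M_u$ is the restriction of a $2$-torsion class along an even-degree extension of local fields and hence has trivial invariant, and the only survivor is the single mixed symbol $[x_0-y_0,\,-1+z_0^2-ix_0^2+iy_0^2]$, with entries coming from $\mathbb{Q}_2(\sqrt[4]2)$ and $\mathbb{Q}_2(i,\sqrt2)$ respectively. Up to this point your argument is correct and is exactly how the paper's proof begins.

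The gap is that you stop where the actual content of the proposition starts. The statement is a specific numerical value; a priori the surviving mixed symbol could perfectly well have invariant $0$, in which case Theorem \ref{weakoverM} would collapse, so writing that the congruences (\ref{uptosq}) ``should collapse the sum to a single non-trivial Hilbert symbol whose invariant equals $1/2$'' is to assert the conclusion rather than prove it. Concretely, two computations are missing. First, to descend the mixed symbol you corestrict in stages (first entry from $M_u$ to $\mathbb{Q}_2(i,\sqrt2)$ via $N(x_0-y_0)=-(1-l)^2/\sqrt2$, then second entry from $\mathbb{Q}_2(i,\sqrt2)$ to $\mathbb{Q}_2(\sqrt2)$ via Proposition \ref{Serr}); the second norm is $(-1+z_0^2)^2+(x_0^2-y_0^2)^2$, and the paper identifies its square class only by means of the identity $(-1+z^2)^2+(x^2-y^2)^2=2(z^2-1)(z^2-m^2)/(m^2+1)$, valid on $S$ for $y=mx$, which is precisely what makes the three quantities of (\ref{uptosq}) appear and yields $[\sqrt2,-1-2\sqrt2]$ over $\mathbb{Q}_2(\sqrt2)$. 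Your outline never produces this identity (nor any substitute for the norm computation), so the congruences you cite have nothing to act on; note also that Lemma \ref{tocompute} only applies to symbols of the special shape $[\sqrt d,a+b\sqrt d]$, so it enters only at the last stage $\mathbb{Q}_2(\sqrt2)/\mathbb{Q}_2$, not for the extensions $F_2/F_1$ and $F_1(i)/F_1$ as you propose. Second, one must actually evaluate $\textrm{inv}_{u_2}([\sqrt2,-1-2\sqrt2])$, which the paper does via Lemma \ref{tocompute}: it equals $\textrm{inv}_2([-1,-2])+\textrm{inv}_2([-2,-7])=\frac12+0=\frac12$. Until these steps are carried out, the value $\frac12$ is unestablished.
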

\begin{proof}
Let $u_1$ be the place of $\mathbb{Q}_2(i,\sqrt{2})$, and let $u_2$ be the place of $\mathbb{Q}_2(\sqrt{2})$.

First note that by definition
\begin{eqnarray}\label{alll}
z_0, \, x_0^2, \, y_0^2, \, x_0y_0 \in \mathbb{Q}_2(\sqrt{2})
\end{eqnarray}
As a consequence
\begin{eqnarray}\label{eqq1}
\te{inv}_u(\mathcal{B}(P))=\te{inv}_u([x_0-y_0,(-1+z_0^2+x_0^2-y_0^2)(-1+z_0^2-ix_0^2+iy_0^2)])\nonumber
\end{eqnarray}
since by looking at the formulae (\ref{repofB}) we see that everything else appearing in $\mathcal{B}(P)$
is the image of an element of order $2$ under the restriction 
map 
\[\te{Br}(\mathbb{Q}_2(i,\sqrt{2}))\xrightarrow{\te{Res}}\te{Br}(\mathbb{Q}_2(i,\sqrt[4]{2}))\]
 and hence trivial (cf. proposition \ref{Serr}).

Proposition \ref{Serr} allows us to rewrite the previous equation as

\begin{eqnarray}\label{eqq3}
\te{inv}_u(\mathcal{B}(P))&=& \te{inv}_{u_1}([\frac{-1}{\sqrt{2}}(1-l)^2,(-1+z_0^2+x_0^2-y_0^2)(-1+z_0^2-ix_0^2+iy_0^2)])
\nonumber  
\\
 & = & \te{inv}_{u_1}([\sqrt{2},(-1+z_0^2+x_0^2-y_0^2)(-1+z_0^2-ix_0^2+iy_0^2)]) \nonumber
\\
 & = & \te{inv}_{u_1}([\sqrt{2},(-1+z_0^2-ix_0^2+iy_0^2)]) \nonumber \\
 & = & \te{inv}_{u_2}([\sqrt{2},(-1+z_0^2)^2+(x_0^2-y_0^2)^2])
\end{eqnarray}
Note that to establish the third equality we use (\ref{alll}) and the argument below it.

When $(x,mx,z)\in S$ the following equality holds
\begin{eqnarray}
(-1+z^2)^2+(x^2-y^2)^2&=& \frac{2(z^2-1)(z^2-m^2)}{m^2+1}
\nonumber  
\end{eqnarray}
where $y=mx$. Hence from (\ref{eqq3}) we get the first equality in the following

\begin{eqnarray}\label{eqqq4}
\te{inv}_u(\mathcal{B}(P))&=& \te{inv}_{u_2}([\sqrt{2},(z_0^2-1)(z_0^2-l^2)(l^2+1)]) \nonumber \\
&=& \te{inv}_{u_2}([\sqrt{2},-1-2\sqrt{2}]) \nonumber \\
&=& \te{inv}_{2}([-1,-2])+\te{inv}_{2}([-2,-7]) \nonumber  \\
&=&\frac{1}{2}  \nonumber
\end{eqnarray}

The second equality comes from formulae (\ref{uptosq}). The third from lemma \ref{tocompute}. The fourth from the well-known formulae for inv$_2([a,b])$ (see eg. \cite[XIV, \S 4]{Serre}).

\end{proof}

{\it Proof of theorem \ref{weakoverM}}:
Since $S$ is proper we have that $S(\mathbb{A}_M)=\prod_v S(M_v)$, where the product is taken over all places of $M$.
Let 
$$S(\mathbb{A}_M)^{\mathcal{B}}=\{\{P_v\}\in S(\mathbb{A}_M) |\sum_v\te{inv}_v(\mathcal{B}(P_v))=0  \}.$$
The set $S(\mathbb{A}_M)^{\mathcal{B}}$ is a closed (w.r.t. the adelic or product topology) subset of $S(\mathbb{A}_M)$ that contains $S(M)$ (see \cite[5.2]{Skoro}). Therefore to prove the theorem it suffices to exhibit an adelic point which does not belong to $S(\mathbb{A}_M)^{\mathcal{B}}$.

Define
the adelic point $\{N_v\}$ by: $N_v=Q$ if $v\neq u$ and $N_u=P$.

\noindent Note that since $Q$ is a global point we have that

\[\sum_v\textrm{inv}_v(\mathcal{B}(Q))=0
\]

\noindent Therefore proposition \ref{trul} and lemma \ref{otherpoint} now imply that
\[\sum_v\textrm{inv}_v(\mathcal{B}(N_v))=\textrm{inv}_u(\mathcal{B}(P))-\textrm{inv}_u(\mathcal{B}(Q))=\frac{1}{2}\neq0
\]
ie. $\{N_v\}\notin S(\mathbb{A}_M)^{\mathcal{B}} $, which is what we wanted to show.

\begin{center}
 {\bf Acknowledgement}
\end{center}

\noindent I would like to thank Alexei Skorobogatov for many helpful discussions and guidance throughout 
the course of this research.

\pagebreak

\noindent  Ecole Polytechnique F\'{e}d\'{e}rale de Lausanne, EPFL-SFB-IMB-CSAG, 

\noindent Station 8, CH-1015, Lausanne, Switzerland.

\noindent email address: evis.ieronymou@epfl.ch
\pagebreak

\begin{center}
 APPENDIX
\end{center}
The appendix is to facilitate with computations alluded in the paper (proposition \ref{AA}, theorem \ref{bigthm}, proposition \ref{neothm}).

\noindent The components of the degenerate fibres of the fibration $\phi$.

\begin{center}
\begin{tabular}{|c@{:}c||c@{:}c||c@{:}c|}
\hline
\multicolumn{2}{|c||}{$(0:1)$}  &    \multicolumn{2}{|c||}{$(1:1)$}   & \multicolumn{2}{|c|}{$(-1:1)$}               \\ \hline \hline

$l_{1}$ & $\begin{array}{ccc}x&=&y\\z&=&iw\\\end{array}$ 

&$l_{5}$ &  $\begin{array}{ccc}x&=&z\\y&=&w\\\end{array}$  

&$l_{9}$ & $\begin{array}{ccc}x&=&iz\\y&=&iw\\
\end{array}$                            \\ \hline

$l_{2}$ & $\begin{array}{ccc}x&=&y \\z&=&-iw\\ \end{array}$

&$l_{6}$&  $\begin{array}{ccc}x&=&z \\y&=&-w\\ \end{array}$ 

&  $l_{10}$
 & $\begin{array}{ccc}x&=&iz\\y&=&-iw\\
\end{array}  $  \\ \hline

$l_{3}$ & $\begin{array}{ccc}x&=&-y\\z&=&iw\\\end{array}$

&$l_{7}$ &  $\begin{array}{ccc}x&=&-z\\y&=&w\\\end{array}$

&$l_{11}$ & $\begin{array}{ccc}x&=&-iz\\y&=&iw\\\end{array}$  
  \\ \hline

$l_{4}$ & $\begin{array}{ccc}x&=&-y\\z&=&-iw\\\end{array}$ 

&$l_{8}$ &  $\begin{array}{ccc}x&=&-z\\y&=&-w\\\end{array}$                                 
&$l_{12}$ & $\begin{array}{ccc}x&=&-iz\\y&=&-iw\\
\end{array}$   \\ \hline

\end{tabular}

\bigskip

\begin{tabular}{|c@{:}c||c@{:}c||c@{:}c|}
\hline
\multicolumn{2}{|c||}{$(i:1)$}  &    \multicolumn{2}{|c||}{$(-i:1)$}   & \multicolumn{2}{|c|}{$(1:0)$}               \\ \hline \hline

$l_{13}$ & $\begin{array}{ccc}w&=&\alpha x\\z&=&\alpha y\\\end{array}$ 

&$l_{17}$ &  $\begin{array}{ccc}x&=&\alpha w\\y&=&\alpha z\\\end{array}$  

&$l_{21}$ & $\begin{array}{ccc}x&=&iy\\z&=&w\\
\end{array}$                            \\ \hline

$l_{14}$ & $\begin{array}{ccc}w&=&\alpha x \\z&=&-\alpha y\\ \end{array}$

&$l_{18}$&  $\begin{array}{ccc}x&=&\alpha w \\y&=&-\alpha z\\ \end{array}$ 

&  $l_{22}$
 & $\begin{array}{ccc}x&=&iy\\z&=&-w\\
\end{array}  $  \\ \hline

$l_{15}$ & $\begin{array}{ccc}w&=&-\alpha x\\z&=&\alpha y\\\end{array}$

&$l_{19}$ &  $\begin{array}{ccc}x&=&-\alpha w\\y&=&\alpha z\\\end{array}$

&$l_{23}$ & $\begin{array}{ccc}x&=&-iy\\z&=&w\\\end{array}$  
  \\ \hline

$l_{16}$ & $\begin{array}{ccc}w&=&-\alpha x\\z&=&-\alpha y\\\end{array}$ 

&$l_{20}$ &  $\begin{array}{ccc}x&=&-\alpha w\\y&=&-\alpha z\\\end{array}$                                 
&$l_{24}$ & $\begin{array}{ccc}x&=&-iy\\z&=&-w\\
\end{array}$   \\ \hline

\end{tabular}

\bigskip
where $i=\sqrt{-1}$ and $\alpha^2=i$.
\end{center}

The divisors of $F$ and $G$

\begin{eqnarray}\label{divisor formula}
\te{div}(G) & =& 2l_1+2l_2+l_5+l_8+l_9+l_{12}+l_{13}+l_{16}+l_{17}+l_{20}
\nonumber\\
& &-3l_{21} -3l_{22}-3l_{23}-3l_{24}+\te{horizontal divisors} \\
\te{div}(F) & =& l_1+2l_2+l_4+l_5+l_8+l_9+l_{12} \nonumber\\
&&-3l_{21}-2l_{22}-2l_{23}-l_{24}+\te{horizontal divisors}\nonumber
\end{eqnarray}

In the following table we use affine
coordinates on the open set $w=1$. $\overline s$ denotes the restriction of $s$ to the function field of the corresponding line.

\begin{center}

\begin{tabular}{|c|c|c|c|c|}
\hline
     &     $l_1$                               & $l_2$ &$l_3$&$l_4$   \\  \hline \hline
$G$ 
 &    $\begin{array}{c}
t^2s\\ \overline s=\frac{i}{2y^2}
\end{array}$     
 &
  $\begin{array}{c}
t^2s\\ \overline s=-\frac{i}{2y^2}
\end{array}$
  &
    $\begin{array}{c}
s\\ \overline s=2iy^2
\end{array}$
    & 
     $\begin{array}{c}
s\\ \overline s=-2iy^2
\end{array}$
      \\  \hline  
$F$ 
 &  
$\begin{array}{c}
ts\\ \overline s=\frac{i-1}{y}
\end{array}$ 
     &
      $\begin{array}{c}
t^2s\\ \overline s=\frac{(i-1)y}{2}
\end{array}$
       &   
        $\begin{array}{c}
s\\ \overline s=-2(1-i)y
\end{array}$
        & 
         $\begin{array}{c}
ts\\ \overline s=(1-i)y^3
\end{array}$  \\  \hline  

\end{tabular}

\bigskip
\bigskip

\begin{tabular}{|c|c|c|c|c|}
\hline
     &     $l_{21}$                               & $l_{22}$ &$l_{23}$&$l_{24}$   \\  \hline \hline
$G$ 
 &    $\begin{array}{c}
t^3s\\ \overline s=-1
\end{array}$     
 &
  $\begin{array}{c}
t^3s\\ \overline s=-1
\end{array}$
  &
    $\begin{array}{c}
t^3s\\ \overline s=-1
\end{array}$
    & 
     $\begin{array}{c}
t^3s\\ \overline s=-1
\end{array}$
      \\  \hline

\end{tabular}

\end{center}

\end{document}